\tikzstyle{vertex}=[circle, draw, inner sep=0pt, minimum size=6pt]
\def\altdb{\vadjust{\vbox to 0pt{\vss\hbox{\kern \hsize
\quad{\dbend}}\kern\baselineskip\kern-10pt}}}
\newcommand{\arxiv}[1]{\href{http://arxiv.org/abs/#1}{\tt arXiv:\nolinkurl{#1}}}
\newcommand{\googlebooks}[1]{(preview at \href{http://books.google.com/books?id=#1}{google books})}
\let\@@pmod\pmod
\DeclareRobustCommand{\pmod}{\@ifstar\@pmods\@@pmod}
\def\@pmods#1{\mkern4mu({\operator@font mod}\mkern 6mu#1)}
  \newcommand\subcat[1]{\underline{#1}}
\renewcommand\Vec{\operatorname{Vec }}
\newcommand\IVec{\operatorname{Vec }^{-}}
\newcommand\LVec{\operatorname{Vec }}
\newcommand\sVec{\operatorname{sVec }}
\newcommand\BrPic{\operatorname{BrPic}}
\newcommand\id{\operatorname{id}}
\newcommand\TenAut{\operatorname{Eq}}
\newcommand\Rep{\operatorname{Rep}}
\newcommand\Id{\operatorname{Id}}
\newcommand\Hom{\operatorname{Hom}}
\newcommand\ad{\operatorname{Ad}}
\newcommand\Z[1]{ \mathbb{Z}_{#1}}
\newcommand\cC{ \mathcal{C}}
\newcommand\cM{ \mathcal{M}}
\newcommand\cZ{ \mathcal{Z}}
\newcommand\cF{ \mathcal{F}}
\theoremstyle{plain}
\newtheorem{theorem}{Theorem}[section]
\newtheorem*{theorem*}{Theorem}
\newtheorem*{prop*}{Proposition}
\newtheorem{cor}[theorem]{Corollary}
\newtheorem{lemma}[theorem]{Lemma}
\newtheorem{prop}[theorem]{Proposition}
\newtheorem{rmk}[theorem]{Remark}
\theoremstyle{remark}
\theoremstyle{definition}
\newtheorem{dfn}[theorem]{Definition}
\newcommand\Inv{\operatorname{Inv}}
\numberwithin{equation}{section}
\DeclareRobustCommand*{\nicefrac}{\@UnitsNiceFrac}%
\title{Classifying fusion categories $\otimes$-generated by an object of small Frobenius-Perron dimension}
\author{Cain Edie-Michell}
\address{Cain Edie-Michell\\
Department of Mathematics, Vanderbilt University\\
Nashville\
USA}
\email{cain.edie-michell@vanderbilt.edu}
\begin{document}

\maketitle

\begin{abstract}
The goal of this paper is to classify fusion categories $\cC$ which are $\otimes$-generated by an object $X$ of Frobenius-Perron dimension less than 2, with the additional mild assumption that the adjoint subcategory of $\cC$ is $\otimes$-generated by the object $X\otimes X^*$. This classification has recently become accessible due to a result of Morrison and Snyder, showing that any such category must be a cyclic extension of a category of adjoint $ADE$ type. Our main tools in this classification are the results of \cite{MR2677836}, classifying cyclic extensions of a given category in terms of data computed from the Brauer-Picard group, and Drinfeld centre of that category, and the results of \cite{MR3808052} which compute the Brauer-Picard group and Drinfeld centres of the categories of adjoint $ADE$ type.
 
Our classification includes the expected categories, constructed from cyclic groups and the categories of $ADE$ type. More interestingly we have categories in our classification that are non-trivial de-equivariantizations of these expected categories. Most interesting of all, our classification includes three infinite families constructed from the exceptional quantum subgroups $\mathcal{E}_4$ of $\cC( \mathfrak{sl}_4, 4)$, and $\mathcal{E}_{16,6}$ of $\cC( \mathfrak{sl}_2, 16)\boxtimes\cC( \mathfrak{sl}_3,6)$.
\end{abstract}

\section{Introduction}\label{sec:intro}
Fusion categories are a natural generalization of the representation category of a finite group, where we now allow the tensor product to be non-commutative. In this sense one can think of the program to classify fusion categories as the natural successor to the program to classify finite groups. While the classification of finite simple groups has been completed, the classification of fusion categories is still far from complete. Currently there are not even conjectures for a classification statement of all fusion categories. However, it seems reasonable to expect the existence of several truly exotic fusion categories, analogous to the situation with finite simple groups.

As a complete classification of fusion categories is hopelessly out of reach with current techniques, current research into the classification of fusion categories focuses on classifying ``small'' fusion categories, where small can have a variety of different meanings. Examples of such partial classifications can be found in \cite{1309.4822} where a classification of pivotal fusion categories with exactly three simple objects is given, or in \cite{MR3624901} where a classification of pivotal fusion categories with restrictions on the size of certain hom spaces is found.

Besides being interesting purely for their rich algebraic structure, fusion categories are important due to their relationship with several other areas of mathematics and physics. More precisely fusion categories provide a unifying framework for operator algebras, representation theory, and quantum field theory. Examples of fusion categories in these subjects appear as; the even part of a finite depth subfactor, the category of level $k$ integrable representations of an affine Lie algebra, and the value of a point in a fully extended $2+1$ dimensional topological quantum field theory. Thus partial classification results for fusion categories have broad applications to these subjects.

This paper will add another partial classification result to the literature. For us a ``small'' category will mean one $\otimes$-generated by an object of small Frobenius-Perron dimension. This notion of small is not new, and can be traced back to the earliest days of subfactor theory. Attempts to partially classify such small fusion categories have proven particularly successful in constructing exotic examples, such as the extended Haagerup fusion category \cite{MR2979509} and Izumi's quadratic categories \cite{MR1832764}. These examples remain the only known fusion categories not yet shown to be related to finite or quantum groups. One of the motivations behind this paper was to find new exotic examples appearing in our partial classification. Instead we find that every category appearing in our classification can be directly constructed from finite or quantum groups, though sometimes in very interesting and non-trivial ways! Thus our main Theorem provides further evidence that exotic fusion categories are indeed very rare objects.

The main Theorem of this paper is a generalization of two existing partial classifications. The first is the $ADET$ classification of unitary fusion categories $\otimes$-generated by a self-dual object of dimension less than 2. This result is closely related to the famous classification of subfactors of index less than 4 \cite{MR1193933,MR1145672,MR1313457,MR1929335,MR1308617, MR1617550}. The second is the classification of braided fusion categories $\otimes$-generated by an object of Frobenius-Perron dimension less than 2 \cite{MR1239440}. Our result vastly generalises both of these results. We classify fusion categories $\cC$ which are $\otimes$-generated by an object $X$ of Frobenius-Perron dimension less than 2, with the assumption that the adjoint subcategory of $\cC$ is $\otimes$-generated by the object $X\otimes X^*$.

While the assumption that the adjoint subcategory of $\cC$ is $\otimes$-generated by the object $X\otimes X^*$ may seem excessively restrictive, it is actually quite a mild assumption. It generalises the condition that $\otimes$-generating object $X$ be self-dual, and even generalises the weaker condition of $X$ commuting with its dual (which we have if $\cC$ has a braiding, or if the fusion ring is commutative). In fact, to the authors best knowledge, the only known examples of fusion categories not satisfying this assumption are the wreath product categories found in \cite{1904.08909}.

\begin{theorem}\label{thm:main}
Let $\cC$ be a fusion category $\otimes$-generated an object of Frobenius-Perron dimension less than 2, such that $\ad(\cC) = \langle X \otimes X^* \rangle$. Then, up to twisting the associator of $\cC$ by an element of $H^3(\text{Grading Group}, \mathbb{C}^\times)$, the category $\cC$ is monoidally equivalent to one of the following:
\begin{center}
  \begin{longtable}{l | c c  }
    	\toprule
			Category        					         									  & Parameterisations									 & Grading group     \\
	\midrule
			$\Vec( \Z{M})$	 					     		        					  	  & 				     							           & $\Z{M}$ \\[.35cm]
			$\ad(A_{2N}^{(n)}) \boxtimes \Vec( \Z{M})$   	   								 &  $n \in  \Z{2N+1}^\times / \{\pm 1\}$ 					         & $\Z{M}$ \\[.35cm]
		 	$\subcat{A_{2N+1}^{(n)}\boxtimes \Vec( \Z{2M} )  }$   								 &  $n \in  \Z{2N+2}^\times / \{\pm 1\}$					         & $\Z{2M}$ \\[.35cm]
	   		$\subcat{A_{4N+1}^{(n)}\boxtimes \Vec( \Z{4M} )  }_{ \langle f^{(4N)} \boxtimes 2M  \rangle    }$      &   $n \in  \Z{4N+2}^\times / \{\pm 1\}$			         & $\Z{2M}$ \\[.35cm]
	   		$\subcat{A_{4N+3}^{(n)}\boxtimes \IVec( \Z{4M} )  }_{ \langle f^{(4N+2)} \boxtimes 2M  \rangle    }$      &   $n \in  \Z{4N+4}^\times / \{\pm 1\}$			         & $\Z{2M}$ \\[.35cm]
	   		$\subcat{A_{3}^{(1)}\boxtimes \IVec( \Z{8M} )  }_{ \langle f^{(2)} \boxtimes 4M  \rangle    }$     		 & 											& $\Z{4M}$ \\[.35cm]
	   		$\subcat{D_{2N}^{(n,\pm)}\boxtimes \Vec( \Z{2M} )  }$		            					& $n \in  \Z{4N-2}^\times / \{\pm 1\}$			       & $\Z{2M}$ \\[.35cm]
	   		$\subcat{D_{4}^{(1,\pm)}\boxtimes \Vec( \Z{18M} )  }_{ \langle P \boxtimes 6M \rangle    }$     	   &  											         & $\Z{6M}$ \\[.35cm]
	   		$\subcat{E_{6}^{(n,\pm)}\boxtimes \Vec( \Z{2M} )}  $		         					    &  $n \in  \Z{12}^\times / \{\pm 1\}$						         & $\Z{2M}$ \\[.35cm]
	   		$\subcat{E_{6}^{(n,\pm)}\boxtimes \IVec( \Z{4M} )}_{ \langle Z \boxtimes 2M  \rangle    } $     	   &  $n \in  \Z{12}^\times / \{\pm 1\}$						         & $\Z{2M}$ \\[.35cm]
	   		$\subcat{E_{8}^{(n,\pm)}\boxtimes \Vec( \Z{2M} )}$    			 					  &  $n \in  \Z{30}^\times / \{\pm 1\}$						         & $\Z{2M}$ \\[.35cm]
	   		$\subcat{\mathcal{E}_4^{(n)}\boxtimes \Vec( \Z{4M} )}$    							  &  $n \in  \Z{8}^\times / \{\pm 1\}$						         & $\Z{4M}$ \\[.35cm]
	   		$\subcat{\mathcal{E}_4^{(n)}\boxtimes \IVec( \Z{16M} )}_{ \langle \mathbf{4} \boxtimes 8M  \rangle    } $    &  $n \in  \Z{8}^\times / \{\pm 1\}$						         & $\Z{8M}$ \\[.35cm]
	   		$\subcat{\mathcal{E}_{16,6}^{(n,\pm)}\boxtimes \Vec( \Z{6M} )} $         					 &  $n \in  \Z{18}^\times / \{\pm 1\}$						         & $\Z{6M}$ \\[.35cm]
    	\bottomrule
    \end{longtable} 
    \end{center}

\end{theorem}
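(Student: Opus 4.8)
The plan is to play three ingredients off each other: the Morrison--Snyder reduction quoted in the introduction, the classification of graded extensions from \cite{MR2677836}, and the Brauer--Picard and Drinfeld-centre computations of \cite{MR3808052}. To begin, let $X$ be the $K$-normal generator, $\FPdim(X)<2$. Since $\cC$ is $\otimes$-generated by $X$, every simple object is a summand of a word in $X$ and $X^{*}$, so the universal grading group $U(\cC)$ is generated by $\deg(X)$ and is therefore cyclic, say $U(\cC)\cong\Z{N}$; moreover the degree-$0$ component is the adjoint subcategory $\cC_{\mathrm{ad}}=\langle X\otimes X^{*}\rangle$. By the theorem of Morrison and Snyder, $\cC_{\mathrm{ad}}$ is one of the categories of adjoint $ADE$ type --- $\Vec$, one of the $\lie{sl}_{2}$ adjoint categories $\ad(A_{k})$, $\ad(D_{k})$, $\ad(E_{6})$, $\ad(E_{8})$ (possibly carrying a Galois-twisted associator), or one of the exceptional categories $\mathcal{E}_{4}\subset\cC(\lie{sl}_{4},4)$ and $\mathcal{E}_{16,6}\subset\cC(\lie{sl}_{2},16)\boxtimes\cC(\lie{sl}_{3},6)$. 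Thus $\cC$ is a faithfully $\Z{N}$-graded extension of one of a short, explicit list of base categories $\mathcal{D}$.

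Next I would, for each such $\mathcal{D}$, extract from \cite{MR3808052} the group $\BrPic(\mathcal{D})$, the group $\Inv(\cZ(\mathcal{D}))$ of invertible objects of the Drinfeld centre, and the action of the former on the latter, and then apply the extension theory of \cite{MR2677836}. Up to twisting the associator by a class in $H^{3}(\Z{N},\CC^{\times})$ --- precisely the equivalence relation in the statement --- a faithfully $\Z{N}$-graded extension of $\mathcal{D}$ amounts to: a homomorphism $c\colon\Z{N}\to\BrPic(\mathcal{D})$ for which the obstruction $o_{3}(c)\in H^{3}(\Z{N},\Inv(\cZ(\mathcal{D})))$ vanishes (the $\Z{N}$-action being $c$ followed by the $\BrPic$-action), together with a lift, the lifts forming a torsor over $H^{2}(\Z{N},\Inv(\cZ(\mathcal{D})))$ (the final obstruction lying in $H^{4}(\Z{N},\CC^{\times})$, which vanishes as $\Z{N}$ is cyclic). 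Since $\Z{N}$ is cyclic, Tate periodicity makes these groups explicit: with $\sigma$ the action of a generator and $\mathrm{Nm}=1+\sigma+\dots+\sigma^{N-1}$ one has $H^{2}\cong\ker(\sigma-1)/\operatorname{im}(\mathrm{Nm})$ and $H^{3}\cong\ker(\mathrm{Nm})/\operatorname{im}(\sigma-1)$, so for each $\mathcal{D}$ the enumeration of extensions is a finite computation.

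The essential filtering step is then to keep only those extensions that are genuinely $\otimes$-generated by a $K$-normal object of dimension $<2$. Each graded component $\cC_{g}$ is an invertible $\mathcal{D}$-bimodule category, hence $\FPdim(\cC_{g})=\FPdim(\mathcal{D})$; the (simple) generator $X$ must lie in a component $\cC_{g}$ with $g$ of order $N$, must satisfy $\FPdim(X)<2$, and must have $X\otimes X^{*}$ generating $\mathcal{D}$. Using the explicit description of the invertible bimodule categories and their simple objects from \cite{MR3808052}, I would determine exactly which $c$ and which torsor elements admit such an $X$, and when that $X$ can be arranged to be $K$-normal; this eliminates the bulk of the candidates, leaving a handful of families indexed by the free parameter $M$. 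Finally I would identify each survivor with a model in the table: the homomorphisms $c$ into the pointed part of $\BrPic(\mathcal{D})$ with trivial torsor element give the expected categories $\ad(ADE)\boxtimes\Vec(\Z{M})$ and their $\mathcal{E}_{4},\mathcal{E}_{16,6}$ analogues, while nontrivial torsor elements and the more interesting elements of $\BrPic(\mathcal{D})$ produce the de-equivariantizations $\subcat{\cdots}_{\langle\cdots\rangle}$, the $\IVec$ decoration recording the associator one must place on the pointed factor to turn the chosen order-$2$ (or order-$3$, order-$6$) invertible object into a boson admitting de-equivariantization, and the Galois parameters $n\in(\Z{k})^{\times}/\{\pm1\}$ recording the Galois twist of the underlying $\cC(\lie{sl}_{2},\cdot)$.

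I expect the hard part to be the last two tasks. Verifying $o_{3}(c)=0$ requires a concrete grip on the $\BrPic(\mathcal{D})$-action on $\Inv(\cZ(\mathcal{D}))$, which is delicate for the exceptional bases $\mathcal{E}_{4}$ and $\mathcal{E}_{16,6}$; tracking precisely which abstract extension data survives the Frobenius--Perron constraint is bookkeeping-heavy and must be done base by base; and producing honest monoidal equivalences between the surviving extensions and the listed models --- recognising the de-equivariantization presentations and pinning down the parameterisations exactly --- is where the real work lies.
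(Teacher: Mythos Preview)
Your overall architecture is right --- Morrison--Snyder reduction, then case-by-case application of the Etingof--Nikshych--Ostrik extension theory using the data from \cite{MR3808052} --- and matches the paper's strategy. But there is a genuine gap in your understanding of the Morrison--Snyder step that would derail the argument.

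The theorem of Morrison and Snyder (Theorem~\ref{thm:ADEext} in the paper) says that $\cC_{\mathrm{ad}}$ is of adjoint $ADE$ type: $\ad(A_N)$, $\ad(D_{2N})$, $\ad(E_6)$, or $\ad(E_8)$. It does \emph{not} say $\cC_{\mathrm{ad}}$ could be $\mathcal{E}_4$ or $\mathcal{E}_{16,6}$. Those two categories are not base categories in the classification; they are \emph{outputs}. Concretely, $\mathcal{E}_4$ is a $\Z{4}$-graded extension of $\ad(A_7)$ (Lemma~\ref{lem:a7ext}), and $\mathcal{E}_{16,6}$ is a $\Z{6}$-graded extension of $\ad(D_{10})$ (Lemma~\ref{lem:D10ext}). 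So you never need $\BrPic(\mathcal{E}_4)$ or $\Inv(\cZ(\mathcal{E}_{16,6}))$; what you need is to notice that $\BrPic(\ad(A_7))\cong D_{2\cdot 4}$ and $\BrPic(\ad(D_{10}))\cong S_3\times S_3$ have order-$4$ and order-$6$ elements, and then to \emph{construct} extensions realising those homomorphisms. The paper builds them as categories of modules over commutative algebras coming from conformal embeddings ($\mathfrak{sl}(4)_4\subset\mathfrak{spin}(15)_1$ and $\mathfrak{sl}(2)_{16}\oplus\mathfrak{sl}(3)_6\subset(E_8)_1$), and then proves they sit over the correct adjoint category. This construction-and-identification step is where the exotic families in the table come from, and your proposal has it backwards.

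A second, smaller point: you plan to verify $o_3(c)=0$ and then enumerate the $H^2$ torsor abstractly. The paper never computes $o_3$. Instead, for each admissible homomorphism $c$ it computes $|H^2(\Z{M},\Inv(\cZ(\mathcal{D})))|$ as an \emph{upper bound} on the number of extensions (up to associator twist), and then exhibits that many pairwise-inequivalent explicit categories --- built from Deligne products, the $\subcat{\,\cdot\,}$ construction, and de-equivariantisations --- realising $c$. Existence of even one such category shows $o_3(c)=0$ for free, and hitting the upper bound shows the list is complete. The substantive work is therefore (i) ruling out homomorphisms $c$ whose image bimodules contain no simple of Frobenius--Perron dimension $<2$ that could generate, and (ii) building enough explicit models and distinguishing them (often by fusion rules or groups of invertibles). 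The $K$-normality is not used as a filter in step (i); it is only checked at the very end, where one verifies that every surviving category is in fact generated by a $1$- or $2$-normal object.
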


In order to make the above Theorem as self-contained as possible we provide a quick index of where to find relevant information on the categories and constructions used in the statement of this Theorem.
 
\begin{table}[h!]
\centering 
    \begin{tabular}{l | l}
	\midrule
			Twisting the associator by an element of $H^3(\text{grading group},\mathbb{C}^\times)$ & Definition~\ref{def:twisting}\\[.1cm]
 		       Categories of $ADE$ type  & Subsection~\ref{sub:ADE}	\\[.1cm]
 		       $\IVec(\Z{2M})$  & Definition~\ref{def:2Mtw}	\\[.1cm]
 		     $\subcat{\cC}$  & Definition~\ref{def:subcat}	\\[.1cm]
 		     $\cC_{\Rep(G)}$  & Definition~\ref{def:deeq}	\\[.1cm]
		     $\mathcal{E}_4^{(n)}$ & Definition~\ref{def:e4}\\[.1cm]
		    $\mathcal{E}_{16,6}^{(n,+)}$ & Definition~\ref{def:e166}\\[.1cm]
		  Fusion rules for $\mathcal{E}^{(n)}_4$ and $\mathcal{E}^{(n,\pm)}_{16,6}$    & Appendix~\ref{app:rules}  \\
    	\bottomrule
    \end{tabular}
\end{table}

We roughly sum up our classification as follows. Any fusion category appearing in our classification is directly constructed from a cyclic pointed category, a category of $ADE$ type, or from one of the quantum subgroups $\mathcal{E}_4$ or $\mathcal{E}_{16,6}$.

From an operator algebraic perspective it is interesting to know which of the categories in Theorem~\ref{thm:main} are unitary. This was worked out in the authors Thesis \cite{cainthesis}, where additional details can be found. In the unitary setting, we lose the choice of $n$, which must always be equal to $1$. An interesting application of the unitary version of Theorem~\ref{thm:main} is that when paired with Popa's embedding theorem, we get a classification of certain bimodules of the hyperfinite type $II_1$ factor, with Murray-Von Neumann dimension less than 2.

The reader may find it unsatisfying that we only classify categories up to twisting the associator by some $3$-cocycle. To ease the readers mind, we direct them to Lemma~\ref{lem:twtwt}, which provides an explicit recipe for constructing a cocycle twist of a fusion category.

It is also important to note that while each of the fusion categories in Theorem~\ref{thm:main} are monoidally inequivalent, even up to twisting the associator by a $3$-cocyle, it is not true that for a fixed category in our classification result that all $3$-cocyle twists will be monoidally inequivalent. For example, the 5 cocycle twists of $\Vec(\Z{5})$ only give 3 monoidally inequivalent fusion categories. While we have tried hard to refine our result, such a problem proved far beyond the techniques developed in \cite{1711.00645} toward this goal.

The structure of this paper is as follows: Section~\ref{sec:prelim} contains the necessary background material to understand the statement of Theorem~\ref{thm:main}, along with the tools and machinery to prove this Theorem. In particular we communicate a Theorem of Morrison and Snyder, showing that any fusion category $\cC$, which is $\otimes$-generated by an object of Frobenius-Perron dimension less than 2, such that $\ad(\cC) = \langle X\otimes X^* \rangle$, must be a cyclic extension of a category of adjoint $ADE$ type. With this Theorem in mind, we spend Section~\ref{sec:class} classifying such extensions of the categories of adjoint $ADE$ type. Key for these computations were the Authors results \cite{MR3808052} computing the Drinfeld centres, and Brauer-Picard groups of the categories of adjoint $ADE$ type. Section~\ref{sec:mainproof} ties the results of Section~\ref{sec:class} together in order to give a proof of Theorem~\ref{thm:main}. In an appendix to this paper, we compute the fusion rules for the categories $\mathcal{E}^{(n)}_4$ and $\mathcal{E}^{(n,\pm)}_{16,6}$.  While these categories come from well known conformal inclusions, the fusion rules for these categories have not been computed in the literature before. We exploit the fact that these categories are graded extensions of categories which we know the fusion rules for, to compute the fusion rules for the entire category.

A natural generalization of Theorem~\ref{thm:main} is to increase the bound on the Frobenius-Perron dimension of the $\otimes$-generating object. If we assume a unitary condition on our fusion categories, then it seems feasible to increase the bound the dimension of the $\otimes$-generating object from $4$ up to $\sqrt{5 + \frac{1}{4}}$. Such a category would have to be a cyclic extension of the even part of a finite depth subfactor of index less than $5 + \frac{1}{4}$, which have been completely classified in \cite{1509.00038}. Furthermore the Brauer-Picard groups of the even parts of many of these subfactors have been computed by Grossman and Snyder \cite{MR3449240,MR2909758}. With the information of the Brauer-Picard groups given, the techniques of this paper should directly generalize to prove such a classification.

Another interesting future line of research would be to generalise Theorem~\ref{thm:main} by removing the condition that $\ad(\cC)  =\langle X\otimes X^* \rangle$, to obtain a complete classification of fusion categories $\otimes$-generated by an object of Frobenius-Perron dimension less than $2$. Partial progress towards this goal has been made in \cite{1904.08909}, where a classification of pivotal fusion categories $\otimes$-generated by an object of dimension $\frac{1 +\sqrt{5}}{2}$ was given. Given the appearance of the interesting categories $\mathcal{E}_4$ or $\mathcal{E}_{16,6}$ in our weaker classification, we anticipate even more interesting categories appearing in such a generalised classification.

\subsection*{Acknowledgments}
The author is grateful to many people for their help with this paper. This paper is an adaptation of the authors PhD Thesis, supervised by Scott Morrison who provided invaluable support. I would also like to thank Pinhas Grossman, Corey Jones, and Noah Synder for interesting discussions, and Marcel Bischoff, Victor Ostrik, and Eric Rowell for their comments on the Thesis version of this paper.

This research was supported by an Australian Government Research Training Program (RTP) Scholarship. The author was partially supported by the Discovery Project 'Subfactors and symmetries' DP140100732 and 'Low dimensional categories' DP160103479.

\section{Preliminaries}\label{sec:prelim}
A \textit{fusion category} is a finite semisimple $\mathbb{C}$-linear rigid monoidal category with simple unit.

Let $X$ be an object in a fusion category $\cC$. We define the \textit{Frobenius-Perron dimension} of $X$ as the unique largest real eigenvalue of the matrix that represents tensoring the simple objects of $\cC$ with $X$. We have that the Frobenius-Perron dimension of an object $X$ is always greater than or equal to 1. Furthermore if the Frobenius-Perron dimension of an object $X$ is less than 2, then it can only take values in the countable set
\[  \left\{ 2\cos\left( \frac{\pi}{n}\right) : n \in \mathbb{N} \right\}  .\]
We $\cC$ is \textit{pivotal} if it is equipped with a monoidal equivalence
\[  \Id_{\cC} \to **.\] 
If $\cC$ is pivotal then we can define the \textit{categorical dimension} of a simple object $X$ as the trace of the identity morphism on $X$. Like the Frobenius-Perron dimension, the categorical dimension of $X$ is also an eigenvalue of the matrix that represents tensoring the simple objects of $\cC$ with $X$. However now the categorical dimension of $X$ can be close to 0, or even negative. 

We say a fusion category $\cC$ is $\otimes$-generated by an object $X$ if every object $Y\in \cC$ is a sub-object of some tensor power of $X$, or equivalently, if the fusion graph for tensoring by $X$ is connected. Given $X \in \cC$, we write $ \langle X \rangle$ for the fusion subcategory of $\cC$ $\otimes$-generated by $X$.

\subsection{De-equivariantization}
A key tool for constructing the categories in the main classification result of this paper is de-equivariantization. 

Let $\cC$ a fusion category, $\mathcal{D}$ a monoidal subcategory of $\cC$, and $\mathcal{E}$ a braided fusion category. We say \textit{$\mathcal{D}$ lifts to a copy of $\mathcal{E}$ in the centre of $\cC$} if there exists a fully faithful functor $\mathcal{F}: \mathcal{D} \to \cZ(\cC)$ such that $\mathcal{F}(\mathcal{D})$ is braided equivalent to $\mathcal{E}$, and the following diagram commutes 
\begin{center}
\begin{tikzcd}
 & \cZ(\cC) \arrow[d] \\
\mathcal{D} \arrow[r,hook]  \arrow[ru,hook] & \cC
\end{tikzcd}
\end{center}
where the functor $\cZ(\cC) \to \cC$ is the forgetful functor.

Let $G$ a finite group, and $\mathcal{D}$ be a monoidal subcategory of $\cC$ with a lift to a copy of $\Rep(G)$ in the centre. Then we can consider the function algebra object $A := \operatorname{Fun}(G)$ in $\mathcal{D}$. As $\mathcal{D}$ has a lift to a copy of $\Rep(G)$ in the centre of $\cC$, we have that the algebra $A \in \cC$ has a commutative half-braiding. Thus the category of $A$-modules in $\cC$ has the structure of a fusion category by \cite{MR2863377}. 
\begin{dfn}\label{def:deeq}
Let $\mathcal{D}$ be a monoidal subcategory of $\cC$ with a lift to a copy of $\Rep(G)$ in the centre. We define $\cC_{\mathcal{D}}$, the de-equivariantization of $\cC$ by $\mathcal{D}$, as the fusion category of $A$-modules in $\cC$.
\end{dfn}

The following categories provide examples of categories with a subcategory that lifts to a copy of $\operatorname{sVec}$ in the centre. While we can't directly de-equivariantize these categories, they will be useful in constructions that we can de-equivariantize.

\begin{dfn}\label{def:2Mtw}
We write $\IVec(\Z{2M})$ for the category with $\Vec(\Z{2M})$ fusion rules, and associator given by 
\[   \alpha_{n_1,n_2, n_3} := \begin{cases}
1, \quad \hspace{.75em} \text{ if } n_2 + n_3 < n_1,\\
e^{2i\pi\frac{n_1}{M}} \text{ if } n_2 + n_3 \geq n_1.
\end{cases}\]
\end{dfn}

\begin{lemma}\label{lem:liftyourself} 
\hspace{1em}
\begin{enumerate}
\item The subcategory $\langle M \rangle \subseteq \Vec(\Z{2M})$ lifts to the centre as a copy of $\Rep(\Z{2})$.
\item The subcategory $\langle M \rangle \subseteq \IVec(\Z{2M})$ lifts to the centre as a copy of $\operatorname{sVec}$.
\end{enumerate}
\end{lemma}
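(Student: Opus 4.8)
Both parts assert the existence of a fully faithful monoidal functor $\cF\colon\langle M\rangle\to\cZ(\cC)$, with $\cC=\Vec(\Z{2M})$ in part~(1) and $\cC=\IVec(\Z{2M})$ in part~(2), which splits the forgetful functor $\cZ(\cC)\to\cC$ and whose image is braided equivalent to $\Rep(\Z{2})$, respectively $\sVec$. In both cases the associator of $\cC$ restricts trivially to $\langle M\rangle$ (the relevant scalars are all $e^{2i\pi M/M}=1$), so $\langle M\rangle\cong\Vec(\Z{2})$, generated by the order-two object $M$. Since $\cC$ is pointed, giving such an $\cF$ is the same as equipping $M$ with a half-braiding $c_{M,-}$ so that the resulting object $(M,c_{M,-})\in\cZ(\cC)$ is simple, lies over $M$ under the forgetful functor, and squares to the unit; and because $\Rep(\Z{2})$ and $\sVec$ are exactly the two symmetric fusion categories with $\Z{2}$ fusion rules, distinguished by whether the self-braiding of the nontrivial object is $\id$ or $-\id$, parts~(1) and~(2) reduce to producing such a half-braiding with $c_{M,M}=\id$, respectively $c_{M,M}=-\id$.

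Part~(1) is immediate: $\Vec(\Z{2M})$ with its trivial associator carries the trivial symmetric braiding $c_{a,b}=\id_{a+b}$, hence a canonical braided section $\Vec(\Z{2M})\hookrightarrow\cZ(\Vec(\Z{2M}))$ of the forgetful functor. Restricting this section to $\langle M\rangle$ gives a lift whose image is a symmetric fusion category with $\Z{2}$ fusion rules in which the nontrivial object has self-braiding $\id$, that is, a copy of $\Rep(\Z{2})$.

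For part~(2) we construct the half-braiding directly. Write $\omega$ for the associator of $\IVec(\Z{2M})$; it represents $2\in H^{3}(\Z{2M},\CC^{\times})\cong\Z{2M}$. A half-braiding on $M$ is a family of scalars $\gamma(n)\in\CC^{\times}$ specifying $c_{M,n}\colon M\otimes n\to n\otimes M$ (here source and target are both the object $n+M$), and the hexagon identity for $c_{M,-}$ reads $\gamma(a)\gamma(b)\gamma(a+b)^{-1}=\tau_{M}(a,b)$, where $\tau_{M}(a,b)=\omega(M,a,b)\,\omega(a,b,M)\,\omega(a,M,b)^{-1}$. One computes $\tau_{M}\equiv1$, so the admissible $\gamma$ are precisely the characters of $\Z{2M}$. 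For such a $\gamma$ the tensor square $(M,\gamma)^{\otimes2}$ is supported on $0$ and carries the character $\gamma^{2}\cdot\mu$, where $\mu(a)=\omega(M,M,a)\,\omega(a,M,M)\,\omega(M,a,M)^{-1}$; a short computation gives $\mu(a)=e^{2i\pi a/M}$. Taking $\gamma(a):=e^{-i\pi a/M}$, which is a character of $\Z{2M}$ since $\gamma(2M)=1$, we obtain $\gamma^{2}\cdot\mu=1$, hence $(M,\gamma)^{\otimes2}\cong\mathbf 1$, while $c_{M,M}=\gamma(M)=e^{-i\pi}=-1$. Thus $M\mapsto(M,\gamma)$ is the required lift, its image being a symmetric fusion category with $\Z{2}$ fusion rules whose nontrivial object is a fermion, i.e.\ a copy of $\sVec$.

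The real content is the pair of computations in part~(2): checking $\tau_{M}\equiv1$, and evaluating $\mu$, against the explicit cocycle of Definition~\ref{def:2Mtw}; and, the more delicate point, fixing the conventions for the associativity constraint, the tensor product, and the braiding of $\cZ(\cC)$ consistently enough that the self-braiding comes out $-\id$ in part~(2) and $\id$ in part~(1) rather than with the opposite sign. (If a different sign convention makes $\mu(a)=e^{-2i\pi a/M}$, one simply uses $\gamma(a)=e^{i\pi a/M}$, still with $\gamma(M)=-1$.) Alternatively one can avoid half-braidings entirely: $\cZ(\IVec(\Z{2M}))$ is a pointed modular category, hence determined by an explicit metric group $(G,q)$, and it then suffices to locate, among the objects lying over $M\in\Z{2M}$, an element of order two on which $q$ takes the value $-1$.
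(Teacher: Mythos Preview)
Your proof is correct but takes a different route from the paper's. The paper avoids any direct cocycle computation: it invokes the classification of pointed braided fusion categories by pairs $(G,t)$ with $t$ a quadratic form, picks the two forms $t_1(m)=1$ and $t_q(m)=q^{m^2}$ with $q=e^{2\pi i/(2M^2)}$ on $\Z{2M}$, checks $t_1(M)=1$ and $t_q(M)=-1$, and then uses the canonical braided embedding $\cC\hookrightarrow\cZ(\cC)$ together with an identification (via \cite{MR1734419}) of the underlying fusion categories of these two braided categories with $\Vec(\Z{2M})$ and $\IVec(\Z{2M})$. So the paper puts a braiding on the \emph{whole} category and restricts, whereas you construct the half-braiding on the single object $M$ by hand. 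Your approach is more explicit and self-contained (no external reference needed, and the reader sees exactly where the $-1$ comes from), at the cost of the two cocycle computations $\tau_M\equiv 1$ and $\mu(a)=e^{2\pi i a/M}$ which you state but do not display; the paper's approach is shorter and more conceptual but relies on the quadratic-form classification and on matching the explicit associator of $\IVec(\Z{2M})$ with the one coming from $t_q$. Your closing remark about locating a fermion in the metric group of $\cZ(\IVec(\Z{2M}))$ is in the same spirit as the paper's argument, though the paper sidesteps even that by braiding the category itself rather than computing its center.
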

\begin{proof}
Recall that pointed braided fusion categories are classified by pairs $(G,t)$, where $G$ is an abelian group, and $t:G\to \mathbb{C}^\times$ is a quadratic form. In the case $G =\Z{2M}$ we have that the quadratic form $t$ is completely determined by $q$ a $4M^2$-th root of unity, with 
\[  t_q(m) := q^{m^2}.\]
In particular we have the two pointed braided fusion categories corresponding to the $4M^2$-th roots of unity 1 and $e^{2\pi i \frac{1}{2M^2}}$. The twist of the object $M$ in each of these categories is given by $t_1(M) = 1$ and $t_{e^{2\pi i \frac{1}{2M^2}}}(M)= -1$ respectively. Thus we get braided inclusions $\Rep(\Z{2}) \to (\Z{2M} , t_1)$ and $\operatorname{sVec} \to (\Z{2M} ,t_{e^{2\pi i \frac{1}{2M^2}}})$. A braided category $\cC$ has a canonical embedding $\cC \to \cZ(\cC)$, hence the subcategory $\langle M \rangle \subseteq (\Z{2M} , t_1)$ lifts to the centre as a copy of $\Rep(\Z{2})$, and the subcategory $\langle M \rangle \subseteq (\Z{2M} ,t_{e^{2\pi i \frac{1}{2M^2}}})$ lifts to the centre as a copy of $\operatorname{sVec}$. We use \cite[Section 2.5.2]{MR1734419} to see that the category $(\Z{2M} , t_1)$ is monoidally equivalent to $\Vec(\Z{2M})$, and the category $ (\Z{2M} ,t_{e^{2\pi i \frac{1}{2M^2}}})$ is monoidally equivalent to the category $\IVec(\Z{2M})$.
\end{proof}

\subsection{Bimodule categories}
Here we define module categories, bimodules categories, and the Brauer-Picard group.
\begin{dfn}\cite{MR1976459}
A left module category $\cM$ over a fusion category $\cC$ is a semi-simple $\mathbb{C}$-linear category along with a functor $ \otimes : \cC\times \cM \to \cM$, and natural isomorphisms $(X\otimes Y) \otimes M \to X \otimes (Y \otimes M)$ satisfying a straightforward pentagon equation. 
\end{dfn}

A slight generalisation of a module category over $\cC$, is the notion of a bimodule category over $\cC$. This is a natural generalisation where now the category $\cC$ can act on both the left and right, and there is the additional structure of an isomorphism relating the left and right actions (see \cite{MR2678824} for an explicit definition). 

Given a $\cC$ bimodule and a monoidal auto-equivalence $\cF \in \TenAut(\cC)$, one can construct a new bimodule by twisting the action on one side.
\begin{dfn}\label{dfn:twist}
Let $\cM$ be a $\cC$-bimodule category, and $\cF \in \TenAut(\cC)$ a monoidal auto-equivalence. We define a new bimodule $_{\cF}\cM$, which is equal to $\cM$ as a right module category, and with left action given by 
\[    X \triangleright_{_{\cF}\cM} m   :=  \cF(X) \triangleright_M m .\]        
The structure morphisms for $_{\cF}\cM$ consist of a combination of the structure morphisms for $\cM$, and the tensorator of $\cF$.   
\end{dfn}

Given two $\cC$-bimodules we can define their relative tensor product over $\cC$, which is a another $\cC$-bimodule category. The details on this relative tensor product can be found in \cite{MR2677836}, however these detail are unnecessary to follow this paper. Using this relative tensor product of bimodules we can define the Brauer-Picard group of $\cC$.
\begin{dfn}
The Brauer-Picard group of $\cC$, which we denote $\BrPic(\cC)$, is the group of invertible $\cC$-bimodules with respect to the relative tensor product.
\end{dfn}

\subsection*{Graded categories}
Let $\cC$ a fusion category and $G$ a finite group. We say $\cC$ is a \textit{$G$-graded fusion category} if we can write
\[ \cC \simeq \bigoplus_G \cC_g,\]
with $\cC_g$ non-trivial abelian subcategories, such that the tensor product of $\cC$ restricted to $\cC_g \times \cC_h$ has image contained in $\cC_{gh}$.

We say $\mathcal{D}$ is \textit{$G$-graded extension} of a fusion category $\cC$ if $\mathcal{D}$ is a $G$-graded fusion category whose trivially graded piece is $\cC$. We define the adjoint subcategory of a fusion category $\cC$ as
\[ \ad(\cC) := \langle X \otimes X^*: X\in \operatorname{Irr}(\cC) \rangle.\] 
Every fusion category $\cC$ is a $G$-graded extension of $\ad(\cC)$ for some finite group $G$.

Let 
\[  \mathcal{D}_1  := \bigoplus_G \mathcal{D}_{1,g} \text{ and }  \mathcal{D}_2  := \bigoplus_G \mathcal{D}_{2,g} \]
be two $G$-graded extensions of $\cC$, so $\mathcal{D}_{1,e} = \cC = \mathcal{D}_{2,e}$. An equivalence of extensions is a monoidal equivalence $\mathcal{F}:\mathcal{D}_1 \to \mathcal{D}_2$, such that $\mathcal{F}_{\cC}$ is the identity, and $\mathcal{F}( \mathcal{D}_{1,g} ) \subseteq \mathcal{D}_{2,g}$. Equivalence of extensions is, in general, a stronger condition than plain monoidal equivalence. We will see several examples of this later in the paper.

Given a $G$-graded fusion category $ \cC \simeq \bigoplus_G \cC_g$, and an 3-cocyle $\omega \in H^3(G, \mathbb{C}^\times)$, we can construct a new fusion category $\cC^\omega$ with the same objects and fusion rules, but whose associator is given by
\[   \alpha^{\cC^\omega}_{X_f,Y_g,Z_h} := \omega(f,g,h)  \alpha^{\cC}_{X_f,Y_g,Z_h} .\]
\begin{dfn}\label{def:twisting}
We say two $G$-graded categories $\cC_1$ and $\cC_2$ are equivalent, up to twisting the associator, if there exists a 3-cocycle $\omega \in H^3(G, \mathbb{C}^\times)$ such that there exists a monoidal equivalence
\[  \cC_1 \simeq \cC_2^\omega.\]
\end{dfn}
We remark that the action of $H^3(G, \mathbb{C}^\times)$ on the set of $G$-graded categories is not generally free. For example if we take the $\Z{5}$-graded category $\Vec(\Z{5})$, and $\omega$ any non-trivial element of $H^3(\Z{5}, \mathbb{C}^\times)$, then the categories $\Vec(\Z{5})^\omega$ and $\Vec(\Z{5})^{\omega^{-1}}$ are monoidally equivalent (though inequivalent as $\Z{5}$-graded extensions of $\Vec$).

In this paper we will be mainly concerned with $\Z{M}$-graded categories. Such categories have many nice properties and constructions.

\begin{dfn}\label{def:subcat}
Let $\cC$ a $\Z{M_1}\times \Z{M_2}$-graded fusion category. We define $\subcat{\cC}$ as the subcategory $\otimes$-generated by the objects in $\cC_{1\times1}$.
\end{dfn}
It is straightforward to check that $\ad(\cC) = \ad(\subcat{\cC})$, thus $\cC$ and $\subcat{\cC}$ are extensions of the same category, though with possibly different grading groups.

This construction allows us to construct new cyclic-graded extensions of $\cC$ from old cyclic-graded extensions.
\begin{lemma}
Let $\mathcal{D}$ a $\Z{M}$-graded extension of $\ad(\mathcal{D})$. Then the category 
\[    \subcat{\mathcal{D} \boxtimes \Vec(\Z{N})} \]
is a $\Z{L}$-graded extension of $\ad(\mathcal{D})$, where $L$ is the least common multiple of $M$ and $N$.
\end{lemma}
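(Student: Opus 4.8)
The plan is to prove this by tracking gradings and applying the remark following Definition~\ref{def:subcat}. Equip $\mathcal{D} \boxtimes \Vec(\Z{N})$ with the product of the given $\Z{M}$-grading on $\mathcal{D}$ and the standard $\Z{N}$-grading on $\Vec(\Z{N})$, so that its $(i,j)$-homogeneous component is $\mathcal{D}_i \boxtimes \Vec(\Z{N})_j$. By Definition~\ref{def:subcat}, $\subcat{\mathcal{D} \boxtimes \Vec(\Z{N})}$ is the fusion subcategory $\otimes$-generated by $\mathcal{D}_1 \boxtimes \Vec(\Z{N})_1$, so every object of it is a subobject of a tensor power of an object of degree $(1,1)$, and hence is supported on the cyclic subgroup $\langle (1,1) \rangle$ of $\Z{M} \times \Z{N}$. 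Since $(1,1)$ has order $L := \operatorname{lcm}(M,N)$, this subgroup is isomorphic to $\Z{L}$, and $\subcat{\mathcal{D} \boxtimes \Vec(\Z{N})}$ thereby inherits a $\Z{L}$-grading.

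Next I would check that this $\Z{L}$-grading has full support. Fix a nonzero object $X \in \mathcal{D}_1$ (which exists since graded components are non-trivial) and let $g$ be the invertible object of $\Vec(\Z{N})$ of degree $1$. For each $k \in \{0, \dots, L-1\}$ the object $(X \boxtimes g)^{\otimes k}$ lies in the generating subcategory, has $\Z{M} \times \Z{N}$-degree $(k,k)$, hence $\Z{L}$-degree $k$, and is nonzero because Frobenius-Perron dimension is multiplicative and positive. Thus all $L$ homogeneous components of $\subcat{\mathcal{D} \boxtimes \Vec(\Z{N})}$ are nonzero.

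It remains to identify the trivially graded component. An object of $\subcat{\mathcal{D} \boxtimes \Vec(\Z{N})}$ has $\Z{L}$-degree $0$ exactly when its $\Z{M} \times \Z{N}$-degree is $(0,0)$, i.e.\ when it lies in $\mathcal{D}_0 \boxtimes \Vec(\Z{N})_0 = \ad(\mathcal{D}) \boxtimes \Vec \cong \ad(\mathcal{D})$, so the trivial component is contained in $\ad(\mathcal{D})$. Conversely, by the remark following Definition~\ref{def:subcat} we have $\ad\!\left(\subcat{\mathcal{D} \boxtimes \Vec(\Z{N})}\right) = \ad(\mathcal{D} \boxtimes \Vec(\Z{N}))$, and since $\ad$ commutes with Deligne products and $\ad(\Vec(\Z{N})) = \Vec$ (all objects of $\Vec(\Z{N})$ being invertible), this equals $\ad(\mathcal{D}) \boxtimes \Vec \cong \ad(\mathcal{D})$. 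As the adjoint subcategory is always contained in the trivially graded component of any grading, the reverse inclusion $\ad(\mathcal{D}) \subseteq \subcat{\mathcal{D} \boxtimes \Vec(\Z{N})}_0$ follows, and the two inclusions give equality. Together with the previous paragraph this exhibits $\subcat{\mathcal{D} \boxtimes \Vec(\Z{N})}$ as a $\Z{L}$-graded extension of $\ad(\mathcal{D})$. No step is a genuine obstacle; the one point requiring care is the identification of the trivial component, where it is cleanest to combine the ``$\ad$ is unchanged by $\subcat{(-)}$'' remark with the computation $\ad(\mathcal{D} \boxtimes \Vec(\Z{N})) = \ad(\mathcal{D})$, rather than attempting to control the trivial component directly by a generation argument.
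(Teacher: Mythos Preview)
Your proof is correct. The paper states this lemma without proof, treating it as a straightforward consequence of Definition~\ref{def:subcat} and the remark that $\ad(\cC) = \ad(\subcat{\cC})$; your argument supplies exactly the details one would expect, and there is nothing to compare against.
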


Given a $\Z{M}$-graded fusion category $\cC$, and a cocycle $\omega \in H^3(\Z{M},\mathbb{C}^\times)$, we can use the above construction to realise the category $\cC^\omega$ in a concrete manner.
\begin{lemma}\label{lem:twtwt}
There exists an equivalence of extensions 
\[  \cC^\omega \to \subcat{\cC \boxtimes \Vec^\omega(\Z{M})}.\]
\end{lemma}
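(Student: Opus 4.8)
The plan is to construct the equivalence of extensions explicitly on objects and then check that it is monoidal with the desired cocycle-twisted associator.

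First I would set up notation. The category $\cC^\omega$ has the same objects, morphisms, tensor product and unit constraints as $\cC$, but associator $\alpha^{\cC^\omega}_{X_f,Y_g,Z_h} = \omega(f,g,h)\,\alpha^{\cC}_{X_f,Y_g,Z_h}$, where $f,g,h \in \Z{M}$ are the gradings. On the other side, $\Vec^\omega(\Z{M})$ is the pointed category with simple objects $\{ \delta_i : i \in \Z{M}\}$ and associator given by the cocycle $\omega$; the box-tensor $\cC \boxtimes \Vec^\omega(\Z{M})$ is $\Z{M}\times\Z{M}$-graded, with the first factor carrying the grading of $\cC$ and the second the tautological grading of $\Vec^\omega(\Z{M})$. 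By Definition~\ref{def:subcat}, $\subcat{\cC \boxtimes \Vec^\omega(\Z{M})}$ is the subcategory $\otimes$-generated by objects in the $(1\times 1)$-graded piece; concretely, a simple object $X_f$ of $\cC$ sitting in grading $f$, tensored appropriately, lands in the sub whenever paired with $\delta_f$. So the underlying abelian category of $\subcat{\cC\boxtimes\Vec^\omega(\Z{M})}$ is $\bigoplus_{f\in\Z{M}} \cC_f \boxtimes \langle \delta_f\rangle$, which is abelian-equivalent to $\cC$ via $X_f \mapsto X_f \boxtimes \delta_f$. I would take this assignment as the functor $\cF$, with the obvious action on morphisms, and note immediately that it is a $\Z{M}$-graded bijection on simple objects, so if it is monoidal it will automatically be an equivalence of extensions (its restriction to the trivial grading is the identity on $\ad(\cC)$).

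Next I would equip $\cF$ with a tensorator. On objects, $\cF(X_f) \otimes \cF(Y_g) = (X_f\boxtimes\delta_f)\otimes(Y_g\boxtimes\delta_g) = (X_f\otimes Y_g)\boxtimes(\delta_f\otimes\delta_g) = (X_f\otimes Y_g)\boxtimes \delta_{f+g} = \cF(X_f\otimes Y_g)$, using that $\delta_f\otimes\delta_g \cong \delta_{f+g}$ canonically in any $\Vec$-type category. So the tensorator can be taken to be the identity (or the canonical iso $\delta_f\otimes\delta_g\cong\delta_{f+g}$, which we may normalize to be trivial). The content is then the hexagon/pentagon compatibility: comparing the two ways of reassociating $\cF(X_f)\otimes\cF(Y_g)\otimes\cF(Z_h)$, the associator in $\cC\boxtimes\Vec^\omega(\Z{M})$ is $\alpha^{\cC}_{X_f,Y_g,Z_h}\boxtimes \alpha^{\Vec^\omega}_{\delta_f,\delta_g,\delta_h} = \alpha^{\cC}_{X_f,Y_g,Z_h}\boxtimes \big(\omega(f,g,h)\cdot\mathrm{id}\big)$, which under $\cF$ must match $\alpha^{\cC^\omega}_{X_f,Y_g,Z_h} = \omega(f,g,h)\,\alpha^{\cC}_{X_f,Y_g,Z_h}$ — and it does, by construction. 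That is precisely the required equation; the cocycle condition on $\omega$ is what makes $\Vec^\omega(\Z{M})$ a well-defined fusion category in the first place, so no extra check is needed there.

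The \textbf{main obstacle} — really the only subtle point — is book-keeping around the $\subcat{\,\cdot\,}$ construction: one must verify that the $(1\times 1)$-generated subcategory of $\cC\boxtimes\Vec^\omega(\Z{M})$ is exactly $\bigoplus_f \cC_f\boxtimes\langle\delta_f\rangle$ and not something larger or smaller, i.e. that tensoring objects of the form $X_f\boxtimes\delta_f$ reaches every $Y_g\boxtimes\delta_g$ and nothing off the diagonal. This follows because $\cC$ is $\otimes$-generated over $\ad(\cC)$ in each graded component (the grading group is generated by the degrees appearing) together with the remark after Definition~\ref{def:subcat} that $\ad(\subcat{\cC\boxtimes\Vec(\Z{N})}) = \ad(\cC)$; once the underlying category is pinned down, everything else is the formal verification above. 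I would also remark that the unit constraints are transported trivially since the unit object $\mathbf{1}\boxtimes\delta_0$ is sent to $\mathbf{1}$, completing the check that $\cF$ is a monoidal equivalence of $\Z{M}$-graded extensions.
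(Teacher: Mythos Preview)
Your proof is correct and follows the same approach as the paper's, namely the diagonal functor $X_f \mapsto X_f \boxtimes \delta_f$; you simply supply the associator check that the paper calls ``routine.'' One small imprecision: the restriction to the trivially graded piece is the identity on $\cC_0$, not on $\ad(\cC)$ (these need not coincide for an arbitrary faithfully $\Z{M}$-graded $\cC$), though this does not affect the argument.
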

\begin{proof}
We construct a fully faithful functor 
\[  \cC^\omega \to \cC \boxtimes \Vec^\omega(\Z{M})\]
by 
\[  X_i \mapsto  X_i \boxtimes i.\]
It is routine to check this functor is monoidal, and that the image of this functor exactly lies in $\subcat{\cC \boxtimes \Vec^\omega(\Z{M})}$. Thus this functor gives a monoidal equivalence
\[ \cC^\omega \to \subcat{\cC \boxtimes \Vec^\omega(\Z{M})}.  \] 
This equivalence preserves the grading group, and is the identity on the trivially graded piece, hence it is an equivalence of extensions.
\end{proof}
The above Lemma shows that classifying $\Z{M}$-graded categories, up to twisting the associator, is really no different to classifying $\Z{M}$-graded categories on the nose, as we can concretely construct all cocyle twists. We leave such cocycle twists out of the statement of our main classification theorem as it makes the statement of the theorem very messy. 

Given a fusion category $\cC$, the results of \cite{MR2677836} give a classification of $\Z{M}$-graded extensions of $\cC$, up to twisting the associator. The main ingredients of the classification are $\BrPic(\cC)$ the Brauer-Picard group of $\cC$, and $\Inv(\cZ(\cC))$ the group of invertible elements in the centre of $\cC$, along the knowledge of how $\BrPic(\cC)$ acts on $\Inv(\cZ(\cC))$.

We have that $\Z{M}$-graded extensions of $\cC$, up to twisting the associator, are classified by a tuple $(c,T)$, where 
\[  c : \Z{M} \to \BrPic(\cC),\]
is a group homomorphism such that a certain obstruction $o_3(c) \in H^3(\Z{M} , \Inv(\cZ(\cC)))$ vanishes, and $T$ is an element of the group $H^2(\Z{M} , \Inv(\cZ(\cC)))$. We are purposely light on the details of this classification, as we need surprising little knowledge of this extension theory for this paper. Additional information can be found in the papers \cite{MR2677836} and \cite{1711.00645}. The main takeaway of this classification is that for a fixed homomorphism $ c : \Z{M} \to \BrPic(\cC)$, the order of the group $H^2(\Z{M} , \Inv(\cZ(\cC)))$ provides an upper bound for the number of $\Z{M}$-graded extensions, up to twisting the associator, realising $c$. An important detail we need to know regarding the extension theory of graded categories is that any extension corresponding to the homomorphism $c$ is equivalent to the category
\[   \bigoplus_{n\in \Z{M}} c(n)\]
as a bimodule category over $\cC$. This in particular implies that the Frobenius-Perron dimensions of the objects in an the extension corresponding to the homomorphism $c$ are completely determined by $c$.

Somewhat annoyingly for the purposes of this paper, the above classification of $\Z{M}$-graded extensions of $\cC$ is only up to equivalence of extensions, and not monoidal equivalence. This issue has been somewhat rectified in \cite{1711.00645}, where it is shown that in order to get a representative from each monoidal equivalence class, one only needs to consider homomorphisms
\[  c : \Z{M} \to \BrPic(\cC),\]
up to post-composition by the inner automorphisms of $\BrPic(\cC)$ induced by the invertible bimodules $_\mathcal{F}\cC$, where $\mathcal{F}\in \TenAut(\cC)$. Assuming some additional restrictive conditions on $\cC$, one also only has to consider 2-cocycles $T\in H^2(\Z{M} , \Inv(\cZ(\cC)))$, up to action by the bimodules $_\mathcal{F}\cC$ (using the specified action of $\BrPic(\cC)$ on $\Inv(\cZ(\cC))$. These two results will be important in refining our main classification theorem.

\subsection{Fusion categories of $ADE$ type}\label{sub:ADE}
Here we define the \textit{fusion categories of $ADE$ type}, which are key objects for this paper. The importance of these categories is that any fusion category $\otimes$-generated by an object of Frobenius-Perron dimension less than 2, must be a $\Z{M}$-graded extension of the adjoint subcategory of a fusion category of $ADE$ type. Further the fusion categories of $ADE$ type are examples of fusion categories $\otimes$-generated by an object of Frobenius-Perron dimension less than 2. A large portion of the categories appearing in our classification statement are constructed from the fusion categories of $ADE$ type.

\begin{trivlist}\leftskip=2em
\item \textbf{Fusion categories of $A_N$ type}:

For a fixed $N$, fusion categories of $A_N$ type are classified by $n \in \Z{N+1}^\times$. We label these categories $A_N^{(n)}$. Let $q = e^{\frac{2 i \pi n}{2N+2}}$. The categories $A_N^{(n)}$ can be realised as the idempotent completion of the $A_N$ planar algebra with loop parameter $q + q^{-1}$, or alternatively as the semisimplification of the category $\Rep(U_{-q}(\mathfrak{sl}_2))$. The categories $A_N^{(n)}$ have $N$ simple objects, which we label $f^{(m)}$ for $0\leq m < N$. The categories $A_N^{(n)}$ all have the same fusion rules, which can be found in \cite{AN-survey} under the translation $f^{(m)} \to X_m$.

\vspace{1em}

\item \textbf{Fusion categories of $D_{2N}$ type}:

For a fixed $N$, fusion categories of $D_{2N}$ type are classified by $n \in \Z{4N-3}^\times$, along with a choice of sign $\pm$. We label these categories $D_{2N}^{(n,\pm)}$. Let $q = e^{\frac{2 i \pi n}{8N-6}}$. The categories $D_{2N}^{(n,\pm)}$ can be realised as the idempotent completion of the $D_{2N}$ planar algebra with loop parameter $q + q^{-1}$ and rotational eigenvalue of $S$ given by $\pm i$, or alternatively as a certain de-equivariantization of the category $\Rep(U_{-q}(\mathfrak{sl}_2))$. The categories $D_{2N}^{(n,\pm)}$ have $2N$ simple objects, which we label $f^{(m)}$ for $0\leq m < 2N-2$ , $P$, and $Q$. The categories $D_{2N}^{(n,\pm)}$ all have the same fusion rules which can be found in \cite[Section 7]{MR1936496}, under the translation $f^{(m)} \to X_m$, $P\to X_{2N}^+$, and $Q\to X_{2N}^-$.

\vspace{1em}

\item \textbf{Fusion categories of $E_6$ type}:

The fusion categories of $E_6$ type are classified by $n\in \Z{12}^\times = \{1,5,7,11\}$, along with a choice of sign $\pm$. We label these categories $E_6^{(n,\pm)}$. Let $q = e^{\frac{2 i \pi n}{24}}$. The categories $E_6^{(n,\pm)}$ can be realised as the idempotent completion of the $E_6$ planar algebra with loop parameter $q + q^{-1}$ and rotational eigenvalue of $S$ given by $\pm e^{\frac{2\pi i 2}{3}}$, or alternatively as a quantum subgroup of the category $\Rep(U_{-q^{\pm 1}}(\mathfrak{sl}_2))$. The categories $E_{6}^{(n,\pm)}$ have $6$ simple objects, which we label $f^{(0)}$, $f^{(1)}$, $f^{(2)}$, $X$, $Y$, and $Z$. The categories $E_6^{(n,\pm)}$ all have the same fusion rules which can be found in \cite[Section 3.3]{MR1145672}, under the translation $f^{(0)} \to \text{id}$, $f^{(1)} \to \rho_1$, $f^{(2)} \to \rho_2$, $X \to \rho_3$, $Y \to \alpha \rho_1$, and $Z \to \alpha$.

\vspace{1em}

\item \textbf{Fusion categories of $E_8$ type}:

The fusion categories of $E_8$ type are classified by $n \in \Z{30}^\times = \{1,7,11,13,17,19,23,29\}$, along with a choice of sign $\pm$. We label these categories $E_8^{(n,\pm)}$. Let $q = e^{\frac{2 i \pi n}{60}}$. The categories $E_8^{(n,\pm)}$ can be realised as the idempotent completion of the $E_8$ planar algebra with loop parameter $q + q^{-1}$ and rotational eigenvalue of $S$ given by $\pm e^{\frac{2\pi i 3}{5}}$, or alternatively as a quantum subgroup of the category $\Rep(U_{-q^{\pm 1}}(\mathfrak{sl}_2))$. The categories $E_{8}^{(n,\pm)}$ have $8$ simple objects, which we label $f^{(0)}$, $f^{(1)}$, $f^{(2)}$, $f^{(3)}$, $f^{(4)}$ $U$, $V$, and $W$. The categories $E_8^{(n,\pm)}$ all have the same fusion rules which can be found in \cite[Section 3.3]{MR1145672}, under the translation $f^{(0)} \to \text{id}$, $f^{(1)} \to \rho_1$, $f^{(2)} \to \rho_2$,  $f^{(3)} \to \rho_3$, $f^{(4)} \to \rho_4$, $U \to \rho_7$, $V \to  \rho_5$, and $W \to  \rho_6$.

\vspace{1em}
\end{trivlist}

We have the following folklore result regarding the fusion categories of $ADE$ type, showing that they essentially classify pivotal fusion categories generated by a symmetrical self-dual object of Frobenius-Perron dimension less than 2.
\begin{theorem}\label{thm:ADET}
Let $\cC$ be a pivotal fusion category $\otimes$-generated by a symmetrically self-dual object of Frobenius-Perron dimension less than 2. Then $\cC$ is equivalent to a fusion category of $ADE$ type, or to $\ad(A_{2N}^{(n)})$ for $n \in \Z{2N+1}^\times / \{ \pm \}$.
\end{theorem}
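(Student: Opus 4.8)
The plan is to pass from $\cC$ to the fusion graph of its generating object, pin that graph down using the classical classification of graphs of norm less than $2$, and then invoke the (folklore) identification of the categories realising each such graph.

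First I would show the $\otimes$-generating object $X$ is simple. Since $\FPdim$ is additive over direct sums and every nonzero object has $\FPdim\ge 1$, the bound $\FPdim(X)<2$ leaves only two possibilities: $X$ is simple, or $X$ is invertible; in the latter case self-duality forces $X\otimes X\cong\mathbf 1$, so $\cC=\langle X\rangle$ is $\Vec$ or $\Vec(\Z{2})$, both of $A$-type. So assume $X$ is simple with $1<\FPdim(X)<2$. Because $X$ is self-dual, the fusion matrix $N_X$, whose $(Y,Z)$-entry is $\dim\Hom(X\otimes Z,Y)$, is symmetric, hence is the adjacency matrix of a finite graph $\Gamma$ (possibly with loops); $\Gamma$ is connected because $X$ generates $\cC$ under $\otimes$, and its largest eigenvalue is $\FPdim(X)\in(1,2)$. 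By Smith's classification of connected graphs of norm less than $2$ (allowing loops), $\Gamma$ is one of the Dynkin diagrams $A_m$, $D_m$, $E_6$, $E_7$, $E_8$, or a tadpole $T_m$ --- the path $A_m$ with a loop adjoined at an extremal vertex, which is precisely the fusion graph of the generator of $\ad(A_{2m}^{(n)})$.

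Next I would reconstruct the Grothendieck ring of $\cC$ from $\Gamma$ and eliminate $D_{2N+1}$ and $E_7$. Placing $X$ at an extremal vertex, the matrices of tensoring with the remaining simples are forced by $N_X$ together with associativity of the fusion ring, and carrying this out reproduces the based ring of $A_m^{(n)}$, $D_{2N}^{(n,\pm)}$, $E_6^{(n,\pm)}$, $E_8^{(n,\pm)}$, or --- when $\Gamma=T_N$ --- of $\ad(A_{2N}^{(n)})$, whereas for $\Gamma=D_{2N+1}$ or $E_7$ the would-be fusion ring admits no pivotal categorification. This last exclusion is the categorical form of Ocneanu's triple-point (flatness) obstruction, and I would cite it from the literature on subfactors of index less than $4$ (see e.g. \cite{MR1193933,MR1929335} and the references therein) rather than reprove it. It then remains to upgrade the Grothendieck ring to a full identification of $\cC$: for $\Gamma=A_m$ or $T_N$, the symmetric self-duality of $X$ and the bound $\FPdim(X)<2$ yield a dominant pivotal functor from a Temperley--Lieb category into $\cC$ sending the generating strand to $X$, and uniqueness of the Temperley--Lieb categories at a fixed loop parameter gives $\cC\simeq A_m^{(n)}$ or $\ad(A_{2N}^{(n)})$ with $n$ the Galois parameter fixed by the categorical dimension of $X$; for $\Gamma=D_{2N}$, $E_6$, $E_8$ one instead appeals to the known classification of pivotal fusion categories with these fusion rings (equivalently, of the $ADE$ planar algebras), matching the residual discrete invariant --- the rotational/Frobenius--Schur sign recorded by $\pm$ --- to conclude $\cC\simeq D_{2N}^{(n,\pm)}$, $E_6^{(n,\pm)}$, or $E_8^{(n,\pm)}$.

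The elementary ingredients here are the reduction to $X$ simple, the symmetry of $N_X$, and Smith's theorem. The genuine obstacles --- and the reason the statement is ``folklore'' rather than immediate --- are the exclusion of $D_{2N+1}$ and $E_7$ via the triple-point obstruction, and the uniqueness-of-categorification input that promotes each surviving fusion ring to one of the listed categories; both of these live in the subfactor and planar-algebra literature, and I would assemble the proof by invoking those results rather than redeveloping that machinery.
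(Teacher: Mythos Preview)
The paper does not prove this theorem at all: it is stated as a ``folklore result'' immediately after the description of the $ADE$ categories, with no argument given. So there is no paper proof to compare against; your outline \emph{is} essentially the folklore argument the paper is alluding to --- pass to the fusion graph of the symmetrically self-dual generator, invoke the $ADET$ classification of connected (loop-allowing) graphs of norm less than $2$, exclude $D_{\text{odd}}$ and $E_7$ via the triple-point obstruction, and then appeal to the known uniqueness of pivotal categorifications of the remaining fusion rings.

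A couple of small points worth tightening. First, your dichotomy ``$X$ is simple, or $X$ is invertible'' is not a dichotomy; what you actually use (and what is true) is that $\FPdim(X)<2$ forces $X$ to be simple, and if moreover $\FPdim(X)=1$ then $X$ is invertible and self-duality gives $\cC\simeq A_1$ or $A_2$. Second, be careful with ``placing $X$ at an extremal vertex'': what you actually know is that $\mathbf 1$ is a leaf (it is joined only to $X$, with multiplicity one), which pins down the position of $\mathbf 1$ on the Dynkin/tadpole diagram and hence that of $X$ as the adjacent vertex; $X$ itself need not be extremal. Finally, for the tadpole case your Temperley--Lieb argument is exactly what the paper later packages as Proposition~\ref{lem:adclass} (the adjoint $A_N$ case, citing \cite[Theorem A.3]{1801.04409}), so you are on solid ground there. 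None of these affect the correctness of your approach.
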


A corollary of the main classification Theorem of this paper gives an improvement to this result, allowing us to remove the words pivotal and symmetrically. 

The fusion categories of $ADE$ type are all $\Z{2}$-graded, and thus have the non-trivial adjoint subcategories $\ad(A_N^{(n)}), \ad(D_{2N}^{(n,\pm)}), \ad(E_6^{(n,\pm)}),$ and $\ad(E_8^{(n,\pm)})$. We call these distinguished subcategories, the \textit{categories of adjoint $ADE$ type.} In the $A_N$ case, the adjoint subcategory is generated by the simple objects 
\[ \left\{  f^{(2n)} : 0 \leq n \leq \frac{N-1}{2}\right\},\] 
in the $D_{2N}$ case the adjoint subcategory is generated by the simple objects 
\[ \{  f^{(2n)} : 0 \leq n \leq N-2\} \cup \{ P, Q\},\]  
in the $E_6$ the adjoint subcategory is generated by the simple objects 
\[ \{ f^{(0)}, f^{(2)}, Z\},\] 
and in the $E_8$ case the adjoint subcategory is generated by the simple objects
 \[\{ f^{(0)}, f^{(2)}, f^{(4)}, W   \}.\]

We have the following monoidal equivalences between the fusion categories of adjoint $ADE$ type.
\begin{align*}
\ad(A_N^{(n)}) &\simeq \ad(A_N^{(-n)}), \\
\ad(D_{2N}^{(n,+)}) \simeq \ad(D_{2N}^{(-n,+)}) &\simeq \ad(D_{2N}^{(n,-)}) \simeq \ad(D_{2N}^{(-n,-)}),\\
\ad(E_6^{(n,+)}) &\simeq \ad(E_6^{(-n,+)}), \\ 
\ad(E_6^{(n,-)}) &\simeq \ad(E_6^{(-n,-)}), \\ 
\ad(E_8^{(n,+)}) &\simeq \ad(E_8^{(-n,+)}), \\ 
\ad(E_8^{(n,-)}) &\simeq \ad(E_8^{(-n,-)}). \\ 
\end{align*}
Thus categories of adjoint $A_N$ type are classified by $n\in \Z{N+1}^\times / \{\pm\}$, categories of adjoint $D_{2N}$ type are classified by $n\in \Z{4N-3}^\times / \{\pm\}$, categories of adjoint $E_6$ type are classified by $n\in \Z{12}^\times / \{\pm\}$ along with a choice of sign, and categories of adjoint $E_8$ type are classified by $n\in \Z{30}^\times / \{\pm\}$ along with a choice of sign. As $\ad(D_{2N}^{(n,+)}) \simeq \ad(D_{2N}^{(n,-)})$ we simply write (in a slight abuse of notation) $\ad(D_{2N}^{(n)})$ for the categories of adjoint $D_{2N}$ type.

The following Proposition shows that if a pivotal fusion category has the same fusion rules as a category of adjoint $ADE$ type, then it is actually equivalent to a category of adjoint $ADE$ type.
\begin{prop}\label{lem:adclass}
Let $\cC$ be a pivotal fusion category with the same fusion rules as either $\ad(A_N^{(n)})$, $\ad(D_{2N}^{(n)})$, $\ad(E_6^{(n,\pm)})$, or $\ad(E_8^{(n,\pm)})$, then $\cC$ is monoidally equivalent to one of the categories
\[  \ad(A_N^{(n)}) ,\quad  \ad(D_{2N}^{(n)}),\quad  \ad(E_{6}^{(n,\pm)}),\quad \text{ or } \quad \ad(E_8^{(n,\pm)})\]
respectively.
\end{prop}
\begin{proof}
The adjoint $A_N$ case is exactly \cite[Theorem A.3]{1801.04409}.

Let $\cC$ be a pivotal fusion category with adjoint $D_{2N}$ fusion rules, and let $X$ be the "$f^{(2)}$" object of $\cC$. As $\operatorname{dimHom}(X^{\otimes p} \to \mathbf{1})$ is equal to $(1,0,1,1,3)$ for $0 \leq p \leq 4$, we have from \cite{MR3624901} that the category generated by $X$ and the trivalent vertex in $\Hom(X^{\otimes 3}\to \mathbf{1})$ is equivalent to $\ad(A_{4N-3}^{(n)})$ for some $n \in \Z{4N-2}^\times$. Thus there exists a dominant functor
\[  \ad(A_N^{(n)}) \to \cC. \]
Via \cite{MR2863377} we have that $\cC$ is equivalent to the fusion category of $(A,\sigma)$ modules in $\ad(A_N^{(n)})$, where $(A,\sigma)$ is a simple central commutative algebra in $\ad(A_N^{(n)})$. Considering global Frobenius-Perron dimensions shows that the Frobenius-Perron dimension of $A$ is two, and thus $A = f^{(0)} \oplus f^{(4N-4)}$. Therefore $\cC$ is a de-equivariantization of $\ad(A_N^{(n)})$ by the subcategory $\langle f^{(4N-4)} \rangle$. Thus
\[  \cC \simeq \ad\left({A_N^{(n)}}\right)_{\langle f^{(4N-4)} \rangle} \simeq \ad\left({A_N^{(n)}}_{\langle f^{(4N-4)} \rangle}\right) \simeq \ad\left(D_{2N}^{(n,\pm)}\right) =  \ad\left(D_{2N}^{(n)}\right).\]

The adjoint $E_6$ case is exactly \cite{1309.4822}.

The adjoint $E_8$ case follows near identically to the adjoint $D_{2N}$ case. The same argument shows that if $\cC$ has adjoint $E_8$ fusion rules, then $\cC$ must be the category of $(f^{(0)} \oplus f^{(10)}\oplus f^{(18)}\oplus f^{(28)},\sigma)$ modules in $\ad(A_{29}^{(n)})$. Hence
\[  \cC \simeq \operatorname{Mod}_{\ad(A_{29}^{(n)})}( f^{(0)} \oplus f^{(10)}\oplus f^{(18)}\oplus f^{(28)},\sigma   ) \simeq  \ad( \operatorname{Mod}_{A_{29}^{(n)}}( f^{(0)} \oplus f^{(10)}\oplus f^{(18)}\oplus f^{(28)},\sigma   )) \simeq \ad( E_8^{(n,\pm)}).\]
\end{proof}

The key tool in proving the main classification result of this paper is the following Theorem, due to Morrison and Snyder. 

\begin{theorem}\label{thm:ADEext}
Let $\cC$ be a fusion category $\otimes$-generated by an object $X$, with Frobenius-Perron dimension less than 2, such that $\ad(\cC) = \langle X\otimes X^*\rangle$. Then $\cC$ is a cyclic extension of a category of adjoint $ADE$ type.
\end{theorem}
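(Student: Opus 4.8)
The plan is to reduce the statement, in three stages, to known classifications of fusion categories generated by a small self-dual object, with $K$-normality entering precisely to control the adjoint subcategory. Write $X$ for the $K$-normal $\otimes$-generator and put $d := \FPdim(X)$, so that $d = 2\cos(\pi/\ell)$ for some integer $\ell \geq 3$. First I would note that the universal grading of $\cC$ is cyclic: the degree of $X$ generates a cyclic subgroup of the universal grading group, and since every simple object of $\cC$ is a summand of a tensor power of $X$, every simple object lies in the graded component indexed by a power of $\deg(X)$; hence the universal grading group is $\Z{M}$ with $M = \operatorname{ord}(\deg X)$, and after relabelling $\cC = \bigoplus_{i\in\Z{M}}\cC_i$ with $\cC_0 = \ad(\cC)$ and $X \in \cC_1$. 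If $\ell = 3$ then $X$ is invertible and $\cC \simeq \Vec(\Z{M})$, which is trivially a cyclic extension of a category of adjoint $ADE$ type; so assume $\ell \geq 4$, and first treat the case $M \geq 2$.

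The hypothesis is used to prove that $\ad(\cC) = \langle X\otimes X^*\rangle$. The inclusion $\supseteq$ is clear. For $\subseteq$, a simple object $Y$ of $\ad(\cC)$ is a summand of $Z\otimes Z^*$ for some simple $Z$; since $\mathbf{1}$ is a summand of $X^{\otimes s}\otimes (X^*)^{\otimes s}$ we get $Y\subseteq Z\otimes Z^*\subseteq (Z\otimes X^{\otimes s})\otimes(Z\otimes X^{\otimes s})^*$, and if $Z$ is a summand of $X^{\otimes m}$ then $Z\otimes X^{\otimes s}$ is a summand of $X^{\otimes(m+s)}$. Choosing $s$ with $m+s\geq K$ and applying $K$-normality gives $X^{\otimes(m+s)}\otimes (X^*)^{\otimes(m+s)}\cong (X\otimes X^*)^{\otimes(m+s)}$, whence $Y\in\langle X\otimes X^*\rangle$. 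It follows that the non-negative integer matrix $B$ describing the action of $-\otimes(X\otimes X^*)$ on the simple objects of $\cC_0$ is irreducible, so its Perron--Frobenius eigenvalue equals $\FPdim(X\otimes X^*)=d^2$. This is the one step where the hypothesis is essential: in its absence $\langle X\otimes X^*\rangle$ may be a proper subcategory of $\ad(\cC)$ and $B$ reducible.

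Now form the fusion graph $\Gamma$ of $X$ from $\cC_0$ to $\cC_1$: the bipartite graph with $\dim\Hom(a\otimes X\to b)$ edges between a simple $a$ of $\cC_0$ and a simple $b$ of $\cC_1$. If $N$ is its adjacency matrix then $N^{T}N = B$, so $\|\Gamma\|^2 = \|N\|^2 = d^2$, i.e. $\|\Gamma\| = d < 2$. The graph $\Gamma$ is connected (every simple of $\cC_0$ meets an edge and they are joined to one another through $B$ by the previous step, while every simple of $\cC_1$ is joined to $\cC_0$ since $b\otimes X^*\neq 0$ lies in $\cC_0$), so by the classical classification of connected graphs of norm less than $2$ it is an $A$, $D$, $E_6$, $E_7$ or $E_8$ Dynkin diagram; in particular $X$ acts multiplicity-freely between $\cC_0$ and $\cC_1$. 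Since the based fusion ring of $\cC_0$ is generated by $[X\otimes X^*]$ and multiplication by this element on the simples of $\cC_0$ is $N^{T}N = \Gamma\Gamma^{T}$, the graph $\Gamma$ determines that fusion ring, which one then recognises as one of the adjoint $ADE$ fusion rings. Assuming $\cC_0$ is pivotal, Proposition~\ref{lem:adclass} identifies it with a category of adjoint $A_N$, $D_{2N}$ or $E_6$ type in those cases; the $E_8$ fusion-rule case is handled analogously together with the known classification of the associated module categories. Finally, when $M = 1$ one has $X\in\ad(\cC)=\cC$; if $X$ is self-dual this falls under Theorem~\ref{thm:ADET}, whose only trivially-graded output is $\ad(A_{2N}^{(n)})$, and the remaining sub-case is disposed of by a short direct argument. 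In every case $\ad(\cC)$ is of adjoint $ADE$ type and $\cC$ is, by construction, a cyclic extension of it.

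The main obstacle, I expect, lies in the last stage: turning the combinatorial datum $\Gamma$ into the precise based fusion ring of $\cC_0$ and matching it against the adjoint $ADE$ fusion rings — in particular checking that the Dynkin types not corresponding to a fusion category do not cause problems, and treating the $E_8$ case — together with the fact that $\cC$ (and hence $\cC_0$) is not a priori pivotal, so that Theorem~\ref{thm:ADET} and Proposition~\ref{lem:adclass} cannot be quoted verbatim. By contrast, the first three stages are routine once $K$-normality has been used to secure the irreducibility that makes the estimate $\|\Gamma\| < 2$ work.
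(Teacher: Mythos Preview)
Your argument that $\ad(\cC) = \langle X\otimes X^*\rangle$ via $K$-normality is essentially the paper's; the divergence is in how you then identify this subcategory.

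The paper bypasses bipartite fusion graphs entirely. It sets $A_X := X\otimes X^* - \mathbf{1}$ and uses the evaluation and coevaluation morphisms of $X$ to build trivalent morphisms $A_X^{\otimes 2}\to A_X$ and $A_X\to A_X^{\otimes 2}$ satisfying the $\operatorname{SO}(3)_q$ skein relations, with $q$ fixed by $\FPdim(X)$. This yields a dominant tensor functor $\operatorname{SO}(3)_q \to \langle A_X\rangle$, so by \cite{MR2863377} the target is the category of modules over a central commutative algebra in $\operatorname{SO}(3)_q \simeq \ad(\operatorname{SU}(2)_q)$. Such module categories are precisely the adjoint $ADE$ categories by the quantum-subgroup classification for $\mathfrak{sl}_2$. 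One stroke: no case split on $\Gamma$, no separate treatment of $M=1$, no pivotality hypothesis needed to invoke Proposition~\ref{lem:adclass}, and the $D_{\text{odd}}$, $E_7$, $E_8$ shapes are absorbed into the known classification rather than analysed individually.

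Your route is more elementary in that it avoids the $\operatorname{SO}(3)_q$ machinery and the algebra-object formalism, but the gaps you flag are real rather than cosmetic. When $\Gamma$ comes out as $D_5$ or $E_7$, for instance, you still have to recognise $\cC_0$ as $\ad(A_7)$ or $\ad(D_{10})$ from $N^{T}N$ and the location of $\mathbf{1}$ among the leaves of $\Gamma$ --- which amounts to redoing a slice of the quantum-subgroup classification combinatorially --- and the pivotality issue for the $E_8$ fusion ring would need its own argument. The paper's approach packages all of this into a single appeal to the $\mathfrak{sl}_2$ story.
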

\begin{proof}
et $\cC$ be a fusion category $\otimes$-generated by an object $X$, with Frobenius-Perron dimension less than 2, such that $\ad(\cC) = \langle X\otimes X^*\rangle$. Choose
\[  \raisebox{-.5\height}{ \includegraphics[scale = .5]{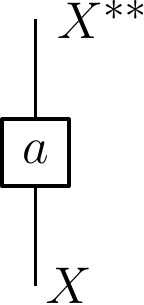}}: X\to X^{**}\]
an arbitrary choice of isomorphism, which exists by \cite[Proposition 2.1]{MR2183279}. Define $\delta_a^\pm$ the scalers (depending on choice of isomorphism $a$) by:
\[ \delta_a^+:= \raisebox{-.5\height}{ \includegraphics[scale = .5]{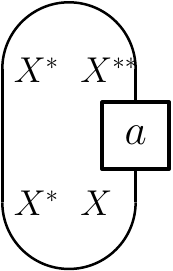}}, \qquad \text{and} \qquad \delta_a^-:= \raisebox{-.5\height}{ \includegraphics[scale = .5]{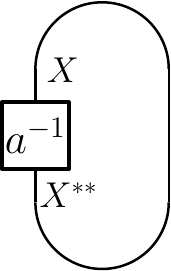}}.  \]
Taking the product of $\delta_a^+$ and $\delta_a^-$ gives a scaler 
\[    |X|^2 := \delta_a^+ \delta_a^-\]
which is independent of choice of isomorphism $a$. The scaler $|X|^2$ is typically called the Muger squared dimension of $X$.

Consider the self-dual object $B_X := X\otimes X^* - \mathbf{1}$, and $d^{(2)} \in \Hom( X\otimes X^* \to X\otimes X^*)$ the idempotent projecting onto $B_X$, given by:
\[   \raisebox{-.5\height}{ \includegraphics[scale = .5]{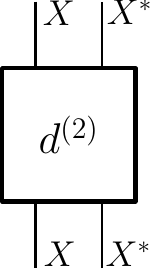}} := \raisebox{-.5\height}{ \includegraphics[scale = .6]{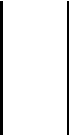}} - \frac{1}{\delta_a^{-} } \raisebox{-.5\height}{ \includegraphics[scale = .6]{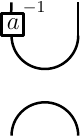}}.\]

We have the following morphisms:
\[ \operatorname{ev}_{X,a} :=  \raisebox{-.5\height}{ \includegraphics[scale = .5]{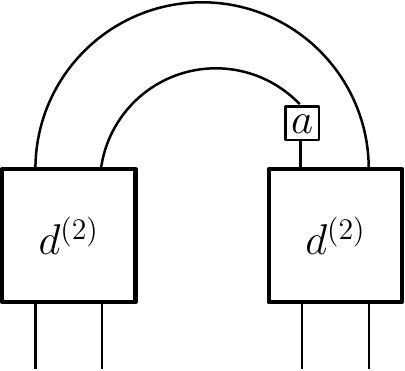}} : B_X \otimes B_X \to \mathbf{1} ,\]
\[ \operatorname{coev}_{X,a} :=  \raisebox{-.5\height}{ \includegraphics[scale = .5]{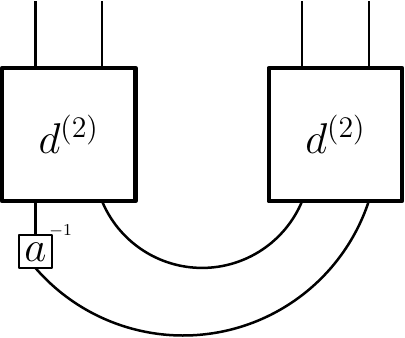}} : \mathbf{1}\to B_X\otimes B_X ,\]
\[ m_{X,a} :=\sqrt{\frac{\delta_a^-}{\delta_a^+\delta_a^- - 2}}  \raisebox{-.5\height}{ \includegraphics[scale = .5]{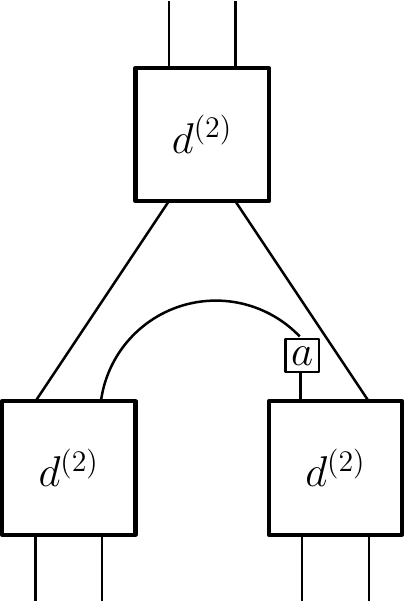}} : B_X\otimes B_X \to B_X ,\]
and
\[ w_{X,a} := \sqrt{\frac{\delta_a^-}{\delta_a^+\delta_a^- - 2}} \raisebox{-.5\height}{ \includegraphics[scale = .5]{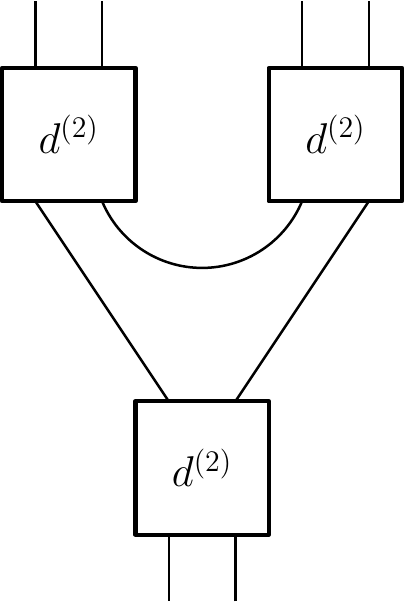}} :  B_X \to B_X\otimes B_X ,\]

A direct computation verifies that these four morphisms satisfy the standard $\operatorname{SO}(3)_q$ relations with loop parameter $|X|^2-1$. To determine $q$ we observe
\[  |X|^2 - 1 = [3]_q = [2]_q^2 - 1\]
thus,
\[ |X|^2 = [2]^2_q.\]
As the Frobenius-Perron dimension of $X$ is less than 2, we have 
\[  | |X|^2 | < 4,\]
and thus
\[ -2 < [2]_q < 2.\]
As $\cC$ is semi-simple, we thus have $q$ is a root of unity. 

As the four morphisms $\operatorname{ev}_{X,a}$, $\operatorname{coev}_{X,a}$, $m_{X,a}$, and $w_{X,a}$ satisfy the $\operatorname{SO}(3)_q$ relations for $q$ a root of unity, we get a dominant tensor functor from the semi-simplification of $\operatorname{SO}(3)_q$ to the category $\otimes$-generated by $B_X$. Hence by \cite{MR2863377} we have that $\langle B_X \rangle$ is monoidally equivalent to the category of $(A,\sigma)$ modules in $\operatorname{SO}(3)_q$, for $(A,\sigma)$ a central commutative algebra object. As $q$ is a root of unity, we have that 
\[  \ad( \overline{\operatorname{SU}(2)_q}) \simeq \overline{\operatorname{SO}(3)_q}.\]
Thus $\langle B_X \rangle$ is equivalent to the adjoint subcategory of $(A,\sigma)$ modules in $\operatorname{SU}(2)_q$, and hence equivalent to a category of adjoint $ADE$ type.

By our assumption that $\ad(\cC) = \langle X\otimes X^*\rangle$, we thus have that $\cC$ is a $G$-graded extension of a category of adjoint $ADE$ type. Further, as $\cC$ is $\otimes$-generated by a single object, we have that this $G$-graded extension is in fact a cyclic extension.

\end{proof}

Coupled with the results of \cite{MR2677836}, allowing us to classify $\Z{M}$-graded extensions, the above Theorem provides a practical base to prove the main classification result of this paper. We recall the relevant information about the categories of adjoint $ADE$ type, necessary to apply the classification of graded extensions to these categories. This information was computed in \cite{MR3808052}.

We begin by giving explicit descriptions of the invertible bimodules over the categories of adjoint $ADE$ type, along with the order of each bimodule. This information will be useful when we try to classify cyclic homomorphisms into the Brauer-Picard groups.

Let $\cM$ be an invertible bimodule over a $\Z{2}$-graded fusion category $\cC$. Then $\cM$ splits into two invertible bimodule categories over $\ad(\cC)$. We call these two $\ad(\cC)$ bimdoules, $\cM^\text{even}$ and $\cM^\text{odd}$ respectively. We can realise all the bimodules over the categories of adjoint $ADE$ type as the even and odd parts of certain bimodules over the categories of $ADE$ type, along with twistings by monoidal auto-equivalences as in Definition~\ref{dfn:twist}.

Over the $A_N^{(n)}$ fusion categories we have the trivial $A_N$ bimodule for all $N$, and the $D_{\frac{N+3}{2}}$ bimodule when $N \equiv 3 \pmod 4$ (we also have the $D_\text{even}$ modules when $N \equiv 1 \pmod 4$, but these don't have the structure of invertible bimodules over $A_N^{(n)}$). The auto-equivalences of $\ad(A_N^{(n)})$ are trivial, except for when $N = 7$, in which case there is a single non-trivial auto-equivalence sending $f^{(2)} \leftrightarrow f^{(4)}$.

 Over the $D_{2N}^{(n,\pm)}$ fusion categories we have the trivial $D_{2N}$ bimodule for all $N$, and the $E_7$, and $\overline{E_7}$ bimodules when $N=5$ (both $E_7$ and $\overline{E_7}$ have the same bimodule fusion rules). There is always an order two auto-equivalence of $\ad(D_{2N}^{(n)})$ that sends $P \leftrightarrow Q$, and when $N=5$ there is also an order 3 auto-equivalence sending $f^{(2)} \mapsto P \mapsto Q \mapsto f^{(2)}$. 

Over the fusion categories $E_6^{(n,\pm)}$ and $E_8^{(n,\pm)}$ there is just the trivial bimodule, and no non-trivial auto-equivalences of the adjoint subcategories. We summarise this information in Table~\ref{table:bim}.

\begin{sidewaystable}
\small
   
\centering 
\vspace{16cm}
    \begin{tabular}{c | l | c | l}
    	\toprule
			\multicolumn{2}{l |}{$\cC $}      & Bimodule categories over $\cC$ & Orders     \\
	\midrule
	            $\ad(A_N^{(n)})$   &  $N=3$ & $A_3^\text{even}$ and $A_3^\text{odd}$  &  $1,2$ \\							    		     		   
		 &  $N=7$ & $A_7^\text{even}$, $A_7^\text{odd}$,$D_5^\text{even}$, $D_5^\text{odd}$,$_{f^{(2)} \leftrightarrow f^{(4)} }A_7^\text{even}$, $_{f^{(2)} \leftrightarrow f^{(4)} }A_7^\text{odd}$,$_{f^{(2)} \leftrightarrow f^{(4)} }D_5^\text{even}$, and $_{f^{(2)} \leftrightarrow f^{(4)} }D_5^\text{odd}$ & $1,2,2,2,2,4,4,2$ \\		
                       & $N \equiv 0 \pmod 2$ &  $A_{N}^\text{even}$ & $1$ \\
		  & $N \equiv 1 \pmod 4$ &  $A_{N}^\text{even}$ and $A_{N}^\text{odd}$ & $1,2$ \\
		& $N \equiv 3 \pmod 4$ and $N \neq \{3,7\}$  & $A_{N}^\text{even}$, $A_{N}^\text{odd}$, $D_{\frac{N+1}{2}+1}^\text{even}$, and $D_{\frac{N+1}{2}+1}^\text{odd}$ & $1,2,2,2$\\

\midrule

$\ad(D_{2N}^{(n)})$ & $N \neq 5$ & $D_{2N}^\text{even}$, $D_{2N}^\text{odd}$,$_{P\leftrightarrow Q}D_{2N}^\text{even}$, and $_{P\leftrightarrow Q}D_{2N}^\text{odd}$ & $1,2,2,2$\\

                      & $N=5$ & $D_{10}^\text{even}$, $D_{10}^\text{odd}$, $E_7^\text{even}$, $\overline{E_7}^\text{even}$, $E_7^\text{odd}$, $\overline{E_7}^\text{odd}$,      & $1,2,2,2,3,3$ \\
                      &		    &$_{P\leftrightarrow Q}D_{10}^\text{odd}$, $_{P\leftrightarrow Q}D_{10}^\text{even}$,  $_{P\leftrightarrow Q}E_7^\text{odd}$, $_{P\leftrightarrow Q}\overline{E_7}^\text{odd}$, $_{P\leftrightarrow Q}\overline{E_7}^\text{even}$, $_{P\leftrightarrow Q}E_7^\text{even}$, & $2,2,2,2,6,6$\\
                      &		    & $_{f^{(2)} \leftrightarrow P}E_7^\text{even}$,$_{f^{(2)} \leftrightarrow P}E_7^\text{odd}$, $_{f^{(2)} \leftrightarrow P}D_{10}^\text{even}$,  $_{f^{(2)} \leftrightarrow P}\overline{E_7}^\text{odd}$,  $_{f^{(2)} \leftrightarrow P}D_{10}^\text{odd}$,$_{f^{(2)} \leftrightarrow P}\overline{E_7}^\text{even}$, & $2,2,2,2,6,6$\\
                      &		    &  $_{f^{(2)} \leftrightarrow Q}\overline{E_7}^\text{even}$, $_{f^{(2)} \leftrightarrow Q}E_7^\text{odd}$,$_{f^{(2)} \leftrightarrow Q}\overline{E_7}^\text{odd}$, $_{f^{(2)} \leftrightarrow Q}D_{10}^\text{even}$,  $_{f^{(2)} \leftrightarrow Q}E_7^\text{even}$,$_{f^{(2)} \leftrightarrow Q}D_{10}^\text{odd}$,& $2,2,2,2,6,6$\\
                      &		    & $_{f^{(2)} \mapsto P \mapsto Q}E_7^\text{odd}$, $_{f^{(2)} \mapsto P \mapsto Q}\overline{E_7}^\text{even}$,$_{f^{(2)} \mapsto P \mapsto Q}D_{10}^\text{odd}$,$_{f^{(2)} \mapsto P \mapsto Q}E_7^\text{even}$,  $_{f^{(2)} \mapsto P \mapsto Q}D_{10}^\text{even}$,$_{f^{(2)} \mapsto P \mapsto Q}\overline{E_7}^\text{odd}$,  & $3,6,6,6,3,3$\\
		     &		    & $_{f^{(2)} \mapsto Q \mapsto P}\overline{E_7}^\text{odd}$, $_{f^{(2)} \mapsto Q \mapsto P}E_7^\text{even}$, $_{f^{(2)} \mapsto Q \mapsto P}\overline{E_7}^\text{even}$,$_{f^{(2)} \mapsto Q \mapsto P}D_{10}^\text{odd}$, $_{f^{(2)} \mapsto Q \mapsto P}E_7^\text{odd}$, $_{f^{(2)} \mapsto Q \mapsto P}D_{10}^\text{even}$ & $3,6,6,6,3,3$\\
\midrule

$\ad(E_6^{(n,\pm)})$ & & $E_6^\text{even}$ and $E_6^\text{odd}$ & $1,2$\\

\midrule

$\ad(E_8^{(n,\pm)})$ & & $E_8^\text{even}$ and $E_8^\text{odd}$ & $1,2$\\
				                 
    	\bottomrule
    \end{tabular} 
  \caption{\label{table:bim} Bimodules over the categories of adjoint $ADE$ type}
  
\end{sidewaystable}

We also present Table~\ref{tab:Inv} showing the invertible objects in the centres of each category of adjoint $ADE$ type. Details on the notations used for the objects in the centres can be found in \cite{MR3808052}. When an invertible bimodule acts non-trivially on the group of invertible objects in the centre, we also include the information of the action. 
\begin{sidewaystable}

\centering 
\vspace{16cm}
    \begin{tabular}{c | l | c | l}
    	\toprule
			\multicolumn{2}{l |}{$\cC $}      & $\Inv(\cZ(\cC))$ & Action of bimodules (when non-trivial)     \\
	\midrule
	            $\ad(A_N^{(n)})$   &  $N=3$ & $\{\mathbf{1}\boxtimes\mathbf{1}, \frac{ f^{(1)}\boxtimes f^{(1)} +S}{2}, \frac{f^{(1)}\boxtimes f^{(1)} -S}{2},f^{(2)}\boxtimes \mathbf{1} \} \cong \Z{2}\times \Z{2}$  &  $A_3^\text{odd}$ exchanges the objects $ \frac{f^{(1)}\boxtimes f^{(1)} +S}{2}$ and $\frac{f^{(1)}\boxtimes f^{(1)} -S}{2}$  \\	
	            &  $N \equiv 0 \pmod 2$ & $\{\mathbf{1}\boxtimes\mathbf{1} \}$&    \\	
	            &  $N \equiv 1 \pmod 2$ & $\{\mathbf{1}\boxtimes\mathbf{1},f^{(N-1)}\boxtimes\mathbf{1}  \}$&    \\	
	            \midrule
	            $\ad(D_{2N}^{(n,\pm)})$ & $N=2$ &$ \{\mathbf{1}\boxtimes\mathbf{1},\mathbf{1}\boxtimes P,\mathbf{1}\boxtimes Q,$ &  $D_4^\text{odd}$ applies $P\leftrightarrow Q$ to the second factor \\ 		
	                         		   &		&	$P\boxtimes\mathbf{1},P\boxtimes P,P\boxtimes Q,$	  &  $_{P\leftrightarrow Q}D_4^\text{odd}$ applies $P\leftrightarrow Q$ to the first factor \\	
			   	           &		&	\qquad\qquad$ Q\boxtimes\mathbf{1},Q\boxtimes P,Q\boxtimes Q\} \cong \Z{3}\times \Z{3}$    &  $_{P\leftrightarrow Q}D_4^\text{even}$ applies $P\leftrightarrow Q$ to both factors \\
				           & $N>2$ & $\{\mathbf{1}\boxtimes\mathbf{1} \}$ & \\
		\midrule
		$\ad(E_6^{(n,\pm)})$          &		&  $\{	   \mathbf{1}\boxtimes\mathbf{1},f^{(10)}\boxtimes\mathbf{1}    	 \}$    & \\
		\midrule
		$\ad(E_8^{(n,\pm)})$          &		& $\{       \mathbf{1}\boxtimes\mathbf{1}   \}$ &\\  
    	\bottomrule
    \end{tabular}
\caption{\label{tab:Inv} Invertible objects in the centre, and action by invertible bimodules}

\end{sidewaystable}
\subsection{Frobenius-Perron dimensions of simple objects in bimodule categories}\label{sub:dims}

Recall we only care about extensions of $\otimes$-generated by an object of Frobenius-Perron dimension less than 2. We compute the Frobenius-Perron dimensions of the objects in each of our bimodule categories, as this will allow us to rule out many extensions that can not be $\otimes$-generated by such an object, and thus disqualify certain cyclic homomorphisms into the Brauer-Picard group. As twisting a bimodule by a monoidal auto-equivalence of the underlying category doesn't change the Frobenius-Perron dimensions of the objects, we only include the Frobenius-Perron dimensions of the untwisted bimodules.

\begin{trivlist}\leftskip=2em
\item\textbf{Dimensions in the $A$ series:}

Let $q = e^\frac{\pi i}{N+1}$, then the Frobenius-Perron dimensions of the simple objects in the invertible bimodules (when they exist) over $\ad(A_N^{(n)})$ are:

\begin{center}
\begin{tabular}{c | c}
    	\toprule
			Bimodule   & Dimensions of simples     \\
	\midrule
	$A_N^\text{even}$ & $\{ [2n-1]_q : 1 < n < \lceil\frac{N}{2}\rceil \}$ \\
	 $A_N^\text{odd}$ & $\{ [2n]_q : 1 < n < \lfloor\frac{N}{2}\rfloor \} $ \\
	 $D_{\frac{N+1}{2}+1}^\text{even} $&  $\{ \sqrt{2} [2n-1]_q : 1 < n \leq \frac{N+1}{4}  \} $ \\
	 $D_{\frac{N+1}{2}+1}^\text{odd}$ & $\{ \sqrt{2} [2n]_q : 1 < n < \frac{N+1}{4}  \}\bigcup \{ \sqrt{2}[\frac{N+1}{2}]_q \} $.           
    \end{tabular}
\end{center}
\vspace{1em}
\item\textbf{Dimensions in the $D$ series:}

Let $q = e^\frac{\pi i}{4N-2}$, then the Frobenius-Perron dimensions of the simple objects in the invertible bimodules over $\ad(D_{2N}^{(n)})$ (when they exist) are approximately:
\begin{center}
\begin{tabular}{c | c}
    	\toprule
			Bimodule   &  Dimensions of simples     \\
	\midrule
	$D_{2N}^\text{even}$ & $ \{ [2n-1]_q : 1 < n < N \} \bigcup \{ \frac{[2N-1]_q}{2}\} $ \\
	$D_{2N}^\text{odd}$ & $\{ [2n]_q : 1 < n < N \}$\\
	$E_7^\text{even}$ &  $\{1.96962, 3.70167, 4.98724, 5.67128\}$ \\
	$E_7^\text{odd}$ & $\{2.53209,3.87939, 7.29086\}$.
    \end{tabular}
\end{center}
\vspace{1em}
\item\textbf{Dimensions in the $E$ series:}

The Frobenius-Perron dimensions of the simple objects in the invertible bimodules over $\ad(E_6^{(n,\pm)})$ are approximately:
\begin{center}
\begin{tabular}{c | c}
    	\toprule
			Bimodule   & Dimensions of simples     \\
	\midrule
	$E_6^\text{even}$ & $\{1, 2.73205\}$ \\
	$E_6^\text{odd}$ & $\{ 1.93185 \}$.
    \end{tabular}
\end{center}

The Frobenius-dimensions of the simple objects in the invertible bimodules over $\ad(E_8^{(n,\pm)})$ are approximately:
\begin{center}
\begin{tabular}{c | c}
    	\toprule
			Bimodule  & Dimensions of simples     \\
	\midrule
	$E_8^\text{even}$ & $\{1,1.61803,2.9563,4.78339\}$ \\
	$E_8^\text{odd}$ & $\{1.98904,2.40487,3.21834,3.89116\}$.
    \end{tabular}
\end{center}
\end{trivlist}

\section{Classification of Cyclic Extensions}\label{sec:class}
We are now in place to begin classifying fusion categories as in the statement of Theorem~\ref{thm:main}. By Theorem~\ref{thm:ADEext} such categories must be cyclic extensions of a category of adjoint $ADE$ type. Thus in this section we compute cyclic extensions of the adjoint subcategories of the $ADE$ fusion categories, that are $\otimes$-generated by an object of Frobenius-Perron dimension less than 2. 

Our proofs all follow the same outline. First we begin by classifying cyclic homomorphisms into the Brauer-Picard group of each category. As we only care about extensions that are $\otimes$-generated by an object of Frobenius-Perron dimension less than 2, we are able to rule out many of these cyclic homomorphisms simply by considering the Frobenius-Perron dimensions of the objects in the invertible bimodules.

Next we use the classification theory of graded extensions to count an upper bound for the number of possible extensions corresponding to each homomorphism. As we only care about extensions up to twisting the associator, we have from the extension theory that an upper bound is given by the cohomology group
\[  H^2(\Z{M}, \Inv(\cZ(\cC))),\]
where $\Z{M}$ acts on the group $\Inv(\cZ(\cC))$ by the specified homomorphism $\Z{M} \to \BrPic(\cC)$. Computing this group in each case is a straightforward exercise in group cohomology. 

We then construct extensions of $\cC$ to realise the upper bound. We are extremely lucky in that each of these upper bounds turns out to be sharp. For the most part these constructions are straightforward, just involving Deligne products and de-equivariantizations of well known categories. However in the adjoint $A_7$ and adjoint $D_{10}$ cases we find interesting extensions realised as quantum subgroups of certain quantum group categories.

In several cases we run into the problem that categories which are inequivalent as $\Z{M}$-graded extensions over $\cC$, can be monoidally equivalent when just considered as monoidal categories. This phenomenon only occurs in the adjoint $A_7$, $D_{4}$, and $D_{10}$ cases. For these cases, we apply the techniques developed in \cite{1711.00645} to determine which inequivalent extensions are equivalent as monooidal categories.

We proceed case by case for each category of adjoint $ADE$ type. Instead of the standard lexicographic order, we instead order these cases by difficulty of proof. 

\subsection*{Cyclic extensions of categories of adjoint $A_{2N}$ type}
By far the easiest cases are the categories $\ad(A_{2N}^{(n)})$. This is due to the fact that the Brauer-Picard group is trivial, and there are no non-trivial invertible objects in the centre. Thus we begin our classifications with this case.
\begin{lemma}\label{lem:aeven}
Fix $n\in \Z{2N+1}^\times / \{\pm\}$. If $\cC$ is a $\Z{M}$-graded extension of $\ad(A_{2N}^{(n)})$, $\otimes$-generated by an object of Frobenius-Perron dimension less than 2, then, up to twisting the associator of $\cC$ by an element of $H^3(\Z{M}, \mathbb{C}^\times)$, the category $\cC$ is monoidally equivalent to 
\[
 \ad(A_{2N}^{(n)}) \boxtimes \operatorname{Vec}(\Z{M}).
\]
\end{lemma}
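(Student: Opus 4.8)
The plan is to apply the extension theory of \cite{MR2677836} directly, exploiting the fact that $\ad(A_{2N}^{(n)})$ has both trivial Brauer-Picard group and trivial group of invertible objects in its centre (as recorded in Table~\ref{table:bim} and Table~\ref{tab:Inv}, since $2N$ is even). First I would observe that any $\Z{M}$-graded extension $\cC$ of $\ad(A_{2N}^{(n)})$ corresponds to a homomorphism $c:\Z{M}\to \BrPic(\ad(A_{2N}^{(n)}))$, and since this Brauer-Picard group is trivial, the only such homomorphism is the trivial one. Consequently the obstruction $o_3(c)\in H^3(\Z{M},\Inv(\cZ(\ad(A_{2N}^{(n)}))))$ automatically vanishes (the coefficient group is trivial), so there is no obstruction to realising this homomorphism, and an extension exists.

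Next I would count the extensions realising $c$ up to twisting the associator. By the extension theory, the number of such extensions is bounded above by $|H^2(\Z{M},\Inv(\cZ(\ad(A_{2N}^{(n)}))))|$. Since $\Inv(\cZ(\ad(A_{2N}^{(n)})))$ is the trivial group for $2N$ even, this cohomology group is trivial, so there is \emph{exactly one} $\Z{M}$-graded extension of $\ad(A_{2N}^{(n)})$ realising the trivial homomorphism, up to twisting the associator. It therefore suffices to exhibit one such extension: the category $\ad(A_{2N}^{(n)})\boxtimes \Vec(\Z{M})$ is manifestly a $\Z{M}$-graded extension of $\ad(A_{2N}^{(n)})$ (the grading coming from the $\Vec(\Z{M})$ factor, with trivially graded piece $\ad(A_{2N}^{(n)})\boxtimes\Vec$), so this must be the unique one up to associator twist and up to equivalence of extensions. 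Finally, since an extension realising $c$ is determined as a $\cC$-bimodule category by $\bigoplus_{n\in\Z{M}} c(n) = \bigoplus_{n\in\Z{M}} \ad(A_{2N}^{(n)})$, every simple object of such an extension has Frobenius-Perron dimension equal to that of some simple object of $\ad(A_{2N}^{(n)})$, each of which is less than $2$; hence the constraint that $\cC$ be $\otimes$-generated by an object of Frobenius-Perron dimension less than $2$ is automatically satisfied and imposes no further restriction.

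The one subtlety to address is the passage from equivalence of extensions to plain monoidal equivalence: a priori the classification of \cite{MR2677836} only classifies up to equivalence of $\Z{M}$-graded extensions. However, by the refinement of \cite{1711.00645}, to obtain a representative of each monoidal equivalence class one only needs to consider homomorphisms $c$ up to post-composition by inner automorphisms of $\BrPic$ and 2-cocycles up to the induced action; since both $\BrPic$ and $\Inv(\cZ)$ are trivial here, there is nothing to quotient by, so equivalence of extensions coincides with monoidal equivalence in this case. This gives the result. I expect no genuine obstacle in this lemma — it is the degenerate base case where all the relevant invariants vanish — the only thing requiring a moment's care is citing the correct tables to confirm the triviality of $\BrPic$ and $\Inv(\cZ)$ for the \emph{even} subscript $A_{2N}$, as opposed to the odd case $A_{2N+1}$ where $\Inv(\cZ)$ is nontrivial.
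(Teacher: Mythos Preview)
Your proposal is correct and follows essentially the same approach as the paper: both arguments observe that $\BrPic(\ad(A_{2N}^{(n)}))$ is trivial (forcing the unique homomorphism $1\mapsto A_{2N}^\text{even}$), that $\Inv(\cZ(\ad(A_{2N}^{(n)})))$ is trivial (forcing $H^2$ to vanish), and then exhibit $\ad(A_{2N}^{(n)})\boxtimes\Vec(\Z{M})$ as the unique extension up to associator twist. Your version is more thorough --- explicitly noting the vanishing of the obstruction $o_3$, verifying that the Frobenius-Perron constraint is automatically satisfied, and addressing the passage from equivalence of extensions to monoidal equivalence via \cite{1711.00645} --- whereas the paper simply omits these points as trivial in this degenerate case.
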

\begin{proof}
We begin by classifying group homomorphisms $\Z{M} \to \BrPic(\ad(A_{2N}^{(n)}))$. As the Brauer-Picard group of $\ad(A_{2N}^{(n)})$ is trivial the only homomorphism $\Z{M} \to \BrPic(\ad(A_{2N}^{(n)}))$ is the map $1 \mapsto A^\text{even}_{2N}$.

From Table~\ref{tab:Inv} we know that $\cZ(\ad(A_{2N}^{(n)}))$ has no invertible objects. Therefore the group
\[ H^2(\Z{M},\Inv(\cZ(\ad(A_{2N}))))\] must be trivial for all $M$. Hence, up to twisting the associator by an element of $H^3(\Z{M}, \mathbb{C}^\times)$, there is a unique $\Z{M}$-graded extension of $\ad(A_{2N}^{(n)})$. This extension is realised by the category $ \ad(A_{2N}^{(n)}) \boxtimes \operatorname{Vec}(\Z{M})$.
\end{proof}

\subsection*{Cyclic extensions of the categories of adjoint $E_8$ type}
The adjoint $E_8$ case is just slightly more complicated that the adjoint $A_{2N}$ case. While there are no non-trivial invertible elements in the centre of $\ad(E_8^{(n,+)})$, we have now that the Brauer-Picard group is non-trivial. However only one of the bimodule categories over $\ad(E_8^{(n,+)})$ contains an object of the correct dimension to $\otimes$-generate a cyclic extension of $\ad(E_8^{(n,+)})$. Thus, up to twisting the associator, there is a unique $\Z{M}$-graded extension of $\ad(E_8^{(n,+)})$, $\otimes$-generated by an object of Frobenius-Perron dimension less than 2. This extension is easy to construct, so we can jump straight into the classification result of this subsection.

\begin{lemma}\label{lem:e8}
Fix $n\in \Z{30}^\times / \{\pm \}$. If $\cC$ is a $\Z{M}$-graded extension of $\ad(E_8^{(n,\pm)})$, $\otimes$-generated by an object of Frobenius-Perron dimension less than 2, then $M$ is even, and, up to twisting the associator of $\cC$ by an element of $H^3(\Z{M}, \mathbb{C}^\times)$, the category $\cC$ is monoidally equivalent to: 
\[   \subcat{E_8^{(n,\pm)} \boxtimes \Vec(\Z{M})}.\]
\end{lemma}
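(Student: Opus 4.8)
The plan is to follow the standard outline laid out at the start of Section~\ref{sec:class}: first classify the cyclic homomorphisms $c:\Z{M}\to\BrPic(\ad(E_8^{(n,\pm)}))$ that can possibly be realised by an extension $\otimes$-generated by an object of Frobenius-Perron dimension less than $2$; then bound the number of extensions realising each such $c$ using $H^2(\Z{M},\Inv(\cZ(\ad(E_8^{(n,\pm)}))))$; and finally exhibit an explicit extension meeting the bound. From Table~\ref{table:bim}, $\BrPic(\ad(E_8^{(n,\pm)}))=\{E_8^{\text{even}},E_8^{\text{odd}}\}\cong\Z{2}$, with $E_8^{\text{even}}$ the identity bimodule of order $1$ and $E_8^{\text{odd}}$ of order $2$. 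Hence a homomorphism $c:\Z{M}\to\Z{2}$ is either trivial or (when $M$ is even) the surjection with kernel $2\Z{M}$.

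First I would rule out the trivial homomorphism together with any homomorphism whose image misses $E_8^{\text{odd}}$: by the extension theory recalled in the excerpt, an extension realising $c$ is equivalent as a $\cC$-bimodule to $\bigoplus_{n\in\Z{M}}c(n)$, so its simple objects are exactly the simples of the bimodules $c(n)$, and in particular the $\otimes$-generating object of smallest dimension in a non-trivially graded piece must be a simple object of some bimodule appearing in the image of $c$. The simples of $E_8^{\text{even}}$ other than the unit have dimensions $\{1.61803,2.9563,4.78339\}$ from Subsection~\ref{sub:dims} — so the only invertible objects of $\ad(E_8^{(n,\pm)})$-bimodules in $E_8^{\text{even}}$ lie inside $\ad(E_8^{(n,\pm)})$ itself — while $E_8^{\text{odd}}$ contains the simple of dimension $1.98904<2$. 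Thus no $\Z{M}$-graded extension $\otimes$-generated by an object of dimension less than $2$ can have $c$ with image $\{E_8^{\text{even}}\}$ (such an extension would have to be $\ad(E_8^{(n,\pm)})\boxtimes\Vec(\Z{M})$ itself, whose generating object in a nontrivial graded piece has dimension $\geq1.61803$ but no generator of dimension less than $2$ generates the whole category — more precisely, the generating object would have to be the dimension-$1$ object, which generates only a pointed subcategory). Hence $M$ must be even and $c$ must be the surjection $\Z{M}\twoheadrightarrow\Z{2}$ with $c(1)=E_8^{\text{odd}}$.

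Next, from Table~\ref{tab:Inv}, $\Inv(\cZ(\ad(E_8^{(n,\pm)})))=\{\mathbf{1}\boxtimes\mathbf{1}\}$ is trivial, so $H^2(\Z{M},\Inv(\cZ(\ad(E_8^{(n,\pm)}))))$ is trivial, and consequently there is at most one $\Z{M}$-graded extension of $\ad(E_8^{(n,\pm)})$ realising $c$, up to twisting the associator by an element of $H^3(\Z{M},\mathbb{C}^\times)$ and up to equivalence of extensions (the obstruction $o_3(c)$ lies in $H^3(\Z{M},\Inv(\cZ(\cC)))=0$ as well, so $c$ really is realised). It remains to produce a concrete extension. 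The category $E_8^{(n,\pm)}$ is itself a $\Z{2}$-graded extension of $\ad(E_8^{(n,\pm)})$, so by the construction of Definition~\ref{def:subcat} and the preceding lemma, $\subcat{E_8^{(n,\pm)}\boxtimes\Vec(\Z{M})}$ is a $\Z{L}$-graded extension of $\ad(E_8^{(n,\pm)})$ with $L=\operatorname{lcm}(2,M)=M$; its non-trivially-graded pieces contain the object $W\boxtimes k$ (dimension $1.98904<2$) which $\otimes$-generates the whole category, and its associated homomorphism into $\BrPic$ is exactly $c$. By uniqueness, any extension as in the statement is monoidally equivalent, up to the cocycle twist, to $\subcat{E_8^{(n,\pm)}\boxtimes\Vec(\Z{M})}$, which completes the proof.

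The main obstacle is the first step: carefully arguing that the trivial homomorphism cannot be realised by a category $\otimes$-generated by an object of dimension less than $2$. This requires checking that within $E_8^{\text{even}}$ the only objects of dimension less than $2$ that could serve as generators are already accounted for inside $\ad(E_8^{(n,\pm)})\boxtimes\Vec(\Z{M})$, and that such objects generate only a proper (pointed) subcategory rather than the full graded category — so that the only surviving homomorphism is the surjection onto $\Z{2}$, forcing $M$ even. The cohomological bookkeeping and the explicit construction are then routine given Subsection~\ref{sub:dims}, Table~\ref{tab:Inv}, Table~\ref{table:bim}, and Lemma~\ref{lem:twtwt}.
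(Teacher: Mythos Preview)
Your overall strategy matches the paper's proof exactly: restrict the homomorphism $c:\Z{M}\to\BrPic(\ad(E_8^{(n,\pm)}))\cong\Z{2}$, observe that $\Inv(\cZ(\ad(E_8^{(n,\pm)})))$ is trivial so $H^2$ vanishes, and realise the unique extension by $\subcat{E_8^{(n,\pm)}\boxtimes\Vec(\Z{M})}$. The cohomological bookkeeping and construction are fine.

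However, your argument ruling out the trivial homomorphism has a genuine error. You write that if $c$ has image $\{E_8^{\text{even}}\}$ then ``the generating object would have to be the dimension-$1$ object, which generates only a pointed subcategory''. This is false: the bimodule $E_8^{\text{even}}$ contains a non-invertible simple of dimension $\frac{1+\sqrt{5}}{2}\approx 1.618<2$, so there \emph{is} a candidate non-invertible generator of dimension less than $2$. Your final paragraph acknowledges the difficulty but proposes the same incorrect resolution (``such objects generate only a proper (pointed) subcategory''). The object of dimension $\frac{1+\sqrt{5}}{2}$ is not invertible and does not generate a pointed subcategory.

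The paper's argument at this step is different and correct: the subcategory $\otimes$-generated by that object of dimension $\frac{1+\sqrt{5}}{2}$ is, by Theorem~\ref{thm:ADEext}, a cyclic extension of a category of adjoint $A_4$ type (since $\frac{1+\sqrt{5}}{2}=2\cos(\pi/5)$). Hence it cannot be all of $\cC$, because $\ad(\cC)=\ad(E_8^{(n,\pm)})$, which is not of adjoint $A_4$ type. This is what forces $c(1)=E_8^{\text{odd}}$ and $M$ even.

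A minor point: the object of dimension $1.98904$ in $E_8^{\text{odd}}$ is $f^{(1)}$, not $W$; the generating object of $\subcat{E_8^{(n,\pm)}\boxtimes\Vec(\Z{M})}$ is $f^{(1)}\boxtimes 1$.
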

\begin{proof}
Recall that the Brauer-Picard group of $\ad(E_8^{(n,\pm)})$ is $\Z{2}$. The only object in the trivial bimodule $E_8^\text{even}$ with dimension less than $2$ has dimension $\frac{1+\sqrt{5}}{2}$. Therefore the category generated by this object would be a cyclic extension of a category of adjoint $A_4$ type, and could not generate all of $\cC$, as $\cC$ contains an $\ad(E_8^{(n,\pm)})$ subcategory. Thus we can assume $M$ is even and the homomorphism $\Z{M} \to \BrPic(\ad(E_8^{(n,\pm)}))$ is defined by $1 \mapsto E_8^\text{odd}$. From Table~\ref{tab:Inv} there are no non-trivial invertible elements in the centre of $\ad(E_8^{(n,\pm)})$. Thus 
\[ H^2(\Z{M}, \Inv(\cZ(\ad(E_8^{(n,\pm)})))) = \{e\},\] 
and so, up to twisting the associator by an element of $H^3(\Z{M}, \mathbb{C}^\times)$, there is a unique $\Z{M}$-graded extension of $\ad(E_8^{(n,\pm)})$ $\otimes$-generated by an object of Frobenius-Perron dimension less than 2. This extension is realised by the category
\[   \subcat{E_8^{(n,\pm)} \boxtimes \Vec(\Z{M})}.\]
\end{proof}

\subsection*{Cyclic extensions of categories of adjoint $D_{2N}$ type, $N \neq \{2,5\}$}
Assuming that $N \notin \{2,5\}$, the categories $\ad(D_{2N}^{(n)})$ have no non-trivial invertible objects in the centre, and Brauer-Picard group isomorphic to $\Z{2}\times \Z{2}$. Only two of these bimodules contain an object that could tensor generate a cyclic extension of $\ad(D_{2N}^{(n)})$, both of which have order 2. Thus there are no $\Z{2M+1}$-graded extensions of $\ad(D_{2N}^{(n)})$, $\otimes$-generated by an object of Frobenius-Perron dimension less than 2, and, up to twisting the associator, at most two $\Z{2M}$-graded extensions of $\ad(D_{2N}^{(n)})$, $\otimes$-generated by an object of Frobenius-Perron dimension less than 2. These two extensions exist, and are realised by the categories
\[  \subcat{  D_{2N}^{(n,+)} \boxtimes \Vec(\Z{2M})} \text{ and }   \subcat{  D_{2N}^{(n,-)} \boxtimes \Vec(\Z{2M})}.  \]
While both these categories are clearly $\Z{2M}$-graded extensions of $\ad(D_{2N}^{(n)})$, the difficulty in this case comes in showing that these two categories are non-equivalent, even up to twisting the associator by an element of $H^3(\Z{2M},\mathbb{C}^\times)$.

\begin{lemma}\label{cor:d2ndiff}
Fix $n\in \Z{4N-2}^\times / \{\pm \}$. The categories \[    D_{2N}^{(n,+)} \text{ and }  D_{2N}^{(n,-)}  \] are monoidally non-equivalent, even up to twisting the associator by an element of $H^3(\Z{2},\mathbb{C}^\times)$.
\end{lemma}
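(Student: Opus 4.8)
The plan is to distinguish the two categories $D_{2N}^{(n,+)}$ and $D_{2N}^{(n,-)}$ by a monoidal invariant that is insensitive to twisting the associator by a $3$-cocycle on the grading group $\Z{2}$. Since both categories have the same fusion rules (all categories of $D_{2N}$ type do) and the same adjoint subcategory $\ad(D_{2N}^{(n)})$, an ordinary fusion-rule or dimension argument cannot work; I would instead look at the Drinfeld centre, or more precisely at an invariant extracted from the modular data of $\cZ(D_{2N}^{(n,\pm)})$ restricted to the part coming from the $\Z{2}$-grading, because twisting by $\omega\in H^3(\Z{2},\CC^\times)$ changes the centre in a controlled way that I can track. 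Concretely, recall from the definition in Subsection~\ref{sub:ADE} that $D_{2N}^{(n,\pm)}$ is the $D_{2N}$ planar algebra with rotational eigenvalue $\pm i$ on the object $S$ (in the notation there, $P$ and $Q$), so the sign $\pm$ is detected by a rotational/twist-type eigenvalue on the two ``extra'' simples $P,Q$ lying in the nontrivially graded piece.

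First I would record the relevant structural facts: $D_{2N}^{(n,\pm)}$ is a $\Z{2}$-graded extension of $\ad(D_{2N}^{(n)})$, the objects $P,Q$ lie in the odd part, and the invariant distinguishing the two sign choices is the value of a canonical endomorphism (the ``rotation by $2\pi$'', equivalently the pivotal/spherical structure evaluated against the half-braiding data, or equivalently the relevant component of the $T$-matrix of $\cZ$) on $P\oplus Q$, which is $\pm i$ by construction. Second, I would show this invariant is preserved under monoidal equivalence: any monoidal equivalence $D_{2N}^{(n,+)}\simeq D_{2N}^{(n,-)^\omega}$ restricts to a monoidal autoequivalence of the adjoint subcategory (by Definition~\ref{def:subcat} and the remark that $\ad(\cC)=\ad(\subcat{\cC})$), and by Table~\ref{table:bim} the only nontrivial monoidal autoequivalence of $\ad(D_{2N}^{(n)})$ for $N\neq\{2,5\}$ is the one exchanging $P\leftrightarrow Q$; hence the pair $\{P,Q\}$ is preserved setwise, and the invariant, being defined on $P\oplus Q$, is untouched. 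Third, I would check that cocycle-twisting by $\omega\in H^3(\Z{2},\CC^\times)$ does not change this invariant: twisting only rescales associativity constraints in the odd-odd-odd direction and does not affect the pivotal structure or the rotational eigenvalue on a simple object of the odd part — this is the step where I must be careful, since a priori a $3$-cocycle twist could interact with the spherical structure, so I would phrase the invariant so that it manifestly depends only on data (such as $\dim\Hom(P\otimes Q \to \mathbf 1)$ together with a rotation operator on that hom-space, or the corresponding eigenvalue in $\cZ$) that is cocycle-invariant.

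The main obstacle, as in the third step above, is pinning down an invariant that is simultaneously (a) computable and equal to $\pm i$ for the two categories, (b) a monoidal invariant, and (c) invariant under $H^3(\Z{2},\CC^\times)$-twisting. The cleanest route is probably to pass entirely to $\cZ(D_{2N}^{(n,\pm)})$: graded extensions that differ by a $3$-cocycle twist have Drinfeld centres related by a known isomorphism (they are Witt-equivalent, and in fact for a $\Z2$-twist the centre changes by tensoring/gauging with a pointed piece in a way that I can make explicit), so I would identify a twist eigenvalue in the centre — attached to the image of $P$ or $Q$ — that survives this isomorphism, then observe its value is $\pm i$ by the planar-algebra description. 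I expect this to go through because the data of $\cite{MR3808052}$ already computes $\cZ(\ad(D_{2N}^{(n)}))$ explicitly, and the two sign choices correspond to the two distinct ways the half-braiding on the odd objects can be completed, which are genuinely different modular categories; a short case-check using that computed centre should close the argument.
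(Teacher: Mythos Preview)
Your approach is substantially more complicated than needed, and contains some concrete errors that would block it from going through as written.

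\textbf{The paper's argument.} The paper observes that $H^3(\Z{2},\mathbb{C}^\times)\cong\Z{2}$, so there is exactly one nontrivial cocycle twist to check. It then identifies that twist explicitly: twisting $D_{2N}^{(n,+)}$ by the nontrivial cocycle yields $D_{2N}^{(-n,+)}$, i.e.\ the twist changes $n$ to $-n$ and preserves the sign. Since the classification in Subsection~\ref{sub:ADE} already tells us that $D_{2N}^{(-n,+)}$ and $D_{2N}^{(n,-)}$ are monoidally inequivalent, the result follows in one line. There is no need to build a bespoke cocycle-invariant.

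\textbf{Problems with your proposal.} First, you have a factual confusion: $P$ and $Q$ lie in the \emph{even} (adjoint) part of $D_{2N}^{(n,\pm)}$, not the odd part; you even use this implicitly when you cite the $P\leftrightarrow Q$ autoequivalence of $\ad(D_{2N}^{(n)})$. Relatedly, the $S$ in the planar-algebra description is a \emph{morphism} (a rotational eigenvector in a hom-space built from $f^{(1)}$), not an object, and certainly not $P$ or $Q$; so your proposed invariant ``rotational eigenvalue on $P\oplus Q$'' is not well-posed. Second, your step (c), showing that the invariant is unchanged by the $H^3(\Z{2},\mathbb{C}^\times)$-twist, is exactly the hard part, and you acknowledge not having resolved it. The cleanest way to resolve it is precisely to compute what the twist does to the category, and once you do that you have the paper's proof and no longer need the invariant. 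Third, your argument as sketched only covers $N\notin\{2,5\}$, whereas the lemma is stated for all $N$.

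If you want to salvage the invariant-based route, the correct object to look at is a higher Frobenius--Schur indicator of $f^{(1)}$ (compare the computation $\nu_{4N+4}(f^{(1)})=\delta i$ in the proof of Lemma~\ref{lem:D10ext}), not anything attached to $P,Q$; but even then, verifying its cocycle-invariance reduces to the same computation the paper does directly.
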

\begin{proof}
Twisting the associator of $D_{2N}^{(n,+)}$ by the non-trivial element of $H^3( \Z{2} , \mathbb{C}^\times)$ gives the category $D_{2N}^{(-n,+)}$. Thus, as the categories $D_{2N}^{(-n,+)}$ and $D_{2N}^{(n,-)}$ are monoidally non-equivalent, the categories $D_{2N}^{(n,+)}$ and $D_{2N}^{(n,-)}$ are monoidally non-equivalent, even up to twisting the associator.
\end{proof}

\begin{cor}\label{lem:d2ndiff}
Fix $n\in \Z{4N-2}^\times / \{\pm \}$. The categories 
\[  \subcat{  D_{2N}^{(n,+)} \boxtimes \Vec(\Z{2M})} \text{ and }   \subcat{  D_{2N}^{(n,-)} \boxtimes \Vec(\Z{2M})},  \]
 are monoidally non-equivalent, even up to twisting the associator by an element of $H^3(\Z{2M},\mathbb{C}^\times)$.
\end{cor}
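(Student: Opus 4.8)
The plan is to bootstrap from Lemma~\ref{cor:d2ndiff} via the bimodule-category description of cyclic extensions. Set $\cC:=\ad(D_{2N}^{(n)})$. Since $\ad(\subcat{D_{2N}^{(n,\pm)}\boxtimes\Vec(\Z{2M})})=\ad(D_{2N}^{(n,\pm)}\boxtimes\Vec(\Z{2M}))=\ad(D_{2N}^{(n,\pm)})=\cC$, both $\subcat{D_{2N}^{(n,+)}\boxtimes\Vec(\Z{2M})}$ and $\subcat{D_{2N}^{(n,-)}\boxtimes\Vec(\Z{2M})}$ are $\Z{2M}$-graded extensions of $\cC$. The first thing I would verify is that this $\Z{2M}$-grading is the \emph{universal} grading of each: it is faithful, and its trivially graded component is the adjoint subcategory $\cC$, as just noted. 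Because twisting the associator by an element of $H^{3}(\Z{2M},\mathbb{C}^\times)$ changes neither the objects, the fusion rules, nor the grading, the same holds for $(\subcat{D_{2N}^{(n,-)}\boxtimes\Vec(\Z{2M})})^{\omega}$.

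Next, suppose $F\colon\subcat{D_{2N}^{(n,+)}\boxtimes\Vec(\Z{2M})}\to(\subcat{D_{2N}^{(n,-)}\boxtimes\Vec(\Z{2M})})^{\omega}$ were a monoidal equivalence for some $\omega\in H^{3}(\Z{2M},\mathbb{C}^\times)$. Being monoidal, $F$ preserves universal gradings, so it carries the degree-$g$ component to the degree-$\phi(g)$ component for some $\phi\in\Aut(\Z{2M})$ and restricts on the trivial components to a monoidal autoequivalence $\mathcal{F}$ of $\cC$. By \cite{MR2677836}, $\subcat{D_{2N}^{(n,\pm)}\boxtimes\Vec(\Z{2M})}$ corresponds to the homomorphism $c_{\pm}\colon\Z{2M}\to\BrPic(\cC)$ sending $1$ to the odd part $\cM_{\pm}$ of $D_{2N}^{(n,\pm)}$ (this is visible from the Deligne-product realisation), and, as twisting the associator does not change the classifying homomorphism, $(\subcat{D_{2N}^{(n,-)}\boxtimes\Vec(\Z{2M})})^{\omega}$ also corresponds to $c_{-}$. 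An extension corresponding to a homomorphism $c$ is $\bigoplus_{g}c(g)$ as a $\cC$-bimodule category, so transporting this structure through $F$ identifies $c_{+}(g)$ with the $\mathcal{F}$-conjugate of $c_{-}(\phi(g))$ in $\BrPic(\cC)$. Since $\BrPic(\cC)\cong\Z{2}\times\Z{2}$ (Table~\ref{tab:Inv}, using $N\ne 2,5$) is abelian, conjugation by $\mathcal{F}$ is trivial, and as $\phi$ preserves parity, taking $g$ odd gives $\cM_{+}\cong\cM_{-}$.

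It then remains to close with a contradiction. If $\cM_{+}\cong\cM_{-}$, then $D_{2N}^{(n,+)}$ and $D_{2N}^{(n,-)}$ are $\Z{2}$-graded extensions of $\cC$ realising the same homomorphism $\Z{2}\to\BrPic(\cC)$; since $\Inv(\cZ(\cC))=\{e\}$ (Table~\ref{tab:Inv}), the group $H^{2}(\Z{2},\Inv(\cZ(\cC)))$ is trivial, so \cite{MR2677836} yields a unique such extension up to twisting the associator. Hence $D_{2N}^{(n,+)}\simeq(D_{2N}^{(n,-)})^{\omega_{0}}$ for some $\omega_{0}\in H^{3}(\Z{2},\mathbb{C}^\times)$, contradicting Lemma~\ref{cor:d2ndiff}; so no such $F$ exists.

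I expect the main obstacle to be the second paragraph: making precise that a monoidal equivalence of these $\Z{2M}$-graded extensions identifies the underlying $\cC$-bimodule categories up to conjugation by the induced autoequivalence of $\cC$ — hence, as $\BrPic(\cC)$ is abelian, up to nothing — and that twisting the associator by an element of $H^{3}(\Z{2M},\mathbb{C}^\times)$ leaves both the universal grading and the classifying homomorphism $c_{-}$ intact.
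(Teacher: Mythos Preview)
Your argument is correct for $N\notin\{2,5\}$, but it takes a substantially longer route than the paper and imposes a restriction on $N$ that the corollary cannot afford.

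The paper's proof is a direct restriction. It observes that $D_{2N}^{(n,\pm)}$ embeds monoidally into $\subcat{D_{2N}^{(n,\pm)}\boxtimes\Vec(\Z{2M})}$ with image precisely the subcategory supported in grades $\{0,M\}\subset\Z{2M}$. Any monoidal equivalence between the two ambient categories (possibly after twisting by $\omega$) respects the universal $\Z{2M}$-grading up to an automorphism, and every automorphism of $\Z{2M}$ fixes the unique order-two subgroup $\{0,M\}$ setwise; hence the equivalence restricts to these subcategories. Pulling back $\omega$ along $\Z{2}\hookrightarrow\Z{2M}$ then gives $D_{2N}^{(n,+)}\simeq (D_{2N}^{(n,-)})^{\omega_0}$ for some $\omega_0\in H^3(\Z{2},\mathbb{C}^\times)$, contradicting Lemma~\ref{cor:d2ndiff}. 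No information about $\BrPic(\cC)$ or $\Inv(\cZ(\cC))$ is used, so this works uniformly in $N$.

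By contrast, you pass through the classifying homomorphisms $c_\pm$, use that $\BrPic(\cC)$ is abelian to kill the conjugation ambiguity, deduce $\cM_+=\cM_-$, and then re-apply the extension classification (with $H^2$ trivial) to force $D_{2N}^{(n,+)}\simeq(D_{2N}^{(n,-)})^{\omega_0}$. This is logically sound where it applies, but your explicit restriction to $N\ne 2,5$ is a genuine gap: the corollary is stated for all $N$ and is invoked in Lemma~\ref{lem:d10} for $N=5$, where $\BrPic(\ad(D_{10}^{(n)}))\cong S_3\times S_3$ is non-abelian and your conjugation step does not go through as written. The paper's embedding argument avoids any case analysis on $N$ entirely.
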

\begin{proof}
There exist monoidal embeddings
\[ D_{2N}^{(n,\pm)}\to  \subcat{  D_{2N}^{(n,\pm)} \boxtimes \Vec(\Z{2M})}  \]
sending
\[     f^{(2m)} \to f^{(2m)}\boxtimes 0, \quad   f^{(2m+1)} \to f^{(2m+1)}\boxtimes m, \quad P \to P\boxtimes 0, \text{ and } \quad Q \to Q\boxtimes 0.\]

These monoidal embeddings preserve the graded structure. Hence, if the categories 
\[  \subcat{  D_{2N}^{(n,+)} \boxtimes \Vec(\Z{2M})} \text{ and }   \subcat{  D_{2N}^{(n,-)} \boxtimes \Vec(\Z{2M})} \]
were equivalent, up to twisting the associator, then the subcategories 
\[D_{2N}^{(n,+)} \text{ and } D_{2N}^{(n,-)} \]
would be equivalent, up to twisting the associator. However this is impossible by Lemma~\ref{cor:d2ndiff}.
\end{proof}

With Corollary~\ref{lem:d2ndiff} in hand, we can now prove the classification result for this subsection.

\begin{lemma}\label{lem:d2n}
Let $N \neq \{2,5\}$, and fix $n\in \Z{4N-2}^\times$. If $\cC$ is a $\Z{M}$-graded extension of $\ad(D_{2N}^{(n)})$, $\otimes$-generated by an object of Frobenius-Perron dimension less than 2, then $M$ is even, and, up to twisting the associator of $\cC$ by an element of $H^3(\Z{M}, \mathbb{C}^\times)$, the category $\cC$ is monoidally equivalent to one of: 
\[  \subcat{  D_{2N}^{(n,\pm)} \boxtimes \Vec(\Z{M})}.  \]
\end{lemma}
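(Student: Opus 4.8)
The plan is to follow the standard three-step outline used throughout Section~\ref{sec:class}: classify cyclic homomorphisms into $\BrPic(\ad(D_{2N}^{(n)}))$, bound the number of extensions realising each one via group cohomology, and then exhibit categories meeting the bound. By Table~\ref{table:bim}, for $N \notin \{2,5\}$ the group $\BrPic(\ad(D_{2N}^{(n)}))$ is $\Z{2}\times \Z{2}$, generated by the classes of $D_{2N}^\text{odd}$ and $_{P\leftrightarrow Q}D_{2N}^\text{even}$ (the twist $_{P\leftrightarrow Q}D_{2N}^\text{odd}$ being their product); the identity component is $D_{2N}^\text{even}$. A homomorphism $c:\Z{M}\to\BrPic(\ad(D_{2N}^{(n)}))$ is determined by $c(1)$, which must have order dividing $M$. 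If $\cC$ is $\otimes$-generated by an object of Frobenius-Perron dimension less than $2$, then by the remark following Theorem~\ref{thm:ADEext} the Frobenius-Perron dimensions of the simple objects in $\cC$ are determined by $c$, and in particular the non-trivially-graded pieces $c(k)$ for $k\neq 0$ must each contain an object of dimension less than $2$. First I would consult the dimension tables of Subsection~\ref{sub:dims}: among the bimodules over $\ad(D_{2N}^{(n)})$, only $D_{2N}^\text{even}$ and $D_{2N}^\text{odd}$ (up to twisting, which does not change dimensions) contain a simple of dimension less than $2$ — namely $[3]_q$ in $D_{2N}^\text{odd}$ and $[3]_q$ (or the half-dimensional object) in $D_{2N}^\text{even}$ — whereas $_{P\leftrightarrow Q}D_{2N}^\text{even}$ and its relatives have smallest simple of dimension $[3]_q$ too; I need to check carefully which twisted bimodules share the same dimension set. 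The upshot should be that the only viable $c(1)$ of order $2$ is the class of $D_{2N}^\text{odd}$, so $M$ must be even and $c$ is the homomorphism $1\mapsto [D_{2N}^\text{odd}]$.

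Next, since Table~\ref{tab:Inv} shows $\Inv(\cZ(\ad(D_{2N}^{(n)})))$ is trivial for $N > 2$ (and $N\neq 5$ is within that range), the obstruction $o_3(c)\in H^3(\Z{M},\Inv(\cZ))$ vanishes automatically and
\[
H^2(\Z{M},\Inv(\cZ(\ad(D_{2N}^{(n)})))) = \{e\}.
\]
Hence, up to twisting the associator by an element of $H^3(\Z{M},\mathbb{C}^\times)$, there is at most \emph{one} $\Z{M}$-graded extension of $\ad(D_{2N}^{(n)})$ realising $c$. At this point the subtlety of the case appears: there are in fact two monoidally inequivalent extensions, $\subcat{D_{2N}^{(n,+)}\boxtimes\Vec(\Z{M})}$ and $\subcat{D_{2N}^{(n,-)}\boxtimes\Vec(\Z{M})}$ (inequivalent by Corollary~\ref{lem:d2ndiff}), and the resolution is that these two are \emph{equivalent as $\Z{M}$-graded extensions of $\ad(D_{2N}^{(n)})$} even though not as monoidal categories — exactly the phenomenon flagged in the introduction and in the discussion preceding Definition~\ref{def:subcat}, and made precise via the refinement of \cite{1711.00645}: to pass from equivalence of extensions to monoidal equivalence one must quotient the homomorphism $c$ by post-composition with inner automorphisms of $\BrPic$ induced by the bimodules $_\mathcal{F}\ad(D_{2N}^{(n)})$. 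I would invoke the order-two auto-equivalence $P\leftrightarrow Q$ of $\ad(D_{2N}^{(n)})$: the bimodule $_{P\leftrightarrow Q}\ad(D_{2N}^{(n)})$ conjugates $c$ to itself (since $c$ lands in the central-by-construction piece), so it does not collapse the two, which is why we get genuinely two categories in the statement rather than one.

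Finally I would verify that both candidates are indeed extensions of the right shape. Each of $D_{2N}^{(n,\pm)}$ is a $\Z{2}$-graded extension of $\ad(D_{2N}^{(n)})$, so by the Lemma preceding Lemma~\ref{lem:twtwt}, $\subcat{D_{2N}^{(n,\pm)}\boxtimes\Vec(\Z{M})}$ is a $\Z{L}$-graded extension of $\ad(D_{2N}^{(n)})$ where $L = \operatorname{lcm}(2,M) = M$ (using that $M$ is even). One checks directly from the fusion rules that the $\otimes$-generating object of $\subcat{D_{2N}^{(n,\pm)}\boxtimes\Vec(\Z{M})}$ — the image of $f^{(1)}\boxtimes 1$ — has the same Frobenius-Perron dimension $[3]_q^{1/2}$-type value less than $2$ as in the underlying $D$-series category, so these categories satisfy the hypothesis. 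Since the bound from $H^2$ together with the $\BrPic$-inner-automorphism correction allows at most these two monoidal equivalence classes, and both are realised, the classification is complete. The main obstacle, I expect, is the first step: carefully ruling out the twisted-by-auto-equivalence bimodules $_{P\leftrightarrow Q}D_{2N}^\text{even}$ etc.\ as possible values of $c(1)$ by a Frobenius-Perron dimension argument, since twisting leaves dimensions unchanged and one must be sure these bimodules have no simple of dimension below $2$ — or, if they do, argue that the resulting extension cannot contain an $\ad(D_{2N}^{(n)})$ subcategory and hence cannot be $\otimes$-generated by a single small object, exactly as in the $E_8$ argument of Lemma~\ref{lem:e8}.
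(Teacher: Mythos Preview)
There is a genuine gap in the first step that propagates through and forces you into an incorrect resolution later. You conclude that ``the only viable $c(1)$ of order $2$ is the class of $D_{2N}^\text{odd}$'', but this is false: since twisting a bimodule by a monoidal auto-equivalence does not change the Frobenius-Perron dimensions of its objects, the bimodule $_{P\leftrightarrow Q}D_{2N}^\text{odd}$ contains exactly the same small object as $D_{2N}^\text{odd}$ does. So there are \emph{two} admissible homomorphisms $\Z{M}\to\BrPic(\ad(D_{2N}^{(n)}))$, namely $1\mapsto D_{2N}^\text{odd}$ and $1\mapsto {}_{P\leftrightarrow Q}D_{2N}^\text{odd}$. (The paper's proof handles $N=3$ separately because there the even bimodules also contain an object of dimension $\frac{1+\sqrt{5}}{2}<2$, which must be ruled out by the ``can't generate all of $\cC$'' argument as in Lemma~\ref{lem:e8}.) Since $\BrPic(\ad(D_{2N}^{(n)}))\cong\Z{2}\times\Z{2}$ is abelian, these two homomorphisms are \emph{not} identified by the inner-automorphism refinement of \cite{1711.00645}.

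With two homomorphisms and $H^2(\Z{M},\Inv(\cZ(\ad(D_{2N}^{(n)}))))=\{e\}$ you get an upper bound of two extensions, realised by $\subcat{D_{2N}^{(n,+)}\boxtimes\Vec(\Z{M})}$ and $\subcat{D_{2N}^{(n,-)}\boxtimes\Vec(\Z{M})}$, inequivalent by Corollary~\ref{lem:d2ndiff}. Your attempted rescue --- that the two categories are ``equivalent as $\Z{M}$-graded extensions even though not as monoidal categories'' --- is impossible in the direction you state it: an equivalence of extensions \emph{is} a monoidal equivalence (with extra constraints), so extension-equivalent categories are automatically monoidally equivalent. The phenomenon in the paper goes the other way (monoidally equivalent but inequivalent as extensions), and it does not occur here.
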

\begin{proof}
Recall that the Brauer-Picard group of $\ad(D_{2N}^{(n)})$ is $\Z{2}\times \Z{2}$. To determine possible homomorphisms $\Z{M} \to \BrPic(\ad(D_{2N}^{(n)}))$, that could possibly give rise to an extension, $\otimes$-generated by an object of Frobenius-Perron dimension less than 2, we have to split into cases.

\begin{trivlist}\leftskip=2em
\item \textbf{Case $N=3$:}

When $N=3$ the trivial bimodule $D_6^\text{even}$, and the twisted trivial bimodule $ _{P\leftrightarrow Q}D_6^\text{even}$ both contain non-trivial objects of dimension $\frac{1+\sqrt{5}}{2}$. However any category generated by such an object couldn't generate all of $\cC$. This leaves two homomorphisms $\phi: \Z{M} \to \BrPic(\ad(D_6^{(n)}))$ to consider, the map defined by $1 \mapsto D_6^\text{odd}$ and the map defined by $1 \mapsto _{P\leftrightarrow Q}D_6^\text{odd}$. In particular we may conclude that $M$ is even.

\vspace{1em}

\item \textbf{Case $N \neq \{2,3,5\}$:}

For these cases the only bimodules over $\ad(D_{2N}^{(n)})$ with an object of dimension less than $2$ are $D_{2N}^\text{odd}$ and $_{P\leftrightarrow Q}D_{2N}^\text{odd}$. This leaves two homomorphisms $\phi: \Z{M} \to \BrPic(\ad(D_{2N}^{(n,+)}))$ to consider, the map defined by $1 \mapsto D_{2N}^\text{odd}$ and the map defined by $1 \mapsto _{P\leftrightarrow Q}D_{2N}^\text{odd}$. In particular we may conclude that $M$ is even.
\end{trivlist}\vspace{1em}
For either case we see that $M$ must be even, and there are two homomorphisms $\phi: \Z{M} \to \BrPic(\ad(D_{2N}^{(n)}))$ to consider
\[ 1 \mapsto D_{2N}^\text{odd}\text{ and }1 \mapsto _{P\leftrightarrow Q}D_{2N}^\text{odd}.  \] 

When $N>2$ the centre of $\ad(D_{2N}^{(n)})$ contains no non-trivial invertible objects, and hence $H^2(\Z{M}, \Inv(\cZ(\ad(D_{2N})))) = \{e\}$. Thus, up to twisting the associator by an element of $H^3(\Z{M}, \mathbb{C}^\times)$, there are at most two $\Z{M}$-graded extensions of $\ad(D_{2N}^{(n)})$ generated by an object of Frobenius-Perron dimension less than 2. The categories
\[ \subcat{  D_{2N}^{(n,+)} \boxtimes \Vec(\Z{M})},\]
and 
\[ \subcat{  D_{2N}^{(n,-)} \boxtimes \Vec(\Z{M})},\]
are two such extensions, which by Lemma~\ref{lem:d2ndiff} are not equivalent up to twisting the associator. Hence these two categories, up to twisting the associator by an element of $H^3(\Z{M}, \mathbb{C}^\times)$, realise all $\Z{M}$-graded extensions of $\ad(D_{2N}^{(n)})$, generated by an object of Frobenius-Perron dimension less than 2.
\end{proof}

\subsection*{Cyclic extensions of categories of adjoint $A_{2N+1}$ type, $N \neq \{1,3\}$ }

The adjoint $A_\text{odd}$ cases turn out to be particularly difficult. In this subsection we restrict our attention to $N \notin \{1,3\}$. While there is only a single bimodule over the category $\ad(A_{2N+1}^{(n)})$ that could give rise to a $\Z{M}$-graded extension of $\ad(A_{2N+1}^{(n)})$ $\otimes$-generated by an object of Frobenius-Perron dimension less than 2, we now have that there are now non-trivial invertible objects in the centre. These invertible elements form a group isomorphic to $\Z{2}$, and hence we have that 
\[   H^2(\Z{2M} , \Inv(\cZ(\ad(A_{2N+1})))) = \Z{2},\]
regardless of choice of homomorphism $\Z{2M} \to \BrPic(\ad(A_{2N+1}))$. Thus, up to twisting the associator, there exist two possible $\Z{2M}$-graded extensions of $\ad(A_{2N+1}^{(n)})$ $\otimes$-generated by an object of Frobenius-Perron dimension less than 2. We begin this subsection by constructing both of these extensions.

One of these $\Z{2M}$-graded extensions is easy to construct, and is realised by the category
\[ \subcat{A_{2N+1}^{(n)}\boxtimes \LVec( \Z{2M} )  }.   \]
Constructing the other extension is much more involved.

Consider the non-trivial invertible object $f^{(2N)}$ of $A_{2N+1}^{(n)}$. The subcategory generated by this object lifts to either a copy of $\Rep(\Z{2})$, or of $\sVec$ in the centre, depending on if $N$ is even or odd respectively. 

If $N$ is even, then by Lemma~\ref{lem:liftyourself}, the subcategory $\langle f^{(2N)} \boxtimes 2M  \rangle$ of $A_{2N+1}^{(n)} \boxtimes \LVec(\Z{4M})$ lifts to a copy of $\Rep(\Z{2})$ in the centre, and hence the subcategory $\langle f^{(2N)} \boxtimes 2M  \rangle$ of $\subcat{A_{2N+1}^{(n)} \boxtimes \LVec(\Z{4M})}$ also lifts to a copy of $\Rep(\Z{2})$ in the centre. Thus we can de-equivariantize $\subcat{A_{2N+1}^{(n)} \boxtimes \LVec(\Z{4M})}$ by the subcategory $\langle f^{(2N)} \boxtimes 2M  \rangle$ to get the new fusion category 
\[    \subcat{A_{2N+1}^{(n)}\boxtimes \LVec( \Z{4M} )  }_{ \langle f^{(2N)} \boxtimes 2M  \rangle    }.   \]

If $N$ is odd, then by Lemma~\ref{lem:liftyourself}, the subcategory $\langle f^{(2N)} \boxtimes 2M  \rangle$ of $A_{2N+1}^{(n)} \boxtimes \IVec(\Z{4M})$ lifts to a copy of $\Rep(\Z{2})$ in the centre, and hence the subcategory $\langle f^{(2N)} \boxtimes 2M  \rangle$ of $\subcat{A_{2N+1}^{(n)} \boxtimes \IVec(\Z{4M})}$ also lifts to a copy of $\Rep(\Z{2})$ in the centre. Thus we can de-equivariantize $\subcat{A_{2N+1}^{(n)} \boxtimes \IVec(\Z{4M})}$ by the subcategory $\langle f^{(2N)} \boxtimes 2M  \rangle$ to get the new fusion category 
\[    \subcat{A_{2N+1}^{(n)}\boxtimes \IVec( \Z{4M} )  }_{ \langle f^{(2N)} \boxtimes 2M  \rangle    }.   \]

\begin{rmk}
In an abuse of notation we will write $ \subcat{A_{2N+1}^{(n)}\boxtimes \Vec^\pm( \Z{4M} )  }_{ \langle f^{(2N)} \boxtimes 2M  \rangle    }$ to mean
\[ \subcat{A_{2N+1}^{(n)}\boxtimes \LVec( \Z{4M} )  }_{ \langle f^{(2N)} \boxtimes 2M  \rangle    }\]
if $N$ is even, and 
\[ \subcat{A_{2N+1}^{(n)}\boxtimes \IVec( \Z{4M} )  }_{ \langle f^{(2N)} \boxtimes 2M  \rangle    }\]
if $N$ is odd.
\end{rmk}

As the action of $ f^{(2N)} \boxtimes 2M$ is fixed point free in either case, the fusion rules of these de-equivariantizations can easily be determined from the fusion rules of $A_{2N+1}^{(n)}$, using the free module functor
\[  \subcat{A_{2N+1}^{(n)}\boxtimes \Vec^{\pm}( \Z{4M} )  } \to  \subcat{A_{2N+1}^{(n)}\boxtimes \Vec^\pm( \Z{4M} )  }_{ \langle f^{(2N)} \boxtimes 2M  \rangle    }.   \]  
The following Lemmas show that the categories $\subcat{A_{2N+1}^{(n)}\boxtimes \Vec^\pm( \Z{4M} )  }_{ \langle f^{(2N)} \boxtimes 2M  \rangle    }$ is a $\Z{2M}$-graded extension of $\ad(A_{2N+1}^{(n)})$, that is inequivalent, even up to twisting the associator, to the category
\[  \subcat{A_{2N+1}^{(n)}\boxtimes \LVec( \Z{2M} )  } .  \]
\begin{lemma}\label{lem:aOddExt}
Let $N \geq 1$ and $M\geq 1$. The category $\subcat{A_{2N+1}^{(n)}\boxtimes \Vec^\pm( \Z{4M} )  }_{ \langle f^{(2N)} \boxtimes 2M  \rangle    }$ is a $\Z{2M}$ graded extension of $\ad(A_{2N+1}^{(n)})$.
\end{lemma}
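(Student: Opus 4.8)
The plan is to work with the explicit $\Z{4M}$-grading on $\mathcal{B} := \subcat{A_{2N+1}^{(n)}\boxtimes \Vec^\pm(\Z{4M})}$ and push it through the de-equivariantization. Recall that $\mathcal{B}$ is a $\Z{4M}$-graded extension of $\ad(A_{2N+1}^{(n)})$, with $\mathcal{B}_k$ consisting of the objects of $A_{2N+1}^{(n)}\boxtimes \Vec^\pm(\Z{4M})$ of bidegree $(k \bmod 2,\, k)$; in particular $\mathcal{B}_0 = \ad(A_{2N+1}^{(n)})$. First I would observe that $g := f^{(2N)}\boxtimes 2M$ is an object of $\mathcal{B}$ (it is used in the construction preceding the statement), that it is invertible of order $2$ since $f^{(2N)}$ and $2M$ both are and $g^{\otimes 2} = (f^{(2N)})^{\otimes 2}\boxtimes 4M = \mathbf{1}$, and that it lies in the homogeneous component $\mathcal{B}_{2M}$, since under the identification of the grading group of $\mathcal{B}$ with $\Z{4M}$ the bidegree $(0,2M)$ is $2M\cdot(1,1)$. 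Writing $A := \mathbf{1}\oplus g = \operatorname{Fun}(\Z{2})$, with the commutative algebra structure coming from the lift of $\langle g\rangle$ to $\Rep(\Z{2})$ in $\cZ(\mathcal{B})$ established above, the category in question is the de-equivariantization $\mathcal{B}_A$.

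Next I would build the grading. Put $\Z{2M} = \Z{4M}/\langle 2M\rangle$, and for $\bar j\in\Z{2M}$ let $(\mathcal{B}_A)_{\bar j}$ be the full subcategory of $A$-modules whose underlying object lies in $\mathcal{B}_j\oplus\mathcal{B}_{j+2M}$ (independent of the representative $j$). Since $\mathcal{B}_A$ is a fusion category in which every simple object is a summand of some free module $A\otimes S = S\oplus(g\otimes S)$ with $S\in\mathcal{B}_k$ simple, and $g\otimes S\in\mathcal{B}_{k+2M}$, every simple $A$-module lies in a unique $(\mathcal{B}_A)_{\bar j}$; hence $\mathcal{B}_A = \bigoplus_{\bar j}(\mathcal{B}_A)_{\bar j}$. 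Compatibility with $\otimes_A$ is immediate because $M\otimes_A N$ is a direct summand of $M\otimes N$ and $(\mathcal{B}_j\oplus\mathcal{B}_{j+2M})\otimes(\mathcal{B}_i\oplus\mathcal{B}_{i+2M})\subseteq\mathcal{B}_{i+j}\oplus\mathcal{B}_{i+j+2M}$. Each component is nonzero, since $\mathcal{B}_j\neq 0$ and the free module on any nonzero object of $\mathcal{B}_j$ is a nonzero object of $(\mathcal{B}_A)_{\bar j}$. Thus $\mathcal{B}_A$ is faithfully $\Z{2M}$-graded.

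Finally I would identify the trivial component. By construction $(\mathcal{B}_A)_{\bar 0}$ is the de-equivariantization by $A$ of the $\Z{2}$-graded fusion subcategory $\mathcal{B}_0\oplus\mathcal{B}_{2M}$ of $\mathcal{B}$, in which $g$ is an invertible object in the nontrivial grade with $g^{\otimes 2}\cong\mathbf{1}$. I claim the free module functor restricted to $\mathcal{B}_0$ is a monoidal equivalence $\mathcal{B}_0 \simeq (\mathcal{B}_A)_{\bar 0}$: it is monoidal (the free functor always is); it is fully faithful because $\Hom_{\mathcal{B}_A}(A\otimes X, A\otimes Y) = \Hom_{\mathcal{B}}(X, Y)\oplus\Hom_{\mathcal{B}}(X, g\otimes Y) = \Hom_{\mathcal{B}}(X,Y)$ for $X,Y\in\mathcal{B}_0$, the second summand vanishing for grading reasons; and it is essentially surjective because any simple object of $(\mathcal{B}_A)_{\bar 0}$ is a summand of $A\otimes S$ with $S\in\mathcal{B}_0$ or $S\in\mathcal{B}_{2M}$, and in the latter case $A\otimes S\cong A\otimes(g\otimes S)$ with $g\otimes S\in\mathcal{B}_0$. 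Hence $(\mathcal{B}_A)_{\bar 0}\simeq\mathcal{B}_0 = \ad(A_{2N+1}^{(n)})$, so $\mathcal{B}_A = \subcat{A_{2N+1}^{(n)}\boxtimes \Vec^\pm(\Z{4M})}_{\langle f^{(2N)}\boxtimes 2M\rangle}$ is a $\Z{2M}$-graded extension of $\ad(A_{2N+1}^{(n)})$.

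I expect the main obstacle to be this last step: checking that the de-equivariantization genuinely collapses the $\{0,2M\}$-part of the grading down to a single copy of $\ad(A_{2N+1}^{(n)})$, rather than to something larger. This hinges on the combination of $g$ being invertible, of order $2$, and sitting in the nontrivial grade; the remainder is bookkeeping relating the $\Z{4M}$-grading on $\mathcal{B}$ to the $\Z{2M}$-grading on $\mathcal{B}_A$.
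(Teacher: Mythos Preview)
Your argument is correct and in fact cleaner than the paper's. Both proofs begin the same way, implicitly or explicitly using that $\mathcal{B}=\subcat{A_{2N+1}^{(n)}\boxtimes\Vec^\pm(\Z{4M})}$ is $\Z{4M}$-graded with $\mathcal{B}_0=\ad(A_{2N+1}^{(n)})$ and $g=f^{(2N)}\boxtimes 2M\in\mathcal{B}_{2M}$, and both identify the trivially graded piece of the de-equivariantization with the image of $\mathcal{B}_0$ under the free module functor (the paper's objects $(f^{(2k)}\boxtimes 0)\oplus(f^{(2N-2k)}\boxtimes 2M)$ are precisely the free modules $A\otimes(f^{(2k)}\boxtimes 0)$). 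The difference is in how the monoidal equivalence with $\ad(A_{2N+1}^{(n)})$ is established: the paper only checks that these free modules have adjoint $A_{2N+1}$ \emph{fusion rules}, and must then invoke the classification result Proposition~\ref{lem:adclass} to upgrade this to a monoidal equivalence with some $\ad(A_{2N+1}^{(m)})$, followed by a categorical-dimension argument to pin down $m=n$. Your route bypasses both steps by showing directly that the free module functor restricted to $\mathcal{B}_0$ is a fully faithful, essentially surjective monoidal functor onto $(\mathcal{B}_A)_{\bar 0}$; this gives the monoidal equivalence with $\ad(A_{2N+1}^{(n)})$ on the nose, with the correct value of $n$ for free. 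Your approach is more robust (it does not rely on a classification of categories with a given fusion ring) and would apply verbatim in situations where no such classification is available; the paper's approach has the modest advantage of making the simple objects of the trivial piece explicit.
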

\begin{proof}
For $0\leq n \leq N$, consider the objects:
\[    \{ (f^{( 2n )} \boxtimes 0) \oplus ( f^{(2N - 2n)} \boxtimes 2M) \} \]
of $\subcat{A_{2N+1}^{(n)}\boxtimes \Vec^\pm( \Z{4M} )  }_{ \langle f^{(2N)} \boxtimes 2M  \rangle    }$. The free module functor 
\[   \subcat{A_{2N+1}^{(n)}\boxtimes \Vec^\pm( \Z{4M} )  } \to \subcat{A_{2N+1}^{(n)}\boxtimes \Vec^\pm( \Z{4M} )  }_{ \langle f^{(2N)} \boxtimes 2M  \rangle    } , \]
allows us to determine that these $N+1$ objects form a subcategory with adjoint $A_{2N+1}$ fusion rules. We use Proposition~\ref{lem:adclass} to see that this subcategory is equivalent to $\ad(A_{2N+1}^{(m)})$ for some choice of $m\in \Z{2N+2}^\times$. Considering categorical dimensions determines that $m=n$, which completes the proof.
\end{proof}

\begin{lemma}\label{lem:aOddDiff}
Let $N \geq 2$ and $M\geq 1$, or $N=1$ and $M$ even. Then the categories $ \subcat{A_{2N+1}^{(n)}\boxtimes \LVec( \Z{2M} )  }$ and $\subcat{A_{2N+1}^{(n)}\boxtimes \Vec^\pm( \Z{4M} )  }_{ \langle f^{(2N)} \boxtimes 2M  \rangle    }$ have different fusion rules.
\end{lemma}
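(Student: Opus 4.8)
The plan is to distinguish the two fusion rings by counting self\nobreakdash-dual simple objects, an invariant of a fusion ring since a simple $X$ is self-dual precisely when $N_{X,X}^{\mathbf 1}=1$. Coarser invariants do not suffice here: both categories have $M(2N+1)$ simples with the same multiset of Frobenius--Perron dimensions and the same grading group $\Z{2M}$ (they realise the same homomorphism into the Brauer--Picard group), and their groups of invertible objects, $\Z 2\times\Z M$ and $\Z{2M}$, even agree when $M$ is odd, so one is forced down to the duality structure.

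First I would record the simples of $\subcat{A_{2N+1}^{(n)}\boxtimes \LVec(\Z{2M})}$: they are the $f^{(k)}\boxtimes j$ with $0\le k\le 2N$, $j\in\Z{2M}$, and $k\equiv j\pmod 2$. Every $f^{(k)}$ is self-dual in $A_{2N+1}^{(n)}$, so $(f^{(k)}\boxtimes j)^*\cong f^{(k)}\boxtimes(-j)$, and such an object is self-dual iff $2j\equiv 0\pmod{2M}$, i.e.\ $j\in\{0,M\}$. Counting the admissible $k$ for $j=0$ and for $j=M$ gives exactly $s_1 := 2(N+1)$ self-dual simples when $M$ is even, and $s_1 := 2N+1$ when $M$ is odd.

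Next I would treat the de-equivariantization. The simples of $\subcat{A_{2N+1}^{(n)}\boxtimes \Vec^\pm(\Z{4M})}$ are the $f^{(k)}\boxtimes j$ with $j\in\Z{4M}$ and $k\equiv j\pmod 2$, and the invertible $z:=f^{(2N)}\boxtimes 2M$ acts on them by $z\otimes(f^{(k)}\boxtimes j)\cong f^{(2N-k)}\boxtimes(j+2M)$, which is never isomorphic to $f^{(k)}\boxtimes j$ (as $j\ne j+2M$ in $\Z{4M}$), so the action is free. Hence the simples of $\subcat{A_{2N+1}^{(n)}\boxtimes \Vec^\pm(\Z{4M})}_{\langle f^{(2N)}\boxtimes 2M\rangle}$ are the free modules $F(f^{(k)}\boxtimes j)$, with $F(X)\cong F(Y)$ iff $Y\in\{X,\,z\otimes X\}$. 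Since $F$ is a tensor functor it preserves duals, so $F(f^{(k)}\boxtimes j)$ is self-dual iff $f^{(k)}\boxtimes(-j)$ is isomorphic to $f^{(k)}\boxtimes j$ or to $f^{(2N-k)}\boxtimes(j+2M)$; the first holds iff $j\in\{0,2M\}$, and the second iff $k=N$ and $j\in\{M,3M\}$ (which also forces $N\equiv M\pmod 2$). Passing to $\langle z\rangle$-orbits, the first condition yields $N+1$ self-dual simples (represented by $F(f^{(k)}\boxtimes 0)$, $k$ even), and the second yields one further self-dual simple $F(f^{(N)}\boxtimes M)$, distinct from these, exactly when $N\equiv M\pmod 2$. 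So here the count is $s_2=N+2$ if $N\equiv M\pmod 2$ and $s_2=N+1$ otherwise.

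Finally I would compare $s_1$ and $s_2$. For $N\ge 2$ each of the equations $2(N+1)=N+2$, $2(N+1)=N+1$, $2N+1=N+2$, $2N+1=N+1$ forces $N\le 1$, a contradiction, so $s_1\ne s_2$ for every $M\ge 1$; and for $N=1$ with $M$ even one has $s_1=4\ne 2=s_2$. Thus in every case of the hypothesis the two fusion rings are non-isomorphic. The only real obstacle is the bookkeeping in the de-equivariantization --- in particular spotting the extra self-dual object $F(f^{(N)}\boxtimes M)$, which comes from the ``twisted'' duality relation $(f^{(N)}\boxtimes M)^*\cong z\otimes(f^{(N)}\boxtimes M)$ and appears precisely when $N\equiv M\pmod 2$ --- and then checking that, combined with the parity of $M$, this always produces a strict discrepancy under the stated restrictions. (The excluded case $N=1$, $M$ odd is exactly where all four count-equalities become consistent, $s_1=s_2=3$, so a genuinely finer invariant would be needed there.)
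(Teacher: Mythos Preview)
Your proof is correct and takes a somewhat different route from the paper's. The paper splits into two cases: when $M$ is odd (and $N\ge 2$) it counts self-dual simple objects of the specific Frobenius--Perron dimension $2\cos(\pi/(2N+2))$, finding two in the first category and none in the second; when $M$ is even it instead compares the groups of invertible objects, $\Z{2}\times\Z{M}$ versus $\Z{2M}$, which are non-isomorphic for $M$ even. Your approach replaces this case split by a single uniform invariant, the total count of self-dual simples, which handles both parities at once. This is arguably cleaner, and your careful identification of the ``extra'' self-dual simple $F(f^{(N)}\boxtimes M)$ arising from the twisted duality $(f^{(N)}\boxtimes M)^*\cong z\otimes(f^{(N)}\boxtimes M)$ when $N\equiv M\pmod 2$ is exactly the bookkeeping needed to make the uniform count work. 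The paper's argument, on the other hand, is more immediately transparent in each case separately (the $M$-even invertible-object comparison requires essentially no computation). Both arguments rest on the same free-module description of the de-equivariantization, so the difference is one of packaging rather than of underlying ideas.
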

\begin{proof}
We split this proof into two cases, depending on if $M$ is even or odd.

\begin{trivlist}\leftskip=2em
\item \textbf{Case $M$ odd and $N\neq 1$:}

When $M$ is odd, the category $\subcat{A_{2N+1}^{(n)}\boxtimes \LVec( \Z{2M} )  }$ has exactly two self-dual objects of Frobenius-Perron dimension $2\cos\left(\frac{\pi}{2N+2}\right)$. These are the objects
\[   f^{(1)} \boxtimes M \text{ and } f^{(2N-1)} \boxtimes M. \]

On the other hand, in the category $\subcat{A_{2N+1}^{(n)}\boxtimes \Vec^\pm( \Z{4M} )  }_{ \langle f^{(2N)} \boxtimes 2M  \rangle    }$ there are no self-dual objects of Frobenius-Perron dimension $2\cos\left(\frac{\pi}{2N+2}\right)$. To see this, notice that such objects would have to live in the $0$ or $M$ graded pieces of $\subcat{A_{2N+1}^{(n)}\boxtimes \Vec^\pm( \Z{4M} )  }_{ \langle f^{(2N)} \boxtimes 2M  \rangle    }$. Then, simply by considering dimensions, we see that the only possibilities for such objects are:
\[     (f^{(1)}\boxtimes M) \oplus (f^{(2N-1)}\boxtimes 3M) \text{ or }  (f^{(2N-1)}\boxtimes M) \oplus (f^{(1)}\boxtimes 3M).\]
However a direct calculations shows that these two objects are dual to each other. Therefore the fusion rules for the categories $ \subcat{A_{2N+1}^{(n)}\boxtimes \LVec( \Z{2M} )  }$ and $\subcat{A_{2N+1}^{(n)}\boxtimes \Vec^\pm( \Z{4M} )  }_{ \langle f^{(2N)} \boxtimes 2M  \rangle    }$ are different.

\vspace{1em}

\item \textbf{Case $M$ even:}

The group of invertible objects of the category $ \subcat{A_{2N+1}^{(n)}\boxtimes \LVec( \Z{2M} )  }$ can easily be seen to be isomorphic to $\Z{2}\boxtimes \Z{M}$.

On the other hand, there are also $2M$ invertible objects of $\subcat{A_{2N+1}^{(n)}\boxtimes \Vec^\pm( \Z{4M} )  }_{ \langle f^{(2N)} \boxtimes 2M  \rangle    }$. One of these is the object $(f^{(0)}\boxtimes  2 ) \oplus (f^{(2N)}\boxtimes  2M+2 )$. The free module functor 
\[   \subcat{A_{2N+1}^{(n)}\boxtimes \Vec^\pm( \Z{4M} )  } \to  \subcat{A_{2N+1}^{(n)}\boxtimes \Vec^\pm( \Z{4M} )  }_{ \langle f^{(2N)} \boxtimes 2M  \rangle    } \]
allows us to see that the object $(f^{(0)}\boxtimes  2 ) \oplus (f^{(2N)}\boxtimes  2M+2 )$ has order $2M$, and thus the group of invertibles of the category $\subcat{A_{2N+1}^{(n)}\boxtimes \Vec^\pm( \Z{4M} )  }_{ \langle f^{(2N)} \boxtimes 2M  \rangle    }$ is isomorphic to $\Z{2M}$. As $M$ is even, the groups $\Z{2}\boxtimes \Z{M}$ and $\Z{2M}$ are non-isomorphic, and thus the fusion rules for the categories $ \subcat{A_{2N+1}^{(n)}\boxtimes \LVec( \Z{2M} )  }$ and $\subcat{A_{2N+1}^{(n)}\boxtimes \Vec^\pm( \Z{4M} )  }_{ \langle f^{(2N)} \boxtimes 2M  \rangle    }$ are different.
\end{trivlist}
\end{proof}

Now that we have shown that the fusion categories \[ \subcat{A_{2N+1}^{(n)}\boxtimes \Vec^\pm( \Z{4M} )  }_{ \langle f^{(2N)} \boxtimes 2M  \rangle    }.  \] realise the possible interesting extensions of $\ad(A_{2N+1}^{(n)})$, we can complete our classification result for this subsection.

\begin{lemma}\label{lem:aodd}
Let $N \notin \{1,3\}$ a natural number, and fix $n\in \Z{2N+2}^\times / \{\pm\}$. If $\cC$ is a $\Z{M}$-graded extension of $\ad(A_{2N+1}^{(n)})$, $\otimes$-generated by an object of Frobenius-Perron dimension less than 2, then $M$ is even, and, up to twisting the associator of $\cC$ by an element of $H^3(\Z{M}, \mathbb{C}^\times)$, the category $\cC$ is monoidally equivalent to one of: 
\begin{align*}
& \subcat{A_{2N+1}^{(n)}\boxtimes \LVec( \Z{M} )  }  \text{ or,} \\
& \subcat{A_{2N+1}^{(n)}\boxtimes \LVec( \Z{2M} )  }_{ \langle f^{(2N)} \boxtimes M  \rangle    },
\end{align*}
if $N$ is even, and one of :
\begin{align*}
& \subcat{A_{2N+1}^{(n)}\boxtimes \LVec( \Z{M} )  }  \text{ or,} \\
& \subcat{A_{2N+1}^{(n)}\boxtimes \IVec( \Z{2M} )  }_{ \langle f^{(2N)} \boxtimes M  \rangle    },
\end{align*}
if $N$ is odd.
\end{lemma}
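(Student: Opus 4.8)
The plan is to run the uniform three-step analysis of Section~\ref{sec:class} for $\cC=\ad(A_{2N+1}^{(n)})$: classify the cyclic homomorphisms $\Z{M}\to\BrPic(\ad(A_{2N+1}^{(n)}))$ that can underlie an extension $\otimes$-generated by an object of Frobenius-Perron dimension less than $2$; bound, by a group cohomology computation, the number of extensions realising each surviving homomorphism; and then realise that bound with explicit constructions. Since $N\notin\{1,3\}$ the ADE index $2N+1$ avoids $3$ and $7$, so Table~\ref{table:bim} lists the invertible bimodules over $\ad(A_{2N+1}^{(n)})$ as the trivial bimodule $A_{2N+1}^{\mathrm{even}}$, the order-two bimodule $A_{2N+1}^{\mathrm{odd}}$, and, when $N$ is odd, two further order-two bimodules $D^{\mathrm{even}},D^{\mathrm{odd}}$; moreover Table~\ref{tab:Inv} gives $\Inv(\cZ(\ad(A_{2N+1}^{(n)})))\cong\Z{2}$ with trivial $\BrPic$-action, since $2N+1$ is odd and $\neq 3$.

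For the first step I would rule out all homomorphisms except the one with $c(1)=A_{2N+1}^{\mathrm{odd}}$. Using the dimension tables of Subsection~\ref{sub:dims}, neither $D^{\mathrm{even}}$ nor $D^{\mathrm{odd}}$ contains a simple object of dimension less than $2$ when $N\notin\{1,3\}$: their smallest simples are $\sqrt2\,[3]_q$ and $\sqrt2\,[4]_q$, which exceed $2$ since the ADE index is at least $11$; and the only simple objects of dimension less than $2$ in the trivial bimodule $A_{2N+1}^{\mathrm{even}}=\ad(A_{2N+1}^{(n)})$ are the two invertibles $\mathbf{1}$ and $f^{(2N)}$, the smallest non-invertible simple $f^{(2)}$ having dimension $[3]_q\geq 2$. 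Now let $\cC$ be an extension as in the hypothesis, $\otimes$-generated by an object $X$ of dimension less than $2$; then $X$ is necessarily simple, and since it generates the grading group it lies in a grade $k$ with $\gcd(k,M)=1$. The grade-$k$ component of $\cC$ is equivalent, as a bimodule, to $c(k)=c(1)^{k}$, so $c(1)^{k}$ has a simple object of dimension less than $2$; if $c(1)$ were trivial or of $D$-type this would force $X$ to be invertible, but then $\langle X\rangle$ would be pointed whereas $\cC\supseteq\ad(A_{2N+1}^{(n)})$ is not (it contains $f^{(2)}$, of dimension at least $2$). Hence $c(1)=A_{2N+1}^{\mathrm{odd}}$; in particular $M$ is even, and $c$ is the unique homomorphism sending each odd element of $\Z{M}$ to $A_{2N+1}^{\mathrm{odd}}$.

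For the second and third steps, the extension theory recalled from \cite{MR2677836} bounds the number of $\Z{M}$-graded extensions of $\ad(A_{2N+1}^{(n)})$ realising this $c$, up to twisting the associator, by $\lvert H^2(\Z{M},\Z{2})\rvert$, which equals $2$ since $M$ is even. Two inequivalent realisations are then: $\subcat{A_{2N+1}^{(n)}\boxtimes\LVec(\Z{M})}$, which is a $\Z{M}$-graded extension because $\operatorname{lcm}(2,M)=M$; and, by Lemma~\ref{lem:aOddExt}, the category $\subcat{A_{2N+1}^{(n)}\boxtimes\LVec(\Z{2M})}_{\langle f^{(2N)}\boxtimes M\rangle}$ when $N$ is even, respectively $\subcat{A_{2N+1}^{(n)}\boxtimes\IVec(\Z{2M})}_{\langle f^{(2N)}\boxtimes M\rangle}$ when $N$ is odd. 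Both are $\otimes$-generated by an object of dimension $2\cos(\pi/(2N+2))<2$, hence realise the $c$ identified above; and by Lemma~\ref{lem:aOddDiff} (which applies since $N\geq 2$) they have different fusion rules, so they are monoidally inequivalent and remain so after any twist of the associator. Since every $\cC$ as in the hypothesis realises the same $c$, and at most two extensions realise it up to twisting, $\cC$ is monoidally equivalent, up to twisting the associator, to one of these two categories.

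I expect the first step to be the main obstacle. The dimension estimates eliminating the $D$-type bimodules are routine but unavoidable, and the key conceptual point is that a homomorphism $c$ with trivial (or $D$-valued) image cannot support a category $\otimes$-generated by an object of dimension less than $2$ at all: in such an extension every object of dimension less than $2$ is invertible, so it cannot generate the non-pointed category $\cC$. Once $c$ is pinned down the cohomological bound is immediate, and the two realisations together with the proof that they differ are exactly the content of Lemmas~\ref{lem:aOddExt} and~\ref{lem:aOddDiff}, so here they only need to be invoked.
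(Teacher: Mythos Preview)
Your proof follows the same three-step strategy as the paper's, and the second and third steps (the cohomology bound $H^2(\Z{M},\Z{2})=\Z{2}$ for $M$ even and the two realisations via Lemmas~\ref{lem:aOddExt} and~\ref{lem:aOddDiff}) match it exactly.

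There is, however, an error in your first step. When $N$ is odd you claim that the bimodule $D_{N+2}^{\mathrm{even}}$ contains no simple object of dimension less than $2$, its smallest simple being $\sqrt{2}\,[3]_q$. This is false: $D_{N+2}^{\mathrm{even}}$ always contains a simple object of dimension exactly $\sqrt{2}$. Consequently your assertion that ``in such an extension every object of dimension less than $2$ is invertible'' breaks down for the homomorphism $c(1)=D_{N+2}^{\mathrm{even}}$, and that case is not eliminated by your argument. The paper's fix is different in kind: an object of Frobenius--Perron dimension $\sqrt{2}=2\cos(\pi/4)$ would $\otimes$-generate a cyclic extension of a category of adjoint $A_3$ type, hence cannot $\otimes$-generate all of $\cC$, which contains $\ad(A_{2N+1}^{(n)})$ with $N\neq 1$. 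With this correction, your argument goes through and coincides with the paper's.
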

\begin{proof}
We begin by classifying group homomorphisms $\Z{M} \to \BrPic(\ad(A_{2N+1}^{(n)}))$ that could give rise to a $\Z{M}$-graded extension of $\ad(A_{2N+1}^{(n)})$ $\otimes$-generated by an object of Frobenius-Perron dimension less than 2. Recall that the Brauer-Picard group of $\ad(A_{2N+1}^{(n)})$ is either $\Z{2}$ or $\Z{2}\times \Z{2}$, depending on whether $N$ is even or odd.

\begin{trivlist}\leftskip=2em
\item \textbf{Case $N$ even:}

Here the Brauer-Picard group is $\Z{2}$. As the only objects in the trivial bimodule $A_{2N+1}^\text{even}$ with dimension less than $2$ are invertible, we can ignore homomorphisms which map $1 \mapsto A_{2N+1}^\text{even}$, as such objects couldn't generate the entire category $\cC$. Thus we can assume $M$ even and $\Z{M} \to \BrPic(\ad(A_{2N+1}^{(n)}))$ is the map defined by $1 \mapsto {A_{2N+1}}^\text{odd}$.

\vspace{1em}

\item \textbf{Case $N$ odd:}

Here the Brauer-Picard group is $\Z{2}\times \Z{2}$. Exactly as in the $N$ even case we can rule out homomorphisms defined by $1 \mapsto A_{2N+1}^\text{even}$. 
The only time the bimodule $D_{N+2}^\text{odd}$ contains an object of dimension less than $2$ is when $N=3$, which we have excluded in this Lemma. Thus we can rule out homomorphisms defined by $1 \mapsto D_{N+2}^\text{odd}$.
The bimodule $D_{N+2}^\text{even}$ contains a single object of dimension less than $2$. However this object always has dimension $\sqrt{2}$, and could only generate the entire category $\cC$ when $N=1$, which we have also excluded in this Lemma. Therefore we can rule out homomorphisms defined by $1 \mapsto D_{N+2}^\text{even}$. Thus we can assume $M$ even and $\Z{M} \to \BrPic(\ad(A_{2N+1}^{(n)}))$ is the map defined by $1 \mapsto {A_{2N+1}}^\text{odd}$.
\end{trivlist}\vspace{1em}
Hence we can assume in either case that $M$ is even and $ \Z{M} \to \BrPic(\ad(A_{2N+1}^{(n)}))$ is the map defined by $1 \mapsto {A_{2N+1}}^\text{odd}$.

There are exactly two invertible elements in the centre of $\ad(A_{2N+1}^{(n)})$, and so we compute that
\[   H^2(\Z{M}, \Inv(\cZ(\ad(A_{2N+1}^{(n)})))) = \Z{2}.\] 
Thus, up to twisting the associator by an element of $H^3(\Z{M}, \mathbb{C}^\times)$, there are at most two $\Z{M}$-graded extensions of $\ad(A_{2N+1}^{(n)})$, $\otimes$-generated by an object of Frobenius-Perron dimension less than 2. By Lemma~\ref{lem:aOddExt} two such extensions are
\begin{align*}
& \subcat{A_{2N+1}^{(n)}\boxtimes \Vec( \Z{M} )  }  \text{ and,} \\
& \subcat{A_{2N+1}^{(n)}\boxtimes \LVec( \Z{2M} )  }_{ \langle f^{(2N)} \boxtimes M  \rangle    },
\end{align*}
when $N$ is even, and 
\begin{align*}
& \subcat{A_{2N+1}^{(n)}\boxtimes \Vec( \Z{M} )  }  \text{ and,} \\
& \subcat{A_{2N+1}^{(n)}\boxtimes \IVec( \Z{2M} )  }_{ \langle f^{(2N)} \boxtimes M  \rangle    },
\end{align*}
when $N$ is odd.

In either case, Lemma~\ref{lem:aOddDiff} gives that both extensions have different fusion rules, and thus are not equivalent up to twisting the associator. Hence these two categories, up to twisting the associator by an element of $H^3(\Z{M}, \mathbb{C}^\times)$, realise all $\Z{M}$-graded extensions of $\ad(A_{2N+1}^{(n)})$ generated by an object of Frobenius-Perron dimension less than 2.
\end{proof}

\subsection*{Cyclic extensions of the categories of adjoint $A_3$ type}
The adjoint $A_3$ case is different from the other adjoint $A_\text{odd}$ cases as there now exist 4 objects in $\Inv(\cZ(\ad(A_3^{(1)})))$ forming a $\Z{2} \times \Z{2}$ group. As is the general adjoint $A_\text{odd}$ case, we can restrict our attention to the homomorphism the $\Z{2M} \to \BrPic(\ad(A_3^{(1)}))$ defined by $1 \mapsto A_\text{3}^\text{odd}$. The bimodule $A_\text{3}^\text{odd}$ acts on $\Inv(\cZ(\ad(A_3^{(1)})))$ by exchanging the $\Z{2}$ factors. With this information we compute that 
\[    H^2(\Z{2M} , \Inv(\cZ(\ad(A_3^{(1)})))) = \begin{cases}
\{e\}   \text{ if } M \text{ is odd,}\\
\Z{2}  \text{ if } M \text{ is even.}
\end{cases} \]
Thus there are possible interesting $\Z{4M}$ graded extensions of $\ad(A_3^{(1)})$. In fact these interesting extensions exist, and are realised by the categories 
\[   \subcat{  A_3^{(1)} \boxtimes \IVec(\Z{8M})}_{ \langle f^{(2)} \boxtimes 4M \rangle }.\]
It is proven in Lemma~\ref{lem:aOddExt} that these categories are graded extensions of $\ad(A_3^{(1)})$, and Lemma~\ref{lem:aOddDiff} shows that these categories have different fusion rules to the categories $\subcat{  A_3^{(1)} \boxtimes \LVec(\Z{4M})}$. These facts allow us to prove the classification statement for this subsection.

\begin{lemma}\label{lem:a3}
If $\cC$ is a $\Z{M}$-graded extension of $\ad(A_3^{(1)})$, $\otimes$-generated by an object of Frobenius-Perron dimension less than 2, then either $M$ is even, and, up to twisting the associator of $\cC$ by an element of $H^3(\Z{M}, \mathbb{C}^\times)$, the category $\cC$ is monoidally equivalent to: 
\[
\subcat{ A_3^{(1)} \boxtimes \LVec(\Z{M}) },
\]
or $4$ divides $M$ and, up to twisting the associator of $\cC$ by an element of $H^3(\Z{M}, \mathbb{C}^\times)$, the category $\cC$ is monoidally equivalent to
\[   \subcat{  A_3^{(1)} \boxtimes \IVec(\Z{2M})}_{ \langle f^{(2)} \boxtimes M \rangle }.\]
\end{lemma}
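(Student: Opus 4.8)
\textbf{Proof plan for Lemma~\ref{lem:a3}.}
The strategy follows the standard outline used throughout this section: classify the relevant cyclic homomorphisms into $\BrPic(\ad(A_3^{(1)}))$, bound the number of extensions realising each using the appropriate second cohomology group, and then exhibit categories that saturate the bound. First I would recall from Table~\ref{table:bim} that $\BrPic(\ad(A_3^{(1)})) = \Z{2}$, generated by $A_3^\text{odd}$, whose only simple object has dimension $\sqrt 2$. Since $A_3^\text{even}$ is the trivial bimodule (containing only invertibles) and cannot $\otimes$-generate anything properly extending $\ad(A_3^{(1)})$, the only homomorphism $\Z{M} \to \BrPic(\ad(A_3^{(1)}))$ that can yield a suitable extension is the one sending $1 \mapsto A_3^\text{odd}$; this forces $M$ even, and fixes the $\BrPic$-action on $\Inv(\cZ(\ad(A_3^{(1)}))) \cong \Z{2}\times\Z{2}$ to be the swap of the two $\Z{2}$-factors (Table~\ref{tab:Inv}).

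Next I would compute $H^2(\Z{M}, \Z{2}\times\Z{2})$ where the generator acts by swapping the factors. Writing $M = 2L$, the swap-action module is the induced module $\Z{2}[\Z{2}]$ restricted along $\Z{M}\to\Z{2}$; by Shapiro's lemma (or a direct bar-resolution computation) one gets $H^2(\Z{M}, \Z{2}\wr) \cong H^2(\text{kernel}, \Z{2}) \cong \Z{2}$ when $L$ is even, and trivial when $L$ is odd. Thus: if $M \equiv 2 \pmod 4$ there is (up to twisting the associator) a unique $\Z{M}$-graded extension, namely $\subcat{A_3^{(1)}\boxtimes\LVec(\Z{M})}$; and if $4 \mid M$ there are at most two. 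The only subtlety here is getting the cohomology bookkeeping right for the swapped-factor action, which is a short and standard computation.

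It then remains, in the $4\mid M$ case, to produce a second extension inequivalent to $\subcat{A_3^{(1)}\boxtimes\LVec(\Z{M})}$ even up to cocycle twist. The candidate is $\subcat{A_3^{(1)}\boxtimes\IVec(\Z{2M})}_{\langle f^{(2)}\boxtimes M\rangle}$: Lemma~\ref{lem:aOddExt} (with $N=1$, and $4M$ there equal to our $2M$, so $M_{\text{there}} = M/2$, which is where $4\mid M$ is used to make $2M_{\text{there}} = M$ and the de-equivariantization well-defined) shows this is a $\Z{M}$-graded extension of $\ad(A_3^{(1)})$, and Lemma~\ref{lem:aOddDiff} (the $M$-even case of it, which again needs $M_{\text{there}}$ even, i.e.\ $4\mid M$) shows its group of invertibles is $\Z{M}$ rather than $\Z{2}\times\Z{M/2}$, so the two extensions have different fusion rules and hence cannot be related by an associator twist. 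Since this exhausts the at-most-two bound, the classification is complete.

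The main obstacle is the group-cohomology computation of $H^2(\Z{M}, \Inv(\cZ))$ with the non-trivial swap action --- everything else is an immediate application of the earlier lemmas --- but even that reduces, via Shapiro's lemma, to the already-known cohomology of a cyclic group with trivial $\Z{2}$ coefficients, so I expect no serious difficulty. One small point to be careful about is matching the indexing: the ambient category used in the construction is $\IVec(\Z{8M})$ with subgroup $\langle f^{(2)}\boxtimes 4M\rangle$ in the lemma statement versus $\IVec(\Z{2M})$ with $\langle f^{(2)}\boxtimes M\rangle$ in its alternative presentation, and one should check these agree (they do, after relabelling $M$).
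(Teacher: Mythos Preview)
Your proposal is correct and follows essentially the same approach as the paper: restrict to the homomorphism $1\mapsto A_3^\text{odd}$ (so $M$ is even), compute $H^2(\Z{M},\Inv(\cZ(\ad(A_3^{(1)}))))$ with the swap action to get $\{e\}$ or $\Z{2}$ according as $4\nmid M$ or $4\mid M$, and then realise the extensions via $\subcat{A_3^{(1)}\boxtimes\LVec(\Z{M})}$ and (when $4\mid M$) $\subcat{A_3^{(1)}\boxtimes\IVec(\Z{2M})}_{\langle f^{(2)}\boxtimes M\rangle}$, invoking Lemmas~\ref{lem:aOddExt} and~\ref{lem:aOddDiff} (with $N=1$, $M_{\text{there}}=M/2$ even) for the latter. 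The only addition over the paper is that you sketch the cohomology computation via Shapiro's lemma, whereas the paper simply states the answer from the discussion preceding the lemma; your reindexing check is also correct.
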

\begin{proof}
Recall there are only two bimodule categories over $\ad(A_3^{(1)})$, which are $A_3^\text{even}$ and $A_3^\text{odd}$. As the bimodule $A_3^\text{even}$ only contains invertible objects, any extension generated by one of these objects would be pointed. Thus we can assume $M$ is even, and that the homomorphism $\Z{M} \to \BrPic(\ad(A_3^{(1)}))$ is defined by $1 \mapsto A_3^\text{odd}$. From the earlier discussion we have that $H^2(\Z{M} , \Inv(\cZ(\ad(A_3^{(1)})))$ is trivial when $M$ is not divisible by 4, and isomorphic to $\Z{2}$ when $M$ is divisible by 4.

 When $M$ is not divisible by 4 there is, up to twisting the associator, a unique $\Z{M}$-graded extension of $\ad(A_3^{(1)})$ $\otimes$-generated by an object of Frobenius-Perron dimension less than 2. This category is realised by
\[ \subcat{ A_3^{(1)} \boxtimes \LVec(\Z{M}) }.  \]

When $M$ is divisible by 4 there are, up to twisting the associator, two $\Z{M}$-graded extensions of $\ad(A_3^{(1)})$ $\otimes$-generated by an object of Frobenius-Perron dimension less than 2. These categories are realised by
\[ \subcat{ A_3^{(1)} \boxtimes \LVec(\Z{M}) }, \text{ and }  \subcat{  A_3^{(1)} \boxtimes \IVec(\Z{2M})}_{ \langle f^{(2)} \boxtimes M \rangle }.\]
\end{proof}

\subsection*{Cyclic extensions of categories of adjoint $E_6$ type}
The adjoint $E_6$ case is difficult for the same reason the $A_\text{odd}$ case was difficult, in that $\Inv(\cZ(\ad(E_6^{(n,\pm)})))$ is non-trivial, giving rise to interesting extensions. Fortunately, we can directly adapt the techniques used in the adjoint $A_\text{odd}$ case to deal with these difficulties. We begin this section by constructing an interesting $\Z{2M}$-graded extension of $\ad(E_6^{(n,\pm)})$.

The category $E_6^{(n,\pm)}$ contains the subcategory $\langle Z \rangle$, which lifts to a copy of $\sVec$ in the centre. Thus from Lemma~\ref{lem:liftyourself}, the subcategory $\langle Z\boxtimes 2M \rangle$ of $E_6^{(n,\pm)} \boxtimes \IVec(\Z{4M})$ lifts to a copy of $\Rep(\Z{2})$ in the centre, and hence the subcategory $\langle Z\boxtimes 2M \rangle$ of $\subcat{E_6^{(n,\pm)} \boxtimes \IVec(\Z{4M})}$ also lifts to a copy of $\Rep(\Z{2})$ in the centre. Thus we can de-equivariantize $\subcat{E_6^{(n,\pm)} \boxtimes \IVec(\Z{4M})}$ by the subcategory $\langle Z \boxtimes 2M  \rangle  $ to get the new fusion category
\[\subcat{E_6^{(n,\pm)}\boxtimes \IVec( \Z{4M} )  }_{ \langle Z \boxtimes 2M  \rangle    }.\]

The following two Lemmas show that this fusion category is an interesting extension of $\ad(E_6^{(n,\pm)})$. The proofs are direct translations of the proofs of the corresponding Lemmas in the $A_\text{odd}$ case.

\begin{lemma}\label{lem:E6Ext}
The category $\subcat{E_6^{(n,\pm)}\boxtimes \IVec( \Z{4M} )  }_{ \langle Z \boxtimes M  \rangle    }$ is a $\Z{2M}$ graded extension of $\ad(E_6^{(n,\pm)})$.
\end{lemma}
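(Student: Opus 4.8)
The plan is to follow the strategy already used for the analogous statement in the adjoint $A_{\text{odd}}$ case, namely Lemma~\ref{lem:aOddExt}. Concretely, I want to exhibit inside $\subcat{E_6^{(n,\pm)}\boxtimes \IVec(\Z{4M})}_{\langle Z\boxtimes 2M\rangle}$ an explicit full fusion subcategory with the fusion rules of $\ad(E_6)$, and then identify it using the classification of pivotal categories with adjoint $E_6$ fusion rules (Proposition~\ref{lem:adclass}).

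First I would write down the candidate objects. Since $Z$ is the order-two invertible of $E_6^{(n,\pm)}$ (the "$\alpha$" object), de-equivariantizing by $\langle Z\boxtimes 2M\rangle$ identifies $X\boxtimes i$ with $(Z\otimes X)\boxtimes(i+2M)$. The free module functor
\[
\subcat{E_6^{(n,\pm)}\boxtimes \IVec(\Z{4M})} \longrightarrow \subcat{E_6^{(n,\pm)}\boxtimes \IVec(\Z{4M})}_{\langle Z\boxtimes 2M\rangle}
\]
sends $f^{(0)}\boxtimes 0$, $f^{(1)}\boxtimes 0$, $f^{(2)}\boxtimes 0$ to simple objects (the $\ad(E_6)$ objects live in the trivially graded piece and are fixed, not moved, by tensoring with $Z$), so the trivially graded part of the de-equivariantization contains a copy of $\ad(E_6^{(n,\pm)})$ as objects. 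I would then use the free module functor — which is a dominant tensor functor — to check that these three objects are closed under tensor product and carry exactly the adjoint $E_6$ fusion rules; this is the routine computation I would not grind through here, exactly as in Lemma~\ref{lem:aOddExt}. Having a subcategory with adjoint $E_6$ fusion rules, Proposition~\ref{lem:adclass} forces it to be monoidally equivalent to $\ad(E_6^{(m,\pm)})$ for some $m\in\Z{12}^\times/\{\pm\}$ and some sign; comparing categorical dimensions of the simples (which are inherited, via the free module functor, from $E_6^{(n,\pm)}$) pins down $m=n$ and the correct sign, so the subcategory is precisely $\ad(E_6^{(n,\pm)})$.

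Second, I need the grading. The category $\subcat{E_6^{(n,\pm)}\boxtimes \IVec(\Z{4M})}$ is $\Z{2M}$-graded: its grading group is determined by the image of the cyclic homomorphism $c$ into $\BrPic(\ad(E_6^{(n,\pm)}))$ as discussed before Lemma~\ref{lem:e8}, and the de-equivariantization by the $\Rep(\Z{2})$-lift of $\langle Z\boxtimes 2M\rangle$ halves the relevant $\Z{4M}$ down to $\Z{2M}$ while leaving the adjoint subcategory untouched — indeed $\ad$ is unchanged under such de-equivariantizations, so $\ad\big(\subcat{E_6^{(n,\pm)}\boxtimes \IVec(\Z{4M})}_{\langle Z\boxtimes 2M\rangle}\big)$ is exactly the $\ad(E_6^{(n,\pm)})$ subcategory just identified. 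Thus the de-equivariantization is a $\Z{2M}$-graded extension of $\ad(E_6^{(n,\pm)})$, which is the claim. (I should double-check the statement's use of $M$ versus $4M$: the subgroup is written $\langle Z\boxtimes M\rangle$ in this Lemma but $\langle Z\boxtimes 2M\rangle$ in the surrounding text; I will follow whichever is consistent with the $\Z{4M}$ ambient group so that $Z\boxtimes(\text{that element})$ has order two, i.e. the element $2M\in\Z{4M}$.)

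The main obstacle is the fusion-rule verification: one has to be careful that the three objects $f^{(0)}\boxtimes 0,\ f^{(1)}\boxtimes 0,\ f^{(2)}\boxtimes 0$ really remain simple and pairwise non-isomorphic after de-equivariantization — this uses that $Z$ fixes each $f^{(i)}$ (so the orbit structure is trivial on these objects, the action being "fixed point free" only on the $\Z{4M}$ factor, never identifying distinct $\ad(E_6)$ objects) — and that no unexpected summands appear in their tensor products, which again follows from the free module functor being monoidal and dominant together with the known $E_6^{(n,\pm)}$ fusion rules. Since this is precisely the argument of Lemma~\ref{lem:aOddExt} transported verbatim, I expect it to go through without surprises, and the proof can reasonably just say so.
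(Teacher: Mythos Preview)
Your approach is exactly the paper's: the paper's entire proof is the single sentence ``The proofs are direct translations of the proofs of the corresponding Lemmas in the $A_\text{odd}$ case,'' and you are sketching precisely that translation (free module functor, Proposition~\ref{lem:adclass}, then pinning down the parameter).

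Two slips in your sketch are worth correcting. First, the simples of $\ad(E_6^{(n,\pm)})$ are $f^{(0)}, f^{(2)}, Z$, not $f^{(0)}, f^{(1)}, f^{(2)}$ --- the object $f^{(1)}$ sits in the odd graded piece. Accordingly, the candidate objects in the de-equivariantization are the free modules $(X\boxtimes 0)\oplus((Z\otimes X)\boxtimes 2M)$ for $X\in\{f^{(0)},f^{(2)},Z\}$, exactly parallel to the objects $(f^{(2n)}\boxtimes 0)\oplus(f^{(2N-2n)}\boxtimes 2M)$ in Lemma~\ref{lem:aOddExt}; and $Z$ does not fix each of these (it swaps $f^{(0)}\leftrightarrow Z$ and fixes $f^{(2)}$), though your conclusion that $Z\boxtimes 2M$ acts without fixed points is still correct because of the shift in the $\Z{4M}$ factor. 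Second, categorical dimensions alone will not pin down the sign: $\ad(E_6^{(n,+)})$ and $\ad(E_6^{(n,-)})$ share the same loop parameter $q+q^{-1}$ and hence the same dimensions. Determining the sign requires following a rotational-eigenvalue morphism through the free module functor, as in Lemma~\ref{lem:d4plusext}. Your remark about the $M$ versus $2M$ inconsistency in the subscript is well taken.
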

\begin{lemma}\label{lem:E6Diff}
The categories $\subcat{E_6^{(n,\pm)}\boxtimes \LVec( \Z{2M} )  }$ and $ \subcat{E_6^{(n,\pm)}\boxtimes \IVec( \Z{4M} )  }_{ \langle Z \boxtimes 2M  \rangle    }$ have different fusion rules.
\end{lemma}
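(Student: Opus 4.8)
The plan is to mimic the proof of Lemma~\ref{lem:aOddDiff}, splitting into cases according to the parity of $M$ and exploiting either a discrepancy in the number of self-dual objects of a given dimension, or a discrepancy in the group of invertible objects. Both target categories are $\Z{2M}$-graded extensions of $\ad(E_6^{(n,\pm)})$ by Lemma~\ref{lem:E6Ext}, so the fusion rules can be read off from those of $E_6^{(n,\pm)}$: for $\subcat{E_6^{(n,\pm)}\boxtimes \LVec(\Z{2M})}$ directly via Definition~\ref{def:subcat}, and for the de-equivariantization via the free module functor
\[
\subcat{E_6^{(n,\pm)}\boxtimes \IVec(\Z{4M})} \to \subcat{E_6^{(n,\pm)}\boxtimes \IVec(\Z{4M})}_{\langle Z\boxtimes 2M\rangle},
\]
using that the action of $Z\boxtimes 2M$ is fixed-point free (since $Z$ is invertible of order $2$ in $E_6^{(n,\pm)}$ and $2M$ has order $2$ in $\Z{4M}$).

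First, in the case $M$ odd, I would count the self-dual objects of Frobenius-Perron dimension $d := \FPdim(f^{(1)}) = 2\cos\left(\frac{\pi}{24}\cdot\text{(appropriate integer)}\right)$ — more precisely the dimension of the object ``$f^{(1)}$'' of $E_6^{(n,\pm)}$, which is the smallest non-integer dimension occurring. In $\subcat{E_6^{(n,\pm)}\boxtimes \LVec(\Z{2M})}$ such objects are precisely $f^{(1)}\boxtimes M$ and its partners obtained by tensoring with $Z\boxtimes 0$ (noting $Z\otimes f^{(1)} \cong Y$, which has the same dimension), so one gets a definite even count of self-dual objects of dimension $d$. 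In the de-equivariantization, any object of that dimension must sit in the $0$- or $M$-graded piece, and a direct dimension count leaves only candidates of the form $(f^{(1)}\boxtimes M)\oplus(Y\boxtimes 3M)$ type; one then checks these pair up into dual pairs rather than being self-dual, exactly as in Lemma~\ref{lem:aOddDiff}. This forces different numbers of self-dual objects of dimension $d$, hence different fusion rules.

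Second, in the case $M$ even, I would instead compare the groups of invertible objects. The invertibles of $\subcat{E_6^{(n,\pm)}\boxtimes \LVec(\Z{2M})}$ form $\Z{2}\times\Z{M}$, generated by $Z\boxtimes 0$ (order $2$) and $f^{(0)}\boxtimes 2$ (order $M$). In the de-equivariantization one again finds $2M$ invertible objects, but the object $(f^{(0)}\boxtimes 2)\oplus(Z\boxtimes 2M+2)$ has order $2M$ — established via the free module functor as in the $A_\text{odd}$ argument — so the invertibles form $\Z{2M}$. For $M$ even, $\Z{2}\times\Z{M} \not\cong \Z{2M}$, so the fusion rules differ. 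Combining the two cases proves the Lemma.

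The main obstacle I anticipate is the bookkeeping in the $M$ odd case: one needs to be careful about exactly which objects of $E_6^{(n,\pm)}$ have dimension equal to that of $f^{(1)}$ (it is $f^{(1)}$ and $Y = Z\otimes f^{(1)}$), about how the grading of $E_6^{(n,\pm)}$ interacts with the grading of $\IVec(\Z{4M})$ after passing to $\subcat{-}$ and then de-equivariantizing, and about verifying that the two surviving candidate objects are genuinely dual to one another rather than self-dual — this last point is the crux and requires an honest (if short) computation with the fusion rules of $E_6$ together with the twist data of $\IVec(\Z{4M})$. Since the excerpt explicitly says ``the proofs are direct translations of the proofs of the corresponding Lemmas in the $A_\text{odd}$ case,'' I expect no genuinely new idea is needed beyond faithfully transporting the two-case argument of Lemma~\ref{lem:aOddDiff}.
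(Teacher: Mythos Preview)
Your proposal is correct and matches the paper's approach exactly: the paper states that the proof is a direct translation of Lemma~\ref{lem:aOddDiff}, i.e.\ the two-case argument (self-dual objects of a fixed dimension for $M$ odd, comparison of the groups of invertibles for $M$ even) that you outline. The bookkeeping you flag---identifying $f^{(1)}$ and $Y=Z\otimes f^{(1)}$ as the two objects of the relevant dimension, and checking that the candidate objects in the de-equivariantization pair into a dual pair---is precisely the $E_6$-specific substitution required, and no new idea is needed.
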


With these two Lemmas we can prove the classification result for the adjoint $E_6$ case.
\begin{lemma}\label{lem:e6}
Fix $n\in \Z{12}^\times / \{\pm \}$. If $\cC$ is a $\Z{M}$-graded extension of $\ad(E_6^{(n,\pm)})$, $\otimes$-generated by an object of Frobenius-Perron dimension less than 2, then $M$ is even, and, up to twisting the associator of $\cC$ by an element of $H^3(\Z{M}, \mathbb{C}^\times)$, the category $\cC$ is monoidally equivalent to one of: 
\begin{align*}
& \subcat{E_6^{(n,\pm)}\boxtimes \LVec( \Z{M} )  }  \text{ or,} \\
& \subcat{E_6^{(n,\pm)}\boxtimes \IVec( \Z{2M} )  }_{ \langle Z \boxtimes M  \rangle    }.
\end{align*}
\end{lemma}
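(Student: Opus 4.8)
The plan is to follow the template established in the preceding subsections (compare the proofs of Lemma~\ref{lem:d2n} and Lemma~\ref{lem:aodd}): classify the cyclic homomorphisms into $\BrPic(\ad(E_6^{(n,\pm)}))$ that could give rise to a suitable extension, bound the number of extensions realising the surviving homomorphism by a second-cohomology computation, and then exhibit enough concrete categories to saturate that bound.

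First I would classify group homomorphisms $\Z{M} \to \BrPic(\ad(E_6^{(n,\pm)}))$. By Table~\ref{table:bim} this Brauer-Picard group is $\Z{2}$, with non-trivial element the bimodule $E_6^\text{odd}$. By Subsection~\ref{sub:dims} the only object of the trivial bimodule $E_6^\text{even}$ of Frobenius-Perron dimension less than $2$ is invertible, so a cyclic extension $\otimes$-generated by such an object would be pointed and could not contain the $\ad(E_6^{(n,\pm)})$ subcategory that $\cC$ necessarily contains. This rules out the homomorphism $1 \mapsto E_6^\text{even}$, so $M$ must be even and the relevant homomorphism is $1 \mapsto E_6^\text{odd}$.

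Next I would compute the relevant cohomology group. From Table~\ref{tab:Inv} we have $\Inv(\cZ(\ad(E_6^{(n,\pm)}))) = \{\mathbf{1}\boxtimes\mathbf{1},\, f^{(10)}\boxtimes\mathbf{1}\} \cong \Z{2}$, on which $E_6^\text{odd}$ acts trivially; hence $\Z{M}$ acts trivially, and since $M$ is even we get $H^2(\Z{M}, \Z{2}) \cong \Z{2}$. So, up to twisting the associator by an element of $H^3(\Z{M}, \mathbb{C}^\times)$, there are at most two $\Z{M}$-graded extensions of $\ad(E_6^{(n,\pm)})$ that are $\otimes$-generated by an object of Frobenius-Perron dimension less than $2$. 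To see that exactly two occur, I would take the two candidates $\subcat{E_6^{(n,\pm)}\boxtimes \LVec(\Z{M})}$ and $\subcat{E_6^{(n,\pm)}\boxtimes \IVec(\Z{2M})}_{\langle Z \boxtimes M\rangle}$: the first is manifestly a $\Z{M}$-graded extension of $\ad(E_6^{(n,\pm)})$, and the second is one by Lemma~\ref{lem:E6Ext} (applied with $\IVec(\Z{2M})$ in place of $\IVec(\Z{4M})$). By Lemma~\ref{lem:E6Diff} these two categories have different fusion rules, so they are not monoidally equivalent, even after twisting the associator, which preserves fusion rules. Hence, up to associator twists, these two categories exhaust the extensions in question.

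The substantive content has been offloaded to Lemmas~\ref{lem:E6Ext} and~\ref{lem:E6Diff} and to the computations of $\BrPic$, $\cZ$, and the Frobenius-Perron dimensions of simples in bimodules imported from \cite{MR3808052}; within this lemma the only point requiring real care is the dimension argument excluding the trivial Brauer-Picard homomorphism, which depends on knowing precisely the dimensions of the simples of $E_6^\text{even}$ as recorded in Subsection~\ref{sub:dims}. I do not expect a genuine obstacle: by design the adjoint $E_6$ analysis is a direct translation of the adjoint $A_\text{odd}$ one, just as the $E_6$ subsection's introduction indicates.
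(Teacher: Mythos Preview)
Your proposal is correct and follows essentially the same argument as the paper's own proof: rule out the trivial homomorphism by the dimension check on $E_6^\text{even}$, compute $H^2(\Z{M},\Z{2})\cong\Z{2}$, and then invoke Lemmas~\ref{lem:E6Ext} and~\ref{lem:E6Diff} to realise and distinguish the two extensions. The only cosmetic difference is that you make the triviality of the $\Z{M}$-action on $\Inv(\cZ(\ad(E_6^{(n,\pm)})))$ explicit, which the paper leaves implicit.
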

\begin{proof}
Recall the Brauer-Picard group of $\ad(E_6^{(n,\pm)})$ is $\Z{2}$. The only objects in the trivial bimodule $E_6^\text{even}$ with Frobenius-Perron dimension less than $2$ are invertible, and hence could not $\otimes$-generate the entire category $\cC$. Thus we can assume that $M$ is even and $\Z{M} \to \BrPic(\ad(E_6^{(n,\pm)}))$ is the map defined by $1 \mapsto E_6^\text{odd}$.

There are exactly two invertible elements in the centre of $\ad(E_6^{(n,\pm)})$, and so
\[  H^2(\Z{M}, \Inv(\cZ(\ad(E_6^{(n,\pm)})))) = \Z{2}.\] 
Thus, up to twisting the associator by an element of $H^3(\Z{M}, \mathbb{C}^\times)$, there are at most two $\Z{M}$-graded extensions of $\ad(E_6^{(n,\pm)})$ generated by an object of Frobenius-Perron dimension less than 2. By Lemma~\ref{lem:E6Ext} two such extensions are
\begin{align*}
&  \subcat{E_6^{(n,\pm)}\boxtimes \LVec( \Z{M} )  }  \text{ and,} \\
& \subcat{E_6^{(n,\pm)}\boxtimes \IVec( \Z{2M} )  }_{ \langle Z \boxtimes M  \rangle        }.
\end{align*}
By Lemma~\ref{lem:E6Diff}, both of these extensions have different fusion rules, and thus are not monoidally equivalent, even up to twisting the associator. Hence these two categories, up to twisting the associator by an element of $H^3(\Z{M}, \mathbb{C}^\times)$, realise all $\Z{M}$-graded extensions of $\ad(E_6^{(n,\pm)})$ $\otimes$-generated by an object of Frobenius-Perron dimension less than 2.
\end{proof}

\subsection*{Cyclic extensions of $\ad(D_4)$}
As with the adjoint $A_3$ case, the adjoint $D_4$ case is interesting as there exist 9 objects in 
\[ \Inv(\cZ(\ad(D_4^{(n)}))) =  \ad(D_4^{(n)})\boxtimes \ad(D_4^{(n)})^\text{bop},\]
 forming a $\Z{3}\times \Z{3}$ group. Hence there is the possibility for the group
 \[  H^2(\Z{M} , \Inv(\cZ(\ad(D_4^{(n)}))))\]
 to be non-trivial, which implies the possible existence of interesting $\Z{M}$-graded extensions of $\ad(D_4^{(n)})$. We start this subsection by constructing a family of interesting $\Z{6M}$-graded extensions of $\ad(D_4^{(n)})$.
 
 From \cite{MR3808052} we know that the subcategory $\langle P \rangle$ of $D_4^{(n,\pm)}$ lifts to a copy of $\Rep(\Z{3})$ in the centre. Consider the subcategory $ \langle 2M \rangle$ of $\LVec(\Z{6M})$. This subcategory lifts to a copy of $\Rep(\Z{3})$ in the centre. Thus the subcategory $\langle P \boxtimes 2M\rangle$ of $D_4^{(n,\pm)}\boxtimes \LVec(\Z{6M})$ also lifts to a copy of $\Rep(\Z{3})$ in the centre, and hence the subcategory $\langle P \boxtimes 2M\rangle$ of $\subcat{D_4^{(n,\pm)}\boxtimes \LVec(\Z{6M})}$ also lifts to a copy of $\Rep(\Z{3})$ in the centre. Thus we can de-equivariantize $\subcat{D_4^{(n,\pm)}\boxtimes \LVec(\Z{6M})}$ by the subcategory $\langle P \boxtimes 2M\rangle$ to get the new fusion category
 \[ \subcat{D_4^{(n,\pm)}\boxtimes \LVec(\Z{6M})}_{\langle P \boxtimes 2M\rangle}.  \]
 
 In the following Lemmas we show that when $M$ is divisible by 3, these categories are interesting extensions of $\ad(D_4^{(n)})$.
 
 \begin{lemma}\label{lem:d4plusext}
 Fix $\kappa$ a choice of sign, then the category $\subcat{ D_4^{(n,\kappa)}\boxtimes \LVec(\Z{6M}})_{\langle P \boxtimes 2M \rangle}$ is a $\Z{M}$ graded extension of $D_4^{(\pm n,\kappa)}$.
\end{lemma}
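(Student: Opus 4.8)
The plan is to mimic the strategy already used for the analogous statements in the adjoint $A_{2N+1}$ and $E_6$ cases, namely Lemma~\ref{lem:aOddExt} and Lemma~\ref{lem:E6Ext}. First I would exhibit, inside $\subcat{D_4^{(n,\kappa)}\boxtimes \LVec(\Z{6M})}_{\langle P \boxtimes 2M\rangle}$, a subcategory with the fusion rules of $\ad(D_4)$, and then invoke Proposition~\ref{lem:adclass} to identify it with $\ad(D_4^{(m)})$ for some admissible $m$. Concretely, I would work with the free module functor
\[
\subcat{D_4^{(n,\kappa)}\boxtimes \LVec(\Z{6M})} \to \subcat{D_4^{(n,\kappa)}\boxtimes \LVec(\Z{6M})}_{\langle P \boxtimes 2M\rangle},
\]
which, since the de-equivariantizing object $P\boxtimes 2M$ acts freely (it generates a copy of $\Rep(\Z{3})$ with no fixed points), is a surjective tensor functor that simply sums over the $\Z{3}$-orbit of each simple. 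The objects I would propose as generators of the $\ad(D_4)$-subcategory are the images of $f^{(2)}\boxtimes 0$, $P\boxtimes 0$, and $Q\boxtimes 0$, together with the unit; in the de-equivariantization these become
\[
(f^{(2)}\boxtimes 0)\oplus(P\boxtimes 2M)\oplus(Q\boxtimes 4M),\quad
(P\boxtimes 0)\oplus(Q\boxtimes 2M)\oplus(f^{(2)}\boxtimes 4M),\quad\text{etc.},
\]
using the $D_4$ fusion rule $P\otimes P^* \cong \mathbf 1 \oplus f^{(2)}$ and the cyclic action $f^{(2)}\mapsto P\mapsto Q$ that $P$ induces. A short bookkeeping check, carried out entirely through the free module functor, should show these four objects are closed under tensor product with the $\ad(D_4)$ multiplication table.

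Next I would determine which member of the $\ad(D_4)$ family this subcategory is. By Proposition~\ref{lem:adclass} the subcategory is monoidally equivalent to $\ad(D_4^{(m)})$ for some $m\in\Z{6}^\times/\{\pm\}$, and to pin down $m$ I would compare categorical dimensions: the free module functor preserves categorical dimension up to the global scaling by the order of the de-equivariantizing group, so the categorical dimension of the ``$f^{(2)}$'' generator in the subcategory equals that of $f^{(2)}$ in $D_4^{(n,\kappa)}$, which forces $m=n$ (up to the sign ambiguity inherent to the adjoint notation, which is exactly why the statement reads $D_4^{(\pm n,\kappa)}$). Finally, to see that the whole category — not merely this subcategory — is a $\Z{M}$-graded extension of $\ad(D_4^{(n)})$, I would observe that $\subcat{D_4^{(n,\kappa)}\boxtimes \LVec(\Z{6M})}$ is $\Z{3M}$-graded with trivial piece $\ad(D_4^{(n,\kappa)})$ (this is the general $\subcat{-}$ construction of Definition~\ref{def:subcat} together with the lemma identifying its grading group), and that de-equivariantizing by $\langle P\boxtimes 2M\rangle$, whose grading-class has order $3$ in $\Z{3M}$, collapses the grading group from $\Z{3M}$ to $\Z{M}$ while leaving the adjoint subcategory unchanged; the identification $\ad(\subcat{\cC})=\ad(\cC)$ noted after Definition~\ref{def:subcat} then gives that the trivially graded piece is still $\ad(D_4^{(n)})$.

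The main obstacle I anticipate is the fusion-rule bookkeeping in the first step: one must verify that the proposed $\Z{3}$-orbit sums genuinely reproduce the $\ad(D_4)$ multiplication table, and in particular that the $P$ and $Q$ objects of $D_4$ interact with the $\LVec(\Z{6M})$ grading and with the de-equivariantization in a way that does not accidentally split an orbit or merge two of them. This requires care because $P$ and $Q$ are the objects of non-integer dimension in $D_4$ and are precisely the ones permuted by the auto-equivalence we are exploiting; a sign or index error there would give the wrong fusion rules. Everything else — the appeal to Proposition~\ref{lem:adclass}, the dimension computation, and the grading-group count — is routine given the machinery already set up in the paper, and indeed the proof should be, as the text promises, a direct translation of the $A_{2N+1}$ argument.
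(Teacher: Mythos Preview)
Your proposal misreads the target of the lemma. The statement asserts that the de-equivariantization is a $\Z{M}$-graded extension of the \emph{full} category $D_4^{(\pm n,\kappa)}$, not of its adjoint subcategory $\ad(D_4^{(n)})$. The notation $D_4^{(\pm n,\kappa)}$ is not shorthand for $\ad(D_4^{(n)})$: it denotes the full $D_4$ fusion category with a specific sign parameter $\kappa$, the $\pm n$ only recording that the argument does not distinguish $n$ from $-n$. Your appeal to Proposition~\ref{lem:adclass} and categorical dimensions can at best identify the adjoint subcategory as $\ad(D_4^{(n)})$; since $\ad(D_4^{(n,+)})\simeq\ad(D_4^{(n,-)})$, this says nothing about $\kappa$, and that is precisely the content the lemma is after.

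The paper's own proof acknowledges that the direct adaptation of Lemma~\ref{lem:aOddExt} only gets one to $D_4^{(\pm n,\mu)}$ for an \emph{undetermined} sign $\mu$, and explicitly flags that ``the subtlety of this argument comes in determining $\mu$.'' The resolution uses the planar-algebraic description of $D_4^{(n,\pm)}$: there is a distinguished morphism $S\in\Hom({f^{(1)}}^{\otimes 4}\to\mathbf{1})$ whose one-click rotational eigenvalue is $\pm i$, and this eigenvalue is exactly what separates the two signs. One tracks $S\boxtimes\id_0$ through the free module functor and observes that its rotational eigenvalue $\kappa i$ is preserved, forcing $\mu=\kappa$. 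Nothing in your outline touches this; you would need either this argument or an equivalent invariant to close the gap. There are also smaller inaccuracies worth fixing: $D_4$ has only the four simples $f^{(0)},f^{(1)},P,Q$, so there is no $f^{(2)}$ and in fact $P\otimes P^*\cong\mathbf{1}$ (both $P$ and $Q$ are invertible here); and the grading of $\subcat{D_4^{(n,\kappa)}\boxtimes\Vec(\Z{6M})}$ over $\ad(D_4)$ is $\Z{6M}$, not $\Z{3M}$, by the lcm formula following Definition~\ref{def:subcat}.
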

\begin{proof}
Direct adaptations of previous arguments (i.e. Lemma~\ref{lem:aOddExt}) show that the category $\subcat{ D_4^{(n,+)}\boxtimes \Vec(\Z{6M})}_{\langle P \boxtimes 2M \rangle}$ is a $\Z{M}$-graded extension of $D_4^{(\pm n,\mu)}$ for $\mu$ an undetermined sign. The subtlety of this argument comes in determining $\mu$.

The categories $ D_4^{(n,\pm)}$ contain a morphism $S \in \operatorname{Hom}( {f^{(1)}}^{\otimes 4} \to \mathbf{1})$, distinguished (up to scaler) by the property that the one click rotation of $S$ is equal to $\pm i S$. Thus the category $\subcat{ D_4^{(n,\kappa)}\boxtimes \LVec(\Z{6M})}$ contains the morphism $S \boxtimes \id_0 : ( f^{(1)} \boxtimes 9 )^{\otimes 4} \to \mathbf{1}\boxtimes 0$ whose one click rotation scales by $\kappa i$. The free module functor
\[  \subcat{ D_4^{(n,\kappa)}\boxtimes \LVec(\Z{6M}}) \to \subcat{ D_4^{(n,\kappa)}\boxtimes \LVec(\Z{6M}})_{\langle P \boxtimes 2M \rangle}\]
sends the morphism $S \boxtimes \id_0$, to a morphism $f$ that lands in the $D_4^{(\pm n,\mu)}$ subcategory of $\subcat{ D_4^{(n,\kappa)}\boxtimes \LVec(\Z{6M})}_{\langle P \boxtimes 2M \rangle}$. As the morphism $f$ is the image of $S \boxtimes \id_0$, it also has the property that it is a rotational eigenvector, with rotational eigenvalue $\kappa i$. Hence we have that $\mu = \kappa$.
\end{proof}

\begin{lemma}\label{lem:d4plusdif}
Suppose $M$ is divisible by 3, then the categories $ \subcat{ D_4^{(n,\pm)}\boxtimes \LVec(\Z{2M}})$ and $\subcat{ D_4^{(n,\pm)}\boxtimes \LVec(\Z{6M}})_{\langle P \boxtimes 2M \rangle}$ have different fusion rules.
\end{lemma}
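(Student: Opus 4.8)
The plan is to distinguish the two categories by their groups of invertible objects, mirroring the \textbf{$M$ even} case of the proof of Lemma~\ref{lem:aOddDiff}. So first I would pin down the simple objects of $\subcat{D_4^{(n,\pm)}\boxtimes \LVec(\Z{2M})}$. Using the $D_4$ fusion rules $f^{(1)}\otimes f^{(1)}=\mathbf{1}\oplus P\oplus Q$ and $f^{(1)}\otimes P\cong f^{(1)}\otimes Q\cong f^{(1)}$, the subcategory $\otimes$-generated by $f^{(1)}\boxtimes 1$ is seen to have as simple objects the invertibles $P^{i}\boxtimes 2j$ (of Frobenius--Perron dimension $1$) for $i\in\Z{3}$ and $0\le j<M$, together with the objects $f^{(1)}\boxtimes(2j+1)$ of Frobenius--Perron dimension $\sqrt 3$. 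Hence its group of invertible objects is generated by $P\boxtimes 0$, of order $3$, and $\mathbf 1\boxtimes 2$, of order $M$, so it is isomorphic to $\Z{3}\times\Z{M}$; since $3\mid M$, every invertible object has order dividing $\operatorname{lcm}(3,M)=M$.

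Next I would compute the invertibles of $\subcat{D_4^{(n,\pm)}\boxtimes \LVec(\Z{6M})}_{\langle P\boxtimes 2M\rangle}$ by transporting along the free module functor
\[F\colon \subcat{D_4^{(n,\pm)}\boxtimes \LVec(\Z{6M})}\longrightarrow \subcat{D_4^{(n,\pm)}\boxtimes \LVec(\Z{6M})}_{\langle P\boxtimes 2M\rangle}.\]
Because $P\boxtimes 2M$ shifts the $\Z{6M}$-grading by the nonzero element $2M$, the tensor action of $\langle P\boxtimes 2M\rangle\cong\Z{3}$ on simple objects is fixed-point free; consequently $F$ is dominant, preserves Frobenius--Perron dimensions, carries simple objects to simple objects, and satisfies $F(X)\cong F(Y)$ if and only if $Y\cong X\otimes(P\boxtimes 2M)^{\otimes k}$ for some $k$. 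In particular $F$ sends invertibles to invertibles, and $\mathbf 1\boxtimes 2$ --- which lies in this subcategory, appearing in $(f^{(1)}\boxtimes 1)^{\otimes 2}$ --- maps to an invertible object with $F(\mathbf 1\boxtimes 2)^{\otimes k}\cong F(\mathbf 1\boxtimes 2k)$. This is the unit exactly when $\mathbf 1\boxtimes 2k\cong(P\boxtimes 2M)^{\otimes j}$ for some $j\in\{0,1,2\}$; comparing the $D_4^{(n,\pm)}$-component forces $j=0$, whence $2k\equiv 0\pmod{6M}$, i.e. $3M\mid k$. So $F(\mathbf 1\boxtimes 2)$ has order exactly $3M$ (indeed the whole invertible group is $(\Z{3}\times\Z{3M})/\langle P\boxtimes 2M\rangle\cong\Z{3M}$, but one element of order $3M$ already suffices).

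Finally, since $3\mid M$ we have $3M>M$, so $\subcat{D_4^{(n,\pm)}\boxtimes \LVec(\Z{2M})}$ has no invertible object of order $3M$ whereas $\subcat{D_4^{(n,\pm)}\boxtimes \LVec(\Z{6M})}_{\langle P\boxtimes 2M\rangle}$ does; hence the two categories have non-isomorphic groups of invertible objects, and in particular different fusion rules. The one step that needs genuine care is the verification that the $\langle P\boxtimes 2M\rangle$-action is fixed-point free, since this is what makes the free module functor dimension-preserving and lets us identify its orbits with the simple objects of the de-equivariantization; the rest is bookkeeping with the $\Z{2}\times\Z{6M}$-grading under the $\subcat{-}$ construction.
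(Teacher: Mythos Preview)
Your proof is correct and follows essentially the same approach as the paper: both arguments compute the groups of invertible objects, finding $\Z{3}\times\Z{M}$ for the first category and exhibiting an invertible of order $3M$ in the de-equivariantization (the paper writes this object explicitly as $(\mathbf{1}\boxtimes 2)\oplus(P\boxtimes 2M{+}2)\oplus(Q\boxtimes 4M{+}2)$, which is precisely your $F(\mathbf{1}\boxtimes 2)$), then concludes via $3\mid M$ that $\Z{3}\times\Z{M}\not\cong\Z{3M}$.
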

\begin{proof}
It is direct to see that the invertible elements of $ \subcat{ D_4^{(n,\pm)}\boxtimes \LVec(\Z{2M}})$ form a group isomorphic to $\Z{3} \times \Z{M}$.

On the other hand the category $\subcat{ D_4^{(n,\pm)}\boxtimes \LVec(\Z{6M}})_{\langle P \boxtimes 2M \rangle}$ has $3M$ invertible elements. One of these is the element
\[ (\mathbf{1} \boxtimes 2) \oplus (P \boxtimes 2M+2)\oplus (Q \boxtimes 4M+2).  \]
A direct computation shows that this object has order 3M. Hence the invertible elements of $\subcat{ D_4^{(n,\pm)}\boxtimes \LVec(\Z{6M}})_{\langle P \boxtimes 2M \rangle}$ form a group isomorphic to $\Z{3M}$.

As $M$ is divisible by 3, the groups $\Z{3}\times\Z{M}$ and $\Z{3M}$ are non-isomorphic. Thus the categories $ \subcat{ D_4^{(n,\pm)}\boxtimes \LVec(\Z{2M}})$ and $\subcat{ D_4^{(n,\pm)}\boxtimes \LVec(\Z{6M}})_{\langle P \boxtimes 2M \rangle}$ have different fusion rules.
\end{proof}

We are now placed to prove the classification result of this subsection.

\begin{lemma}\label{lem:d4}
Fix $n\in \Z{6}^\times / \{\pm \}$. If $\cC$ is a $\Z{M}$-graded extension of $\ad(D_4^{(n)})$, $\otimes$-generated by an object of Frobenius-Perron dimension less than 2, then either $M$ is even, and, up to twisting the associator of $\cC$ by an element of $H^3(\Z{M}, \mathbb{C}^\times)$, the category $\cC$ is monoidally equivalent to one of: 
\[
\subcat{ D_4^{(n,\pm)} \boxtimes \LVec(\Z{M}) },
\]
or $6$ divides $M$ and, up to twisting the associator of $\cC$ by an element of $H^3(\Z{M}, \mathbb{C}^\times)$, the category $\cC$ is monoidally equivalent to
\[   \subcat{  D_4^{(n,\pm)} \boxtimes \LVec(\Z{3M})}_{ \langle P \boxtimes M \rangle }.\]
\end{lemma}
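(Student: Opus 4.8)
The plan is to follow the now-familiar template from the adjoint $A_3$ case (Lemma~\ref{lem:a3}), adapting each step to the larger group $\Inv(\cZ(\ad(D_4^{(n)}))) \cong \Z{3}\times \Z{3}$. First I would classify the group homomorphisms $\Z{M} \to \BrPic(\ad(D_4^{(n)}))$ that could possibly give rise to an extension $\otimes$-generated by an object of Frobenius-Perron dimension less than $2$. Consulting Table~\ref{table:bim}, the Brauer-Picard group of $\ad(D_4^{(n)})$ is large (it is $\Z{3}\rtimes\Z{2}$ enlarged by the $E_7$-type bimodules when $N=5$, but for $N=2$ it is generated by $D_4^\text{even}$, $D_4^\text{odd}$, and the $P\leftrightarrow Q$-twists, so $\Z{2}\times\Z{2}$); using the dimension data from Subsection~\ref{sub:dims}, the only bimodules containing an object of dimension less than $2$ (other than invertibles) are $D_4^\text{odd}$ and $_{P\leftrightarrow Q}D_4^\text{odd}$, each of order $2$, and the twisted-trivial bimodules $_{P\leftrightarrow Q}D_4^\text{even}$ only contain an object of dimension $\tfrac{1+\sqrt5}{2}$, which generates only an adjoint $A_4$-type subcategory and hence cannot generate all of $\cC$. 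So $M$ must be even and the homomorphism sends $1$ to (a twist of) $D_4^\text{odd}$.

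Next I would compute the relevant cohomology group. The bimodule $D_4^\text{odd}$ acts on $\Z{3}\times\Z{3}$ by swapping the two factors (from Table~\ref{tab:Inv}), so for the homomorphism $c$ with $c(1) = D_4^\text{odd}$ the $\Z{M}$-action on $\Inv(\cZ)$ is via this order-$2$ swap. A standard group cohomology computation then gives $H^2(\Z{M}, \Z{3}\times\Z{3})$ under this action: when $3 \nmid M$ it is trivial (the module is cohomologically trivial because $3$ is invertible mod $|\Z{M}|$ considerations interact with the order-$2$ twist so that only the $M$ even part matters and that contributes nothing of order $3$), and when $3 \mid M$ it is $\Z{3}$. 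I would spell this out just enough to extract the bound: for a fixed $c$, up to twisting the associator there are at most $|H^2(\Z{M},\Inv(\cZ))|$ extensions realising $c$, namely one when $3 \nmid M$ and three when $3 \mid M$.

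Then I would produce representatives realising the bound. The ``trivial'' extension $\subcat{D_4^{(n,\pm)}\boxtimes\LVec(\Z{M})}$ (two of these, by the $\pm$ choice, which are inequivalent even up to twisting the associator by Corollary~\ref{lem:d2ndiff}/Lemma~\ref{lem:d2ndiff} applied to the $D_4$ fusion rules) handles the generic case. When $3\mid M$ the de-equivariantization $\subcat{D_4^{(n,\pm)}\boxtimes\LVec(\Z{3M})}_{\langle P\boxtimes M\rangle}$ is, by Lemma~\ref{lem:d4plusext}, a $\Z{M}$-graded extension of $\ad(D_4^{(n)})$, and by Lemma~\ref{lem:d4plusdif} it has different fusion rules from $\subcat{D_4^{(n,\pm)}\boxtimes\LVec(\Z{M})}$, hence is inequivalent even up to twisting the associator; the sign $\pm$ again survives into these de-equivariantizations by the argument tracking the rotational eigenvector in Lemma~\ref{lem:d4plusext}, so we get two more. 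Combining: $3\nmid M$ gives exactly the two categories $\subcat{D_4^{(n,\pm)}\boxtimes\LVec(\Z{M})}$, and $3\mid M$ gives in addition $\subcat{D_4^{(n,\pm)}\boxtimes\LVec(\Z{3M})}_{\langle P\boxtimes M\rangle}$, matching the claimed list.

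The main obstacle I anticipate is not the bimodule/dimension bookkeeping (routine, given Tables~\ref{table:bim} and \ref{tab:Inv}) but the cohomology computation of $H^2(\Z{M}, \Z{3}\times\Z{3})$ with the swap action together with the subtlety — already flagged in the excerpt's discussion of $\Vec(\Z{5})$ and mentioned explicitly for the $D_4$ case in \cite{1711.00645} — that inequivalent $\Z{M}$-graded extensions can be monoidally equivalent. In the present statement this is sidestepped because we only classify up to twisting the associator and we distinguish the survivors by genuinely different fusion rules (orders of the group of invertibles: $\Z{3}\times\Z{M}$ versus $\Z{3M}$) and by the rotational-eigenvalue obstruction for the $\pm$; so the proof should conclude cleanly once the $H^2$ count is pinned down, but I would be careful to state the count only as an upper bound and then match it exactly with the explicit constructions rather than claim the cocycle action is free.
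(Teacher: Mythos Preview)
Your outline has a genuine counting gap that the paper's proof closes with an extra step you have not included.

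For each of the two admissible homomorphisms $c$ (namely $1\mapsto D_4^{\mathrm{odd}}$ and $1\mapsto{}_{P\leftrightarrow Q}D_4^{\mathrm{odd}}$) you correctly find $H^2(\Z{M},\Inv(\cZ(\ad(D_4^{(n)}))))\cong\Z{3}$ when $6\mid M$, giving an upper bound of \emph{three} extensions per homomorphism up to twisting the associator. But you then construct only \emph{two} categories per homomorphism (the Deligne product and the de-equivariantization), and declare this ``matches the claimed list''. It does not: $2<3$. Your final paragraph asserts the subtlety about monoidally equivalent but extension-inequivalent categories is ``sidestepped''; on the contrary, it is precisely what is needed here. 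The paper writes down explicit representatives $T_P,T_Q$ (respectively $V_P,V_Q$) for the two nontrivial cohomology classes, checks that the auto-equivalence $P\leftrightarrow Q$ of $\ad(D_4^{(n)})$ exchanges them, and then invokes \cite[Theorem~3.1]{1711.00645} (using that $\BrPic$ is abelian and that $|\TenAut|=2$ is coprime to $|H^2|=3$) to conclude that up to \emph{monoidal} equivalence there are at most two extensions per homomorphism. Only then does the count match the constructions. Without this reduction your argument leaves open the possibility of a third, unaccounted-for extension for each $c$.

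Two smaller points. First, the action of $D_4^{\mathrm{odd}}$ on $\Inv(\cZ)\cong\Z{3}\times\Z{3}$ is not the swap of the two factors; Table~\ref{tab:Inv} says it applies $P\leftrightarrow Q$ (i.e.\ inversion) to the \emph{second} factor only, while ${}_{P\leftrightarrow Q}D_4^{\mathrm{odd}}$ inverts the first. By luck the resulting $H^2$ is the same, but the explicit cocycles you would write down, and the way $\TenAut$ acts on them, differ, and you need the correct action to carry out the reduction above. Second, there is no object of dimension $\tfrac{1+\sqrt{5}}{2}$ in any $D_4$ bimodule; for $N=2$ every simple in $D_4^{\mathrm{even}}$ and ${}_{P\leftrightarrow Q}D_4^{\mathrm{even}}$ is invertible (you have imported the $D_6$ dimensions). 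The conclusion that only the two odd bimodules are relevant is still correct, but the reason is simply that the even bimodules are pointed.
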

\begin{proof}
As in the general adjoint $D_{2N}$ case, we can restrict to $M$ even, and the homomorphisms $\Z{M} \to \BrPic(\ad(D_4^{(n)}))$ defined by
\[ 1 \mapsto D_4^\text{odd}\text{ and } 1 \mapsto _{P\leftrightarrow Q}D_4^\text{odd}.  \]

We split the proof in to two cases.

\begin{trivlist}\leftskip=2em
\item{ \textbf{Case $1 \mapsto D_4^\text{odd}$:}

With this choice of homomorphism we compute that 
\[  H^2(\Z{M} , \Inv(\cZ(\ad(D_4^{(n)}))))  = \begin{cases}
\{e\} \text{ if } M \text{ is not divisible by 6},\\
\Z{3} \text{ if } M \text{ is divisible by 6}.
\end{cases}\]
Representatives of the non-trivial cocycles are given by:
\[    T_P(n,m) :=   \begin{cases}
\mathbf{1} \boxtimes \mathbf{1} \text{ if } n+m < M\\
\mathbf{1}\boxtimes P  \text{ if } n+m \geq M,
\end{cases} 
\]
and
\[    T_Q(n,m) :=   \begin{cases}
\mathbf{1} \boxtimes \mathbf{1} \text{ if } n+m < M\\
\mathbf{1}\boxtimes Q  \text{ if } n+m \geq M.
\end{cases} 
\]
The group of monoidal auto-equivalences of $\ad(D_4^{(n)})$ acts on $H^2(\Z{M} , \Inv(\cZ(\ad(D_4^{(n)}))))$ via the map 
\begin{align*}
      \TenAut(\ad(D_4^{(n)}) ) &\to \BrPic(    \ad(D_4^{(n)})   ) \\
      \mathcal{F}\quad &\mapsto\quad   _\mathcal{F}D_4^\text{even}
\end{align*}
and the standard action of $\BrPic( \ad(D_4^{(n)}))$ on $\Inv( \cZ ( \ad(D_4^{(n)})))$ described in Table~\ref{tab:Inv}. We compute that the monoidal auto-equivalence $P\leftrightarrow Q$ of $\ad(D_4^{(n)})$ acts on the cocycle $T_P$ to give the cocyle $T_Q$. Thus, up to action of $\TenAut(\ad(D_4^{(n)}))$, there are exactly two elements of $H^2(\Z{M} , \Inv(\cZ(\ad(D_4^{(n)}))))$ when $M$ is divisible by 6, and one element otherwise. Thus, as $\BrPic(\ad(D_4^{(n)}))$ is abelian and $| \TenAut(\ad(D_4^{(n)})) | = 2$ is coprime to $| H^2(\Z{M} , \Inv(\cZ(\ad(D_4^{(n)})))) |=3$, we can apply \cite[Theorem 3.1]{1711.00645} to see that up to monoidal equivalence and twisting the associator, there are at most two $\Z{M}$-graded extensions of $\ad(D_4^{(n)})$ corresponding to the homomorphism $1 \mapsto D_4^\text{odd}$ when $M$ is divisible by 6, and only one extension otherwise.

When $M$ is not divisible by 6, the unique $\Z{M}$-graded extension is realised by the category
\[   \subcat{ D_4^{(n,+)} \boxtimes \LVec(\Z{M}) }.  \]

When $M$ is divisible by 6, the two $\Z{M}$-graded extensions are realised by the categories
\[   \subcat{ D_4^{(n,+)} \boxtimes \LVec(\Z{M}) } \text{ and }  \subcat{ D_4^{(n,+)}\boxtimes \LVec(\Z{6M}})_{\langle P \boxtimes 2M \rangle}.  \]
The latter category is a $\Z{M}$-graded extension of $ \ad(D_4^{(n)})$ that corresponds to the homomorphism $1 \mapsto D_4^\text{odd}$ by Lemma~\ref{lem:d4plusext}. These two categories are monoidally inequivalent, even up to twisting the associator, by Lemma~\ref{lem:d4plusdif}.}

\vspace{2em}\item{
\textbf{Case $1 \mapsto _{P\leftrightarrow Q}D_4^\text{odd}$:}

With this choice of homomorphism we again compute that 
\[  H^2(\Z{M} , \Inv(\cZ(\ad(D_4^{(n)}))))  = \begin{cases}
\{e\} \text{ if } M \text{ is not divisible by 6},\\
\Z{3} \text{ if } M \text{ is divisible by 6}.
\end{cases}\]
We now have different representatives of the non-trivial cocycles, given by
\[    V_P(n,m) :=   \begin{cases}
\mathbf{1} \boxtimes \mathbf{1} \text{ if } n+m < M\\
P \boxtimes \mathbf{1}  \text{ if } n+m \geq M,
\end{cases} 
\]
and
\[    V_Q(n,m) :=   \begin{cases}
\mathbf{1} \boxtimes \mathbf{1} \text{ if } n+m < M\\
Q \boxtimes \mathbf{1}  \text{ if } n+m \geq M.
\end{cases} 
\]
 The monoidal auto-equivalence $P\leftrightarrow Q$ of $\ad(D_4^{(n)})$ acts on $V_P$ to give $V_Q$. Thus we again have that up to monoidal equivalence and twisting the associator, there are at most two $\Z{M}$-graded extensions of $\ad(D_4^{(n)})$ corresponding to the homomorphism $1 \mapsto _{P\leftrightarrow Q}D_4^\text{odd}$ when $M$ is divisible by 6, and only one extension otherwise.

 When $M$ is not divisible by 6, the unique $\Z{M}$-graded extension is realised by the category
\[   \subcat{ D_4^{(n,-)} \boxtimes \LVec(\Z{M}) }.  \]

 When $M$ is divisible by 6, the two $\Z{M}$-graded extensions are realised by the categories
\[   \subcat{ D_4^{(n,-)} \boxtimes \LVec(\Z{M}) } \text{ and }  \subcat{ D_4^{(n,-)}\boxtimes \LVec(\Z{6M}})_{\langle P \boxtimes 2M \rangle}.  \]}
\end{trivlist}
\end{proof}

\subsection*{Cyclic extensions of categories of adjoint $D_{10}$ type}
The adjoint $D_{10}$ case proves to be one of the most interesting cases, as the Brauer-Picard group is $S_3 \times S_3$, and thus there are many interesting cyclic subgroups. In particular there are $\Z{6}$ subgroups, which suggests the possible existence of an exotic extension of $\ad(D_{10}^{(n)})$. In this subsection we show that this $\Z{6}$-graded extension does in fact exist, and is realised by $\mathcal{E}_{16,6}$, the exceptional quantum subgroup of $\mathfrak{sl}_2$ at level $16$ crossed with $\mathfrak{sl}_3$ at level 6. To begin this subsection we give a construction of the categories $\mathcal{E}_{16,6}$.

Consider the category $\cC( \mathfrak{sl}_2, 16) \boxtimes \cC( \mathfrak{sl}_3, 6)$ (adopting the notation of \cite[Section 2.3]{MR3808050}). Via the conformal embedding $\mathfrak{sl}(2)_{16} \oplus \mathfrak{sl}(3)_{6} \subset (E_8)_1$, along with \cite[Theorem 5.2]{MR1936496}, there exists a commutative algebra object $A^{(1)} \in \cC( \mathfrak{sl}_2, 16) \boxtimes \cC( \mathfrak{sl}_3, 6)$. The category $\cC( \mathfrak{sl}_2, 16) \boxtimes \cC( \mathfrak{sl}_3, 6)$ is defined over the field $\mathbb{Q}[\zeta_{36}]$. Applying the Galois automorphisms $\zeta_{36}\mapsto \zeta_{36}^n$, for $n \in \{1,5,7\}$ to $\cC( \mathfrak{sl}_2, 16)\boxtimes \cC( \mathfrak{sl}_3, 6)$ gives 3 categories with the same fusion rules as $\cC( \mathfrak{sl}_2, 16) \boxtimes \cC( \mathfrak{sl}_3, 6)$. Furthermore, the Galois automorphism caries the commutative algebra $A^{(1)}$ to a commutative algebra $A^{(n)}$ in each of these categories.
As these Galois conjugates of $\cC( \mathfrak{sl}_2, 16) \boxtimes \cC( \mathfrak{sl}_3, 6)$ are all modular, there exist two central structures on the algebra objects $A^{(n)}$, which we simply call $\pm$. Thus the category of $A$-modules has the structure of a fusion category by \cite{MR2863377}.
 \begin{dfn}\label{def:e166}
 We write $\mathcal{E}^{(n,\pm)}_{16,6}$ for the fusion category of $(A^{(n)},\pm)$-modules in the appropriate Galois conjugate of $\cC( \mathfrak{sl}_2, 16) \boxtimes \cC( \mathfrak{sl}_3, 6)$.
 \end{dfn}

We now show that the categories $\mathcal{E}^{(n,\pm)}_{16,6}$ realise the interesting possible $\Z{6}$-graded extensions of $\ad(D_{10}^{(n)})$

\begin{lemma}\label{lem:D10ext}
The categories $\mathcal{E}^{(n,\pm)}_{16,6}$ are $\Z{3}$-graded extensions of $D_{10}^{(n,\pm)}$, for $n = \{1,5,7\}$.
\end{lemma}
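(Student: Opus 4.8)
The plan is to produce the $\Z{3}$-grading on $\mathcal{E}^{(n,\pm)}_{16,6}$ directly from the universal grading of the factor $\cC(\mathfrak{sl}_3,6)$, and then to recognise the trivially graded component as $D_{10}^{(n,\pm)}$. Both halves of the statement (that there \emph{is} a $\Z{3}$-grading, and that the unit component is exactly $D_{10}^{(n,\pm)}$) then follow.

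For the grading: the universal grading group of $\cC(\mathfrak{sl}_3,6)$ is $\Z{3}$ (the $3$-ality grading coming from the centre of $SU(3)$, with trivial component $\ad(\cC(\mathfrak{sl}_3,6))$), so the Deligne product $\cC(\mathfrak{sl}_2,16)\boxtimes\cC(\mathfrak{sl}_3,6)$ carries a faithful $\Z{3}$-grading that is trivial on the first tensor factor. The key point is that the algebra $A^{(1)}$ is supported on the trivially graded component: its simple summands $V_a\boxtimes W_\lambda$, read off the branching rules of the conformal embedding $\mathfrak{sl}(2)_{16}\oplus\mathfrak{sl}(3)_6\subset (E_8)_1$, all have $W_\lambda$ of trivial $3$-ality. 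Since Galois conjugation preserves the fusion rules and the grading, each $A^{(n)}$ is likewise supported on the trivial component. A connected étale algebra supported on the trivial component of a faithfully $G$-graded braided fusion category yields a category of modules that inherits the $G$-grading, with unit component the category of modules inside the trivial component of the ambient category (this is the graded-extension formalism recalled above and in \cite{MR2677836}); faithfulness of the induced $\Z{3}$-grading on $\mathcal{E}^{(n,\pm)}_{16,6}$ is immediate because the free $A^{(n)}$-module on an object $\mathbf{1}\boxtimes W_\lambda$ with $W_\lambda$ of non-trivial $3$-ality is non-zero and of non-trivial degree.

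To identify the unit component $\mathcal{F}:=(\mathcal{E}^{(n,\pm)}_{16,6})_0$ with $D_{10}^{(n,\pm)}$, note first that a Frobenius--Perron dimension count, using the value of $\FPdim(A^{(1)})$ extracted from the branching, gives $\FPdim(\mathcal{F})=\tfrac{1}{3}\FPdim(\mathcal{E}^{(n,\pm)}_{16,6})=\tfrac12\FPdim(\cC(\mathfrak{sl}_2,16))=\FPdim(D_{10}^{(n,\pm)})$. To upgrade this to a monoidal equivalence I would exhibit a dominant tensor functor onto $\mathcal{F}$: the connected étale algebra $(f^{(0)}\oplus f^{(16)})\boxtimes\mathbf{1}$ of $\cC(\mathfrak{sl}_2,16)\boxtimes\cC(\mathfrak{sl}_3,6)$ is a subalgebra of $A^{(n)}$ (the summands $V_0\boxtimes W_{(0,0)}$ and $V_{16}\boxtimes W_{(0,0)}$ both occur), so restriction of modules gives a dominant tensor functor from the category of $(f^{(0)}\oplus f^{(16)})$-modules in $\cC(\mathfrak{sl}_2,16)$ into $\mathcal{F}$. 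The source is the classical $D_{10}$ module category of $\cC(\mathfrak{sl}_2,16)$, which has $D_{10}$ fusion rules; comparing Frobenius--Perron dimensions forces the functor to be an equivalence, so $\mathcal{F}$ has $D_{10}$ fusion rules and is pivotal (indeed ribbon, being a module category over a modular category). Applying Proposition~\ref{lem:adclass} to $\ad(\mathcal{F})$, together with the fact that the full fusion categories with $D_{10}$ fusion rules and fixed adjoint subcategory form a finite list indexed by $(m,\pm)$, yields $\mathcal{F}\simeq D_{10}^{(m,\pm)}$ for some parameters. Finally I would match parameters: the quantum parameter $q=e^{\pi i n/16}$ is carried along by the $n$-th Galois conjugate, forcing $m=n$, and the sign is pinned down by tracking the rotational eigenvalue of the canonical morphism $S\in\Hom({f^{(1)}}^{\otimes 4}\to\mathbf{1})$ through the restriction functor, exactly as in the proof of Lemma~\ref{lem:d4plusext}.

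The main obstacle is the parameter matching in the last step, not the grading: the grading is essentially bookkeeping once the $(E_8)_1$ branching data is in hand. The sign $\pm$ is a ribbon/rotational invariant invisible to the fusion rules, so it must be followed carefully through the conformal-inclusion construction and, in particular, one needs to understand how the Galois automorphism $\zeta_{36}\mapsto\zeta_{36}^{n}$ rescales the twists of $\cC(\mathfrak{sl}_2,16)$; the rotational-eigenvector argument of Lemma~\ref{lem:d4plusext} is the right tool, but it requires these twists explicitly. A secondary technical point is checking that $(f^{(0)}\oplus f^{(16)})\boxtimes\mathbf{1}$ is a subalgebra of $A^{(n)}$ compatibly with the chosen central structure $\pm$ on $A^{(n)}$, which again comes down to an explicit inspection of the branching and of the two half-braidings.
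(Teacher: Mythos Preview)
Your route is genuinely different from the paper's. The paper does not establish the $\Z{3}$-grading from the $\mathfrak{sl}_3$ factor at all; instead it works internally in $\mathcal{E}^{(n,\pm)}_{16,6}$: it lists the categorical dimensions of the 24 simples (using the known decomposition of $\mathcal{E}^{(1,+)}_{16,6}$ as an $A_{17}$-module into $D_{10}\oplus E_7\oplus E_7$), observes that exactly three of them have dimension $[2]_q$, so by parity one of them is self-dual, and then invokes the $ADE$ classification of graphs of norm less than $2$ to force the fusion graph of this object to be $D_{10}$ (ruling out $A_{17}$ by a missing-dimension check). That identifies the $D_{10}^{(v,\delta)}$ subcategory directly. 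The parameter $v$ is read off from the categorical dimension, and the sign $\delta$ is pinned down not by a rotational-eigenvalue argument but by computing a specific higher Frobenius--Schur indicator $\nu_{4N+4}(X)$ in two ways: once in $D_{10}^{(v,\delta)}$ via the planar algebra, and once via the modular data of $\cZ(\mathcal{E}^{(n,\pm)}_{16,6})$, which is known from \cite{MR3039775} to be a Galois conjugate of $\cC(\mathfrak{sl}_2,16)\boxtimes\cC(\mathfrak{sl}_3,6)^{\mathrm{bop}}$ or its reverse.

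Your grading argument is cleaner than anything the paper writes down explicitly, but the identification of the unit piece has a real gap. First, ``restriction of modules'' goes the wrong way: for a subalgebra $B\subset A$ one restricts from $A$-mod to $B$-mod, not the reverse. The functor you actually want is the free $A$-module functor $A\otimes_B(-)$ out of $B$-mod $\simeq D_{10}^{(\cdot,\cdot)}\boxtimes\cC(\mathfrak{sl}_3,6)$. Restricted to $D_{10}\boxtimes\Vec$ this does give a tensor functor $D_{10}\to\mathcal{F}$ landing in the trivial grade, but you have not shown it is dominant (the full free-module functor is dominant, its restriction to $D_{10}\boxtimes\Vec$ need not be), nor fully faithful. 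A Frobenius--Perron dimension match alone does not force an equivalence without one of these. You could repair this by checking that $A$, viewed as an object of $B$-mod, has no summand of the form $X\boxtimes\mathbf{1}$ with $X\ne f^{(0)}$; equivalently, that the only summand $V_a\boxtimes W_{(0,0)}$ of $A$ with trivial $\mathfrak{sl}_3$-label has $a\in\{0,16\}$. This is again a branching-rule check, but it is the one that actually makes your functor fully faithful. For the sign, the paper's Frobenius--Schur computation against the known modular centre is more direct than tracking rotational eigenvalues through two layers of module categories, and I would recommend that route.
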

\begin{proof}
We first compute the categorical dimensions of the simple objects of $\mathcal{E}^{(n,\pm)}_{16,6}$. The remark in the proof of \cite[Theorem 6]{MR1976459}, states that as a module over $\cC( \mathfrak{sl}_2, 16)$, the category $\mathcal{E}^{(1,+)}_{16,6}$ is a sum of a $D_{10}$ module, and two $E_7$ modules. This fact allows us to compute the Frobenius-Perron dimensions of the simple objects of $\mathcal{E}^{(n,\pm)}_{16,6}$. Coupled with the fact that the category $\mathcal{E}^{(n,\pm)}_{16,6}$ takes a functor from the appropriate Galois conjugate of $\cC( \mathfrak{sl}_2, 16) \boxtimes \cC( \mathfrak{sl}_3, 6)$, we get that the categorical dimensions of the 24 simple objects of $\mathcal{E}^{(n,\pm)}_{16,6}$ are, for $q = e^{n\frac{2i\pi}{18}}$:
\begin{align*}
\{&1 , [2]_q,[3]_q,[4]_q,[5]_q,[6]_q,[7]_q,[8]_q,[9]_q,[3]_q,[3]_q, \\
 &[2]_q, [3]_q+1,[4]_q+[2]_q,[5]_q+[3]_q,[2]_q + q^3 + q^{-3},[6]_q, 1 + q^4 + q^{-4}, \\
&[2]_q, [3]_q+1,[4]_q+[2]_q,[5]_q+[3]_q,[2]_q + q^3 + q^{-3},[6]_q, 1 + q^4 + q^{-4} \}.
\end{align*}
In particular we see that there are three objects of dimension $[3]_q$, thus at least one of these objects is self-dual. Let $X$ be this self-dual object, and $\cC_X$ the fusion subcategory of $\mathcal{E}^{(n,\pm)}_{16,6}$  generated by $X$. By the classification of undirected graphs of norm less than 2, the fusion graph for $X\in \cC_X$ must be either the $A_{17}$ or $D_{10}$ graph. If this fusion graph was $A_{17}$, then there must be an object of Frobenius-Perron dimension $\approx 5.75877$ in $\cC_X$, and thus in $\mathcal{E}^{(n,\pm)}_{16,6}$. However consulting the above list of categorical dimensions in $\mathcal{E}^{(n,\pm)}_{16,6}$ shows that there is no such object, thus the fusion graph for $X \in \cC_X$ must be the $D_{10}$ graph.

The category $\mathcal{E}^{(n,\pm)}_{16,6}$ is pivotal \cite[Theorem 1.17]{MR1936496}, and thus the subcategory $\cC_X$ inherits a pivotal structure. As the object $X$ tensor generates $\cC_X$, and the $D_{10}$ graph is bipartite, we must have that $\cC_X$ is $\Z{2}$-graded, thus there exist two pivotal structures on $\cC_X$. With respect to one of these pivotal structures, the object $X$ is symmetrically self-dual. Thus by Theorem~\ref{thm:ADET}, $\cC_X$ is equivalent to the category $D_{10}^{(v,\delta)}$ for some $v \in \Z{18}^\times$ and choice of sign $\delta$. As the categorical dimension of $X$ is $[2]_q$ we can deduce that $v = \pm n$. 

Finally we have to show that the $\pm$ sign in $\mathcal{E}^{(n,\pm)}_{16,6}$ agrees with $\delta$. For this we use Frobenius-Schur indicators, defined in \cite{MR2313527}. Again we pick out the unique symmetrically self-dual object $X$. This corresponds to the object $f^{(1)}$ in the subcategory $D_{10}^{(v,\delta)}$. Using the planar algebra presentation of the category $D_{10}^{(v,\delta)}$ \cite{MR2559686}, we can easily compute that the $4N+4$-th Frobenius-Schur indicator of the object $f^{(1)}$ as
\[   \nu_{4N+4}(f^{(1)}) = \delta i.\]
Hence we must also have in $\mathcal{E}^{(n,\pm)}_{16,6}$ that 
\[   \nu_{4N+4}(X) = \delta i.\]
Using \cite[Theorem 4.1]{MR2313527} we can also express $\nu_{4N+4}(X)$ in terms of the modular data of $\cZ(\mathcal{E}^{(n,\pm)}_{16,6})$. As the categories $\mathcal{E}^{(1,\pm)}_{16,6}$ are quantum subgroups of $\cC( \mathfrak{sl}_2, 16) \boxtimes \cC( \mathfrak{sl}_3, 6)$, we can use \cite[Corollary 3.30]{MR3039775} to show that
\[ \cZ(\mathcal{E}^{(1,+)}_{16,6}) = \cC( \mathfrak{sl}_2, 16) \boxtimes \cC( \mathfrak{sl}_3, 6)^\text{bop} \text{ and } \cZ(\mathcal{E}^{(1,-)}_{16,6}) = \cC( \mathfrak{sl}_2, 16)^\text{bop} \boxtimes \cC( \mathfrak{sl}_3, 6).\]
The Drinfeld centres of $ \cZ(\mathcal{E}^{(n,\pm)}_{16,6})$ for $n \in \{5,7\}$ are obtained by applying the Galois automorphism $\zeta_{36}\mapsto \zeta_{36}^n$ to the braided categories $ \cZ(\mathcal{E}^{(1,\pm)}_{16,6})$.
A calculation now shows that  $ \nu_{4N+4}(X) = \pm i$. Hence $\delta$ agrees with the sign of $\mathcal{E}^{(n,\pm)}_{16,6}$.
\end{proof}
While not a-priori, the categories $\mathcal{E}^{(n,\pm)}_{16,6}$ contain a $\otimes$-generating object of Frobenius-Perron dimension $2\cos\left(\frac{\pi}{18}\right)$. We prove this in Appendix~\ref{app:rules}, where we deduce the fusion rules for the categories $\mathcal{E}^{(n,\pm)}_{16,6}$.

Now that we have shown the categories $\mathcal{E}^{(n,\pm)}_{16,6}$ realise interesting $\Z{6}$-graded extensions of $\ad(D_{10}^{(n)})$, we can prove the main classification result of this section.
\begin{lemma}\label{lem:d10}
Fix $n\in \Z{18}^\times / \{\pm \}$. If $\cC$ is a $\Z{M}$-graded extension of $\ad(D_{10}^{(n)})$, $\otimes$-generated by an object of Frobenius-Perron dimension less than 2, then either $M$ is even, and, up to twisting the associator of $\cC$ by an element of $H^3(\Z{M}, \mathbb{C}^\times)$, the category $\cC$ is monoidally equivalent to: 
\[
\subcat{ D_{10}^{(n,\pm)} \boxtimes \LVec(\Z{M}) },
\]
or $6$ divides $M$ and, up to twisting the associator of $\cC$ by an element of $H^3(\Z{M}, \mathbb{C}^\times)$, the category $\cC$ is monoidally equivalent to
\[   \subcat{\mathcal{E}^{(n,\pm)}_{16,6} \boxtimes \LVec(\Z{M})}.\]
\end{lemma}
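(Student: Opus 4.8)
The proof follows the same template as the earlier cases. From \cite{MR3808052} (Tables~\ref{table:bim} and \ref{tab:Inv}) we have $\BrPic(\ad(D_{10}^{(n)})) \cong S_3 \times S_3$ and $\TenAut(\ad(D_{10}^{(n)})) \cong S_3$, and, since we are in the regime $N = 5 > 2$, the group $\Inv(\cZ(\ad(D_{10}^{(n)})))$ is trivial. Hence $H^2(\Z{M}, \Inv(\cZ(\ad(D_{10}^{(n)})))) = \{e\}$ for every $M$: by \cite{MR2677836} the obstruction $o_3(c)$ of any homomorphism $c \colon \Z{M} \to \BrPic(\ad(D_{10}^{(n)}))$ vanishes automatically, each such $c$ produces a single $\Z{M}$-graded extension up to twisting the associator, and by \cite{1711.00645} it suffices to consider $c$ up to conjugation by the image of $\TenAut(\ad(D_{10}^{(n)}))$ in $\BrPic(\ad(D_{10}^{(n)}))$. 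So the whole problem is to determine the admissible $c$, exhibit the corresponding extensions, and cut down to monoidal equivalence.

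Next I would cut down the admissible $c$. A $\otimes$-generating object of Frobenius--Perron dimension less than $2$ must lie in a graded component $\cC_g$ with $g$ of full order in $\Z{M}$, hence (after relabelling) in $\cC_1 = c(1)$, so $c(1)$ is an invertible bimodule over $\ad(D_{10}^{(n)})$ possessing a simple object of Frobenius--Perron dimension less than $2$. Consulting Subsection~\ref{sub:dims}, the only such bimodules are the twists of $D_{10}^{\text{odd}}$, $E_7^{\text{even}}$ and $\overline{E_7}^{\text{even}}$: the twists of $E_7^{\text{odd}}$ and $\overline{E_7}^{\text{odd}}$ have no simple of dimension less than $2$, and the twists of the trivial bimodule $D_{10}^{\text{even}}$ contain only the tensor unit, which cannot $\otimes$-generate $\ad(D_{10}^{(n)})$ since that category has a simple object $f^{(6)}$ of dimension $[7]_q > 2$. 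By Table~\ref{table:bim} all of these bimodules have order $2$ or $6$ in $\BrPic(\ad(D_{10}^{(n)}))$; consequently $M$ is even, and $c(1)$ has order exactly $2$ unless $6 \mid M$.

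For the order-$2$ homomorphisms, $c$ factors through $\Z{M} \to \Z{2}$, and the resulting extension is $\subcat{\mathcal{D} \boxtimes \LVec(\Z{M})}$ for the associated $\Z{2}$-graded extension $\mathcal{D}$ of $\ad(D_{10}^{(n)})$. Now $\mathcal{D}$ contains a simple object $X$ of Frobenius--Perron dimension $[2]_q < 2$ sitting in its odd component, and $X$ is self-dual — it is the unique simple of that dimension in the odd component — hence normal, and it $\otimes$-generates $\mathcal{D}$. By Theorem~\ref{thm:ADEext}, $\mathcal{D}$ is a cyclic extension of a category of adjoint $ADE$ type, necessarily a $\Z{2}$-graded extension of $\ad(D_{10}^{(n)})$; comparing the ten Frobenius--Perron dimensions of its simple objects with the fusion categories of $ADE$ type, and invoking the classification of categories with $D_{10}$ fusion rules (Proposition~\ref{lem:adclass} and Theorem~\ref{thm:ADET} applied to $X$ with an appropriate pivotal structure), forces $\mathcal{D} \simeq D_{10}^{(m,\pm)}$ with $m = \pm n$, i.e. $D_{10}^{(n,\pm)}$ up to twisting the associator. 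The two signs remain distinct after twisting by Lemma~\ref{cor:d2ndiff}, so the order-$2$ homomorphisms contribute exactly $\subcat{D_{10}^{(n,\pm)} \boxtimes \LVec(\Z{M})}$ (equivalently, the candidate order-$2$ homomorphisms collapse to two conjugacy classes under $\TenAut(\ad(D_{10}^{(n)}))$).

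For the order-$6$ homomorphisms, $6 \mid M$, the image of $c$ is one of the two conjugacy classes of cyclic subgroups $\Z{6} \le S_3 \times S_3$, $c$ factors through $\Z{M} \to \Z{6}$, and the extension is $\subcat{\mathcal{D} \boxtimes \LVec(\Z{M})}$ for the associated $\Z{6}$-graded extension $\mathcal{D}$ of $\ad(D_{10}^{(n)})$. By Lemma~\ref{lem:D10ext}, $\mathcal{E}_{16,6}^{(n,\pm)}$ is a $\Z{3}$-graded extension of $D_{10}^{(n,\pm)}$, hence a $\Z{6}$-graded extension of $\ad(D_{10}^{(n)})$ whose homomorphism has order-$6$ image and restricts on the index-$3$ subgroup to the order-$2$ homomorphism of $D_{10}^{(n,+)}$, respectively $D_{10}^{(n,-)}$; since each homomorphism has a unique extension up to twisting and the two order-$6$ conjugacy classes are distinguished by this order-$2$ sub-quotient, every such $\Z{6}$-graded extension is $\mathcal{E}_{16,6}^{(n,+)}$ or $\mathcal{E}_{16,6}^{(n,-)}$ up to twisting, and the order-$6$ homomorphisms contribute exactly $\subcat{\mathcal{E}_{16,6}^{(n,\pm)} \boxtimes \LVec(\Z{M})}$. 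The main obstacle is precisely this last matching step — verifying that both order-$6$ conjugacy classes of $\Z{6}$ subgroups are actually realised, and pairing them correctly with the two signs of $\mathcal{E}_{16,6}$ — which is exactly what the Frobenius--Schur indicator computation in the proof of Lemma~\ref{lem:D10ext} supplies; a secondary subtlety is the reduction in the order-$2$ case, where a priori there are six admissible bimodules but only two categories survive up to twisting. Assembling the order-$2$ and order-$6$ analyses, and recording that only order-$2$ homomorphisms occur when $M$ is even but not divisible by $6$, yields the stated classification.
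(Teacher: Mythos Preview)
Your proposal follows the paper's template closely and the order-$6$ analysis matches: the paper also reduces to two conjugacy classes of order-$6$ homomorphisms, exhibits $\mathcal{E}_{16,6}^{(n,\pm)}$ as the two corresponding $\Z{6}$-extensions (via Lemma~\ref{lem:D10ext}), and distinguishes them by their $D_{10}^{(n,\pm)}$ subcategories exactly as you do.

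The difference, and the one genuine gap, is in the order-$2$ case. The paper handles it purely at the level of homomorphisms: it lists the eighteen admissible bimodules, reduces them via \cite{1711.00645} to four representatives --- two of order $2$ (namely $D_{10}^{\text{odd}}$ and ${}_{P\leftrightarrow Q}D_{10}^{\text{odd}}$) and two of order $6$ --- and then simply exhibits $\subcat{D_{10}^{(n,\pm)}\boxtimes\Vec(\Z{M})}$ as the two order-$2$ extensions, invoking Lemma~\ref{lem:d2ndiff} for inequivalence. Your primary argument instead tries to classify the intermediate $\Z{2}$-extension $\mathcal{D}$ directly, by applying Theorem~\ref{thm:ADET} and Proposition~\ref{lem:adclass} to the self-dual object $X$. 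But both of those results require a pivotal structure on $\mathcal{D}$, and you have not produced one: an arbitrary $\Z{2}$-graded extension of a pivotal category need not be pivotal, and ``with an appropriate pivotal structure'' is exactly the unjustified step. Worse, classifying $\Z{2}$-extensions of $\ad(D_{10}^{(n)})$ is the $M=2$ case of the very lemma you are proving, so without an external input (such as the $ADET$ theorem, which needs pivotality) the argument is circular.

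Your parenthetical remark --- that the six order-$2$ admissible bimodules collapse to two conjugacy classes under $\TenAut(\ad(D_{10}^{(n)}))$ --- is the correct fix and is precisely the paper's route; you should promote it to the main argument and drop the appeal to Theorem~\ref{thm:ADET}.
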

\begin{proof}
We begin by classifying homomorphisms from the cyclic group $\Z{M}$ to $\BrPic(\ad(D^{(n)}_{10}))$ that may give rise to extensions generated by an object of dimension less than 2. Consulting the table of dimensions from Subsection~\ref{sub:dims} shows that the only bimodules over $\ad(D^{(n,+)}_{10})$ that contain an object of Frobenius-Perron dimension less than 2, are $D_{10}^{\text{odd}}$, ${E_7}^{\text{even}}$, and ${\overline{E_7}}^{\text{even}}$, along with the twistings of each by the 5 non-trivial auto-equivalences of $\ad(D^{(n,+)}_{10})$. This leaves us with a total of 18 homomorphisms to consider. Fortunately \cite[Theorem 3.1]{1711.00645} shows that to get a representative from each monoidal equivalence class, we only need to consider homomorphisms $\Z{M} \to \BrPic(\ad(D^{(n,+)}_{10}))$, considered up to post-composition by the inner automorphisms coming from one of the six bimodules $D_{10}^\text{even}$, $_{P\leftrightarrow Q} D_{10}^\text{even}$, $_{P\leftrightarrow f^{(2)}} D_{10}^\text{even}$, $_{f^{(2)} \leftrightarrow Q} D_{10}^\text{even}$, $_{f^{(2)}\mapsto P\mapsto Q} D_{10}^\text{even}$, and $_{f^{(2)}\mapsto Q \mapsto P} D_{10}^\text{even}$. As the group structure of $\BrPic(\ad(D^{(n,+)}_{10}))$ has been described in \cite{MR3808052}, we can directly compute that we only have to consider the 4 homomorphisms:
\begin{align*}
1\mapsto & D_{10}^{\text{odd}},\\
1\mapsto & _{P\leftrightarrow Q}D_{10}^{\text{odd}},\\
1\mapsto & _{f^{(2)} \rightarrow Q\rightarrow P}{E_7}^{\text{even}},\\
1\mapsto & _{P \leftrightarrow Q}{E_7}^{\text{even}}.
\end{align*}

We finish our proof in two cases.

\begin{trivlist}\leftskip=2em
\item \textbf{Case $M$ is even and $1\mapsto  D_{10}^{\text{odd}}$ or $_{P\leftrightarrow Q}D_{10}^{\text{odd}}$:}

As $H^2(\Z{M} , \Inv(\cZ(\ad(D^{(n)}_{10})))) = \{e\}$, there is a unique extension, up to twisting the associator, corresponding to each of the two homomorphisms. These extensions are realised by the categories
\[
\subcat{ D_{10}^{(n,+)} \boxtimes \LVec(\Z{M}) } \text{ and } \subcat{ D_{10}^{(n,-)} \boxtimes \LVec(\Z{M}) }.
\]
These two categories are non-equivalent, even up to twisting the associator, by Lemma~\ref{lem:d2ndiff}.

\vspace{1em}

\item \textbf{Case 6 divides $M$ and $1\mapsto _{f^{(2)} \rightarrow Q\rightarrow P}{E_7}^{\text{even}}$ or $_{P \leftrightarrow Q}{E_7}^{\text{even}}$:}

Again, as $H^2(\Z{M} , \Inv(\cZ(\ad(D^{(n)}_{10})))) = \{e\}$, there is a unique extension, up to twisting the associator, corresponding to each of the two homomorphisms. These two extensions are realised by the categories
\[   \subcat{\mathcal{E}^{(n,+)}_{16,6} \boxtimes \LVec(\Z{M})} \text{ and }  \subcat{\mathcal{E}^{(n,-)}_{16,6} \boxtimes \LVec(\Z{M})}.\]
By Lemma~\ref{lem:D10ext} these categories are $\Z{3}$-graded extensions of $D_{10}^{(n,+)}$ and $D_{10}^{(n,-)}$ respectively. Thus both categories are $\Z{6}$-graded extensions of $\ad(D_{10}^{(n)})$. 

Aiming towards a contradiction, suppose that the categories 
\[ \subcat{\mathcal{E}^{(n,+)}_{16,6} \boxtimes \LVec(\Z{M})} \text{ and } \subcat{\mathcal{E}^{(n,-)}_{16,6} \boxtimes \LVec(\Z{M})}\]
are equivalent up to twisting the associator. Then this would imply that the respective subcategories $D_{10}^{(n,+)}$ and $D_{10}^{(n,-)}$ are equivalent up to twisting the associator. However this is a contradiction to Lemma~\ref{lem:d2ndiff}.
\end{trivlist}
\end{proof}

\subsection*{Cyclic extensions of categories of adjoint $A_7$ type}
The adjoint $A_7$ case is one of the most interesting cases, as the Brauer-Picard group of $\ad(A_7^{(n)})$ is isomorphic to $D_{2\cdot4}$, and thus contains a cyclic subgroup of order 4. The possible extention of $\ad(A_7^{(n)})$ can not be constructed through Deligne products and de-equivariantizations, as in the previous cases. Instead we construct this interesting extension as a quantum subgroup of $\mathfrak{sl}_4$. We begin this subsection by constructing this quantum subgroup, and showing it is a $\Z{4}$-graded extension of $\ad(A_7^{(n)})$.

Consider the category $\cC( \mathfrak{sl}_4, 4)$ (again adopting the notation of \cite[Section 2.3]{MR3808050}). Via the conformal embedding $\mathfrak{sl}(4)_4 \subset \mathfrak{spin}(15)_1$, along with  \cite[Theorem 5.2]{MR1936496} there exists a commutative algebra object $A^{(1)} \in \cC( \mathfrak{sl}_4, 4)$. The category $\cC( \mathfrak{sl}_4, 4)$ is defined over the field $\mathbb{Q}[\zeta_{16}]$, and applying the Galois automorphism $\zeta_{16}\mapsto \zeta_{16}^3$ gives another category with the same fusion rules. The algebra $A^{(1)}$ is carried by this Galois automorphism, and gives a commutative algebra $A^{(3)}$ in the Galois conjugate of $\cC( \mathfrak{sl}_4, 4)$.

 \begin{dfn}\label{def:e4}
 We write $\mathcal{E}^{(n)}_{4}$ for the fusion category of $ A^{(n)}$-modules in the appropriate Galois conjugate of $\cC( \mathfrak{sl}_4, 4)$.
 \end{dfn}

We now show that the fusion categories $\mathcal{E}^{(n)}_4$ are graded extensions of the fusion categories of adjoint $A_7$ type.

\begin{lemma}\label{lem:a7ext}
The categories $\mathcal{E}^{(n)}_{4}$ are $\Z{4}$-graded extensions of $\ad(A_7^{(n)})$.
\end{lemma}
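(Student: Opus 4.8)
The strategy is to mirror the proof of Lemma~\ref{lem:D10ext}, using the $\mathfrak{sl}_4$-level-$4$ data in place of the $\mathfrak{sl}_2$-level-$16$-crossed-$\mathfrak{sl}_3$-level-$6$ data.

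First I would produce the $\Z{4}$-grading. The category $\cC(\mathfrak{sl}_4, 4)$, and hence its Galois conjugate, is faithfully $\Z{4}$-graded by $N$-ality: a simple object labelled by a weight $\lambda$ lies in the degree recording the number of boxes of $\lambda$ modulo $4$ (equivalently the image of $\lambda$ in $P/Q \cong Z(\mathrm{SU}(4))$), and the trivially graded component is $\ad(\cC(\mathfrak{sl}_4, 4))$. The key step is that the connected étale algebra $A^{(n)}$ coming from the conformal embedding $\mathfrak{sl}(4)_4 \subset \mathfrak{spin}(15)_1$ is supported in this trivially graded component, i.e.\ every $\mathfrak{sl}(4)$-weight occurring in the vacuum module of $\mathfrak{spin}(15)_1$ lies in the root lattice; equivalently $\mathfrak{spin}(15)\ominus\mathfrak{sl}(4)$, the space of ``extra currents'', decomposes as an $\mathfrak{sl}(4)$-module into irreducibles of trivial $N$-ality (one checks it is a sum of copies of the weights $(2,1,0)$ and $(0,1,2)$, both in the root lattice). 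Granting this, $A^{(n)} \in \ad(\cC(\mathfrak{sl}_4, 4))$ preserves the grading, so every indecomposable $A^{(n)}$-module has underlying object concentrated in a single degree; hence $\mathcal{E}_4^{(n)} = (\cC(\mathfrak{sl}_4, 4))_{A^{(n)}}$ is $\Z{4}$-graded, the grading being faithful because the degree-preserving free-module functor $Z \mapsto Z \otimes A^{(n)}$ is surjective on objects, with trivially graded component $\mathcal{D} := (\ad\cC(\mathfrak{sl}_4, 4))_{A^{(n)}}$.

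It then remains to identify $\mathcal{D}$ with $\ad(A_7^{(n)})$. Following the end of the proof of Lemma~\ref{lem:D10ext}, I would compute the categorical (hence Frobenius--Perron) dimensions of the simple objects of $\mathcal{E}_4^{(n)}$ from the decomposition of $A^{(n)}$ together with the free-module functor, check that the fusion ring of $\mathcal{D}$ is the adjoint $A_7$ fusion ring --- by locating inside $\mathcal{D}$ a suitable self-dual object $X$, identifying $\langle X \rangle$ as a category of adjoint $ADE$ type from the dimensions just computed, and eliminating every shape except adjoint $A_7$ --- and then apply Proposition~\ref{lem:adclass} in the adjoint $A_N$ case, which gives $\mathcal{D} \simeq \ad(A_7^{(m)})$ for some $m \in \Z{16}^\times/\{\pm\}$ (a pivotal structure on $\mathcal{D}$ being inherited from $\cC(\mathfrak{sl}_4, 4)$). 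Comparing global Frobenius--Perron dimensions shows $\langle X\rangle = \mathcal{D}$, and matching a single categorical dimension, computed from the modular data of the appropriate Galois conjugate of $\cC(\mathfrak{sl}_4, 4)$, forces $m = n$.

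The main obstacle is this last identification, and within it the determination of the Galois conjugate: that $\mathcal{D}$ is \emph{some} adjoint $A_7$ category is comparatively soft given the dimension count and Proposition~\ref{lem:adclass}, but matching the parameter $m$ with $n$ requires carefully tracking the Galois automorphism $\zeta_{16} \mapsto \zeta_{16}^{3}$ through a categorical-dimension (or Frobenius--Schur indicator) computation, exactly the bookkeeping that occupied the corresponding part of the proof of Lemma~\ref{lem:D10ext}. A secondary technical point is the branching statement used above, that $A^{(n)}$ is supported on the root lattice of $\mathfrak{sl}(4)$.
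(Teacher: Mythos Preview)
Your argument for the $\Z{4}$-grading via $N$-ality, together with the observation that $A^{(n)}$ is supported on the root lattice, is correct and in fact more explicit than what the paper writes; the paper essentially takes the grading for granted after computing the dimensions of the simples in each graded piece. (One small slip: the parameter for $\ad(A_7)$ lies in $\Z{8}^\times/\{\pm\}$, not $\Z{16}^\times/\{\pm\}$.)

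The genuine gap is in your identification of the trivially graded piece $\mathcal{D}$. You propose to find a self-dual simple $X\in\mathcal{D}$, identify $\langle X\rangle$ as a category of adjoint $ADE$ type ``from the dimensions just computed'', and then eliminate all shapes but adjoint $A_7$. This mimics the $D_{10}$ argument too closely: in Lemma~\ref{lem:D10ext} the self-dual object $X$ has Frobenius--Perron dimension below $2$, so the classification of graphs of norm less than $2$ pins down its fusion graph. Here the non-invertible simples of $\mathcal{D}$ have Frobenius--Perron dimension $1+\sqrt{2}>2$, so neither Theorem~\ref{thm:ADET} nor the norm-below-$2$ graph classification applies, and there is no a priori reason $\langle X\rangle$ should be of adjoint $ADE$ type at all. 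Indeed, the dimension list $\{1,\,1+\sqrt{2},\,1+\sqrt{2},\,1\}$ is compatible with \emph{two} rank-four fusion rings: adjoint $A_7$ and the ring $\operatorname{HL}(c=1)$ from \cite{MR3229513}. The paper's proof rules out the latter by an argument you do not have: if $\mathcal{D}$ had $\operatorname{HL}(c=1)$ fusion rules, then the $1$-graded piece of $\mathcal{E}_4^{(n)}$ would furnish a rank-$3$ module category over it, but one checks (using only the fusion ring, via the combinatorial techniques of \cite{MR2909758}) that no such module exists. Once this alternative is excluded, Proposition~\ref{lem:adclass} applies and the categorical-dimension match $m=n$ goes through exactly as you describe.
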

\begin{proof}
We begin by computing the categorical dimensions of the simple objects of $\mathcal{E}^{(n)}_{4}$. Using the forgetful functor $\mathcal{E}^{(n)}_{4}\to \cC( \mathfrak{sl}_4, 4)$ \cite{Su4} we compute these dimensions as:
\[  \{1, [3]_q,[3]_q,1, [2]_q, [2]_q, (q^2 + q^{-2})[2]_q, q^2 + q^{-2} , [3]_q + 1,[2]_q, [2]_q, (q^2 + q^{-2})[2]_q\},    \]
 for $q = e^{n \frac{2 \pi i}{16}}$. In particular, the dimensions the of the $0$-graded piece of this category are
\[ \{1, 1 \pm \sqrt{2}, 1 \pm \sqrt{2}, 1\}. \]
There are precisely two fusion rings compatible with these dimensions. One is the $\ad(A_7)$ fusion ring, while the other is $\operatorname{HL}(c = 1)$, the fusion ring from \cite{MR3229513} with choice of $c=1$.

Aiming for a contradiction, suppose that the 0-graded piece of $\mathcal{E}^{(n)}_{4}$ has fusion ring $\operatorname{HL}(c = 1)$. As the 1-graded piece of the category $\mathcal{E}^{(n)}_{4}$ has rank 3, there exists a rank 3 module category over the $0$-graded piece of $\mathcal{E}^{(n)}_{4}$. However the techniques of \cite{MR2909758} (which rely only on the underlying fusion ring) show that any category with $\operatorname{HL}(c = 1)$ fusion ring does not have a rank 3 module. Hence we have our contradiction, and thus the fusion ring of the 0-graded piece of $\mathcal{E}^{(n)}_{4}$ is the $\ad(A_7)$ fusion ring.

We now use Lemma~\ref{lem:adclass} to see that the 0-graded piece of $\mathcal{E}^{(n)}_{4}$ is equivalent to $\ad(A_7^{(m)})$ for some $m\in \Z{8}^\times / \{\pm\}$. Considering categorical dimensions shows that $m = n$.
\end{proof}
We compute the fusion rules for the categories $\mathcal{E}_4^{(n)}$ in Appendix~\ref{app:rules}. Following Appendix~\ref{app:rules}, we label the 12 simple objects of $\mathcal{E}_4^{(n)}$ by the numbers $\mathbf{1}$ through $\mathbf{12}$. We see from this Appendix that the object $\mathbf{5}$ $\otimes$-generates $\mathcal{E}_4^{(n)}$, and has Frobenius-Perron dimension $\sqrt{2+\sqrt{2}}$.

We can identify the $\ad(A_7^{(n)})$ subcategory of $\mathcal{E}_4^{(n)}$ in two different ways, either by the map
\[  \mathbf{1} \mapsto f^{(0)} ,\quad \mathbf{2} \mapsto f^{(2)}, \quad \mathbf{3} \mapsto f^{(4)}, \quad \text{ and } \mathbf{4} \mapsto f^{(6)},\]
or the map 
\[  \mathbf{1} \mapsto f^{(0)} , \quad \mathbf{2} \mapsto f^{(4)},\quad \mathbf{3} \mapsto f^{(2)},\quad \text{ and } \mathbf{4} \mapsto f^{(6)}.\]
These two identifications give the category $\mathcal{E}_4^{(n)}$ the structure of a $\Z{4}$-graded extension of $\ad(A_7^{(n)})$ in two different ways. We write $( \mathcal{E}_4^{(n)}, +)$ and $(\mathcal{E}_4^{(n)},-)$ for these two extensions, respectively. It turns out that these two extensions are different despite being constructed from the same monoidal category.
\begin{lemma}\label{lem:dobcov}
The extensions $( \mathcal{E}_4^{(n)}, +)$ and $(\mathcal{E}_4^{(n)},-)$ are inequivalent as $\Z{4}$-graded extensions of $\ad(A_7^{(n)})$.
\end{lemma}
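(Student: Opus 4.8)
The plan is to distinguish the two extensions by a grading-equivariant invariant: the action of the grading group $\Z{4}$ on the set of simple objects of the trivially-graded piece $\ad(A_7^{(n)})$, induced by conjugation by an object in the generating graded component $\mathbf{5}$. Concretely, for an object $X$ in the $1$-graded component of $\mathcal{E}_4^{(n)}$, tensoring on the left and right by $X$ (and $X^*$) gives an auto-equivalence of $\ad(A_7^{(n)})$ which permutes the four simple objects $f^{(0)}, f^{(2)}, f^{(4)}, f^{(6)}$. Since an equivalence of $\Z{4}$-graded extensions is the identity on the trivially-graded piece and preserves the grading, it must intertwine these induced permutations. So if the two identifications $(\mathcal{E}_4^{(n)},+)$ and $(\mathcal{E}_4^{(n)},-)$ produced \emph{different} permutations of $\{f^{(0)},f^{(2)},f^{(4)},f^{(6)}\}$ under this construction, they could not be equivalent as graded extensions.

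The key steps, in order, would be: (i) Using the fusion rules computed in Appendix~\ref{app:rules}, read off the tensor product $\mathbf{5} \otimes \mathbf{2}' \otimes \mathbf{5}^*$ (and similarly for $\mathbf{3}'$) where $\mathbf{2}', \mathbf{3}'$ denote the simple objects of the $0$-graded piece other than $\mathbf{1}$ and $\mathbf{4}$; identify which of $\mathbf{2}', \mathbf{3}'$ this returns. Since $\mathbf{5}$ is not normal ($\mathbf{5}\otimes\mathbf{5}^* \not\cong \mathbf{5}^*\otimes\mathbf{5}$), this "conjugation" genuinely has the potential to be a non-trivial permutation, swapping $\mathbf{2}'$ and $\mathbf{3}'$. (ii) Recall that under the identification $+$ we have $\mathbf{2}'\mapsto f^{(2)}$, $\mathbf{3}'\mapsto f^{(4)}$, while under $-$ we have $\mathbf{2}'\mapsto f^{(4)}$, $\mathbf{3}'\mapsto f^{(2)}$. (iii) Observe that the induced auto-equivalence of $\ad(A_7^{(n)})$ is computed intrinsically in the monoidal category $\mathcal{E}_4^{(n)}$ and so is literally the same permutation of abstract simple objects in both cases; but transported through the two different labelings it becomes two different permutations of $\{f^{(0)},f^{(2)},f^{(4)},f^{(6)}\}$ — one of them fixing $f^{(2)}$ and $f^{(4)}$, the other swapping them (or, if $\mathbf{5}$-conjugation already swaps $\mathbf{2}'\leftrightarrow\mathbf{3}'$, the situation is reversed, but still exactly one of the two labelings makes it trivial on $\{f^{(2)},f^{(4)}\}$). (iv) Conclude that no equivalence of graded extensions can exist, since such an equivalence would have to be the identity on $\ad(A_7^{(n)})$ yet conjugate one permutation into the other.

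An alternative, perhaps cleaner, route avoids the asymmetry of left/right tensoring: one can instead look at the associativity constraint or, more simply, use that an equivalence of $\Z{4}$-graded extensions which is the identity on the $0$-graded piece is determined by its restriction to each graded component, and in particular must send the generating object $\mathbf{5}$ to itself up to the action of the invertibles; tracking how $\mathbf{5}$ then interacts with $f^{(2)}$ versus $f^{(4)}$ via the fusion rules gives the obstruction directly. Either way, the engine is the same: the fusion rules of $\mathcal{E}_4^{(n)}$ are symmetric under the swap $\mathbf{2}'\leftrightarrow\mathbf{3}'$ as abstract data, but the two embeddings of $\ad(A_7^{(n)})$ break this symmetry differently.

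The main obstacle will be step (i): one must extract from the fusion rules of Appendix~\ref{app:rules} the precise statement that conjugation by $\mathbf{5}$ induces a \emph{non-trivial} permutation of the pair $\{\mathbf{2}', \mathbf{3}'\}$ (equivalently, that the monoidal auto-equivalence $\ad(A_7^{(n)}) \to \ad(A_7^{(n)})$ coming from the $\Z{4}$-grading has order $4$, not $2$, as an outer symmetry interchanging the two "middle" objects). If that permutation were trivial, the argument collapses, so verifying its non-triviality from the explicit fusion data — or, equivalently, showing that the $\Z{4}$-action on $\ad(A_7^{(n)})$ factors through the non-trivial automorphism $f^{(2)}\leftrightarrow f^{(4)}$ — is the crux. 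This is a finite check once the fusion table is in hand, but it is the step on which everything rests.
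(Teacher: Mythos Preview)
Your proposed invariant does not distinguish the two extensions, so the argument has a genuine gap. The two identifications $+$ and $-$ differ by the automorphism $\tau\colon f^{(2)}\leftrightarrow f^{(4)}$ of $\ad(A_7^{(n)})$. Whatever ``conjugation'' permutation $\sigma$ of $\{\mathbf{1},\mathbf{2},\mathbf{3},\mathbf{4}\}$ you extract intrinsically from $\mathcal{E}_4^{(n)}$, transporting it through the labeling $-$ yields $\tau\circ\sigma_+\circ\tau^{-1}$, not something unrelated to $\sigma_+$. Since the only candidates for $\sigma_+$ here are the identity and $\tau$ itself, and $\tau$ is an involution commuting with itself, you get $\sigma_-=\sigma_+$ in either case. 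So step~(iii) is simply false: both labelings produce the \emph{same} permutation of $\{f^{(0)},f^{(2)},f^{(4)},f^{(6)}\}$, and no contradiction arises. (There is also a smaller issue: the map $Y\mapsto X\otimes Y\otimes X^*$ is not an auto-equivalence of $\ad(A_7^{(n)})$ for a non-invertible $X$, so one has to be careful about what ``conjugation'' means; but even granting the charitable interpretation via the bimodule twist, the invariant collapses as above.)

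The paper's argument avoids this trap by working with the whole category rather than just the trivially graded piece. An equivalence of $\Z{4}$-graded extensions from $(\mathcal{E}_4^{(n)},+)$ to $(\mathcal{E}_4^{(n)},-)$ must, as a monoidal auto-equivalence of $\mathcal{E}_4^{(n)}$, send $\mathbf{2}\mapsto\mathbf{3}$. One then computes \emph{all} fusion-ring automorphisms of $\mathcal{E}_4^{(n)}$ (Lemma~\ref{lem:E4auto}) and observes that every automorphism swapping $\mathbf{2}\leftrightarrow\mathbf{3}$ also swaps $\mathbf{7}\leftrightarrow\mathbf{12}$, hence interchanges the $1$- and $3$-graded components and induces the non-trivial automorphism $1\leftrightarrow 3$ of $\Z{4}$. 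This violates the requirement that an equivalence of extensions preserve the grading, and the lemma follows. The point is that the obstruction lives in how the putative equivalence acts on the \emph{nontrivial} graded pieces, not in any invariant computed purely inside $\ad(A_7^{(n)})$.
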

\begin{proof}
Recall that an equivalence of graded extensions is a monoidal equivalence that is the identity on the common trivial piece, and also preserves the grading group. An equivalence of extensions between $( \mathcal{E}_4^{(n)}, +)$ and $(\mathcal{E}_4^{(n)},-)$ would thus have to map $\mathbf{2} \leftrightarrow \mathbf{3}$. However Lemma~\ref{lem:E4auto} shows that any possible monoidal auto-equivalence of $\mathcal{E}_4^{(n)}$ that sends $\mathbf{2} \leftrightarrow \mathbf{3}$, always induces the group automorphism $1 \leftrightarrow 3$ on the grading group $\Z{4}$. Thus there can be no equivalence of graded extensions between $( \mathcal{E}_4^{(n)}, +)$ and $(\mathcal{E}_4^{(n)},-)$.
\end{proof}
\begin{cor}\label{lem:dobgencov}
When $M$ is odd, the category $\subcat{ \mathcal{E}^{(n)}_{4}\boxtimes \LVec(\Z{4M}})$ realises two distinct extensions of $\ad(A_7^{(n)})$.
\end{cor}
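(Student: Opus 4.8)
The plan is to imitate the proof of Corollary~\ref{lem:d2ndiff}. By Lemma~\ref{lem:a7ext} the category $\mathcal{E}_4^{(n)}$ is a $\Z4$-graded extension of $\ad(A_7^{(n)})$, and by the discussion preceding Lemma~\ref{lem:dobcov} it carries two such graded structures, $(\mathcal{E}_4^{(n)},+)$ and $(\mathcal{E}_4^{(n)},-)$. For either sign $\kappa$, forming the Deligne product with $\LVec(\Z{4M})$ and passing to $\subcat{\cdot}$ produces a $\Z{4M}$-graded extension $\subcat{(\mathcal{E}_4^{(n)},\kappa)\boxtimes\LVec(\Z{4M})}$ of $\ad(A_7^{(n)})$; the goal is to show that the two signs give inequivalent graded extensions.

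First I would identify, inside $\subcat{(\mathcal{E}_4^{(n)},\kappa)\boxtimes\LVec(\Z{4M})}$, the subcategory graded by the unique subgroup of order $4$ of the cyclic group $\Z{4M}$, namely $\langle M\rangle$. Since $M$ is odd, reduction modulo $4$ restricts to an isomorphism $\langle M\rangle\cong\Z4$; write $\iota\colon\Z4\to\Z{4M}$ for its inverse, so that $\iota$ is a group homomorphism with image $\langle M\rangle$ and $\iota(i)\equiv i\pmod4$ for all $i$. I would then check that $X_i\mapsto X_i\boxtimes\iota(i)$, defined on homogeneous objects $X_i$ of degree $i\in\Z4$ in $(\mathcal{E}_4^{(n)},\kappa)$, extends to a fully faithful monoidal functor
\[ (\mathcal{E}_4^{(n)},\kappa)\longrightarrow \subcat{(\mathcal{E}_4^{(n)},\kappa)\boxtimes\LVec(\Z{4M})}. \]
Indeed it is monoidal because $\iota$ is a homomorphism; its image lies in $\subcat{\cdot}$ because $\iota(i)\equiv i\pmod 4$ places $X_i\boxtimes\iota(i)$ in the diagonal $\Z{4M}$-graded pieces; it is fully faithful by a direct computation of $\Hom$-spaces in the Deligne product; it is the identity on the trivially graded piece; and it carries the $\Z4$-grading onto the $\langle M\rangle$-grading via $\iota$. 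Hence its image is precisely the $\langle M\rangle$-graded subcategory, and as a $\Z4$-graded extension of $\ad(A_7^{(n)})$ this subcategory is $(\mathcal{E}_4^{(n)},\kappa)$.

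With this in place the conclusion is formal. Any equivalence of $\Z{4M}$-graded extensions $\subcat{(\mathcal{E}_4^{(n)},+)\boxtimes\LVec(\Z{4M})}\simeq\subcat{(\mathcal{E}_4^{(n)},-)\boxtimes\LVec(\Z{4M})}$ sends degree-$g$ objects to degree-$g$ objects, hence preserves the subcategory graded by $\langle M\rangle$ and restricts there to an equivalence which is still the identity on $\ad(A_7^{(n)})$ and still respects the $\langle M\rangle$-grading. Transporting along the two embeddings of the previous step, this would yield an equivalence of $\Z4$-graded extensions $(\mathcal{E}_4^{(n)},+)\simeq(\mathcal{E}_4^{(n)},-)$, contradicting Lemma~\ref{lem:dobcov}. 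Therefore $\subcat{(\mathcal{E}_4^{(n)},+)\boxtimes\LVec(\Z{4M})}$ and $\subcat{(\mathcal{E}_4^{(n)},-)\boxtimes\LVec(\Z{4M})}$ are distinct $\Z{4M}$-graded extensions of $\ad(A_7^{(n)})$.

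The one step requiring care is the identification of the order-$4$ graded subcategory with $(\mathcal{E}_4^{(n)},\pm)$ itself, and this is exactly where the hypothesis that $M$ is odd enters, since that hypothesis is equivalent to reduction modulo $4$ being injective on $\langle M\rangle$. If $M$ were even this subcategory would instead be a graded extension built from a proper subcategory of $\mathcal{E}_4^{(n)}$ and Lemma~\ref{lem:dobcov} could not be applied directly. Everything else is bookkeeping with cyclic groups and Deligne products.
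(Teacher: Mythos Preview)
Your proof is correct and follows essentially the same approach as the paper: embed $(\mathcal{E}_4^{(n)},\pm)$ into $\subcat{\mathcal{E}_4^{(n)}\boxtimes\LVec(\Z{4M})}$ as the subcategory graded by the order-$4$ subgroup, and then invoke Lemma~\ref{lem:dobcov}. The paper's proof is extremely terse (it simply writes down a formula $X_i\mapsto X_i\boxtimes -mi$ for the embedding and says ``the result then follows from Lemma~\ref{lem:dobcov}''), while you spell out the restriction argument explicitly and identify exactly where the hypothesis that $M$ is odd is used; your formula $X_i\mapsto X_i\boxtimes\iota(i)$ is in fact the cleaner way to write the embedding uniformly for both residues of $M$ modulo $4$.
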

\begin{proof}
When $M$ is odd there exists a fully faithful functor of extensions
\[  \mathcal{E}_4^{(n)}\to  \subcat{ \mathcal{E}^{(n)}_{4}\boxtimes \LVec(\Z{4M}})\]
given by
\[  X_i \mapsto X_i\boxtimes -mi.    \]
The result then follows from Lemma~\ref{lem:dobcov}.
\end{proof}

As $\Inv(\cZ(\ad(A_7^{(n)}))) = \Z{2}$, we have that for each fixed homomorphism $\Z{M} \to \BrPic(\ad(A_7^{(n)}))$, there are two corresponding extensions, up to twisting the associator. As mentioned earlier, the group $\BrPic(\ad(A_7^{(n)}))$ contains an interesting $\Z{4}$ subgroup, and hence there are interesting homomorphisms $\Z{4M} \to  \BrPic(\ad(A_7^{(n)}))$. We have just shown that the categories $\subcat{ \mathcal{E}^{(n)}_{4}\boxtimes \LVec(\Z{4M}})$ realise these interesting homomorphisms. When $M$ is odd we show in Corollary~\ref{lem:dobgencov} that the category $\subcat{ \mathcal{E}^{(n)}_{4}\boxtimes \LVec(\Z{4M}})$ realises two such extensions corresponding to this homomorphism. However when $M$ is even, the category $\subcat{ \mathcal{E}^{(n)}_{4}\boxtimes \LVec(\Z{4M}})$ only realises one such extension. We now construct the other.
 
Appendix~\ref{app:rules} shows that the categories $\mathcal{E}^{(n)}$ contain an order 2 invertible object $\mathbf{4}$. Using \cite[Corollary 3.30]{MR3039775} we see that the subcategory generated by this object has a lift to a copy of $\sVec$ in the centre. Therefore by Lemma~\ref{lem:liftyourself} the subcategory $\langle \mathbf{4} \boxtimes 4M \rangle$ of $ \mathcal{E}_{4}^{(n)} \boxtimes \IVec(\Z{8M})$ has a lift to a copy of $\Rep(\Z{2})$ in the centre. Thus the subcategory $\langle \mathbf{4} \boxtimes 4M \rangle$ of $ \subcat{\mathcal{E}_{4}^{(n)} \boxtimes \IVec(\Z{8M})}$ also has a lift to a copy of $\Rep(\Z{2})$ in the centre. Hence we can de-equivariantize $\subcat{ \mathcal{E}_{4}^{(n)} \boxtimes \IVec(\Z{8M})}$ by the subcategory $\langle  \mathbf{4} \boxtimes 4M \rangle$, to get the fusion category
\[    \subcat{ \mathcal{E}_{4}^{(n)} \boxtimes \IVec(\Z{8M})}_{\langle \mathbf{4} \boxtimes 4M \rangle}.  \] 
We claim that this fusion category is a $\Z{4M}$-graded extension of $\ad(A_7^{(n)})$.
\begin{lemma}
The category $\subcat{ \mathcal{E}_{4}^{(n)} \boxtimes \IVec(\Z{8M})}_{\langle \mathbf{4} \boxtimes 4M \rangle}$ is a $\Z{4M}$-graded extension of $\ad(A_7^{(n)})$.
\end{lemma}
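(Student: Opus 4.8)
The plan is to run the same argument as in Lemma~\ref{lem:aOddExt}, transported through the free module functor of the de-equivariantization, and then pin down the trivially graded piece.

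First I would keep track of the gradings. By Lemma~\ref{lem:a7ext} the category $\mathcal{E}_4^{(n)}$ is a $\Z{4}$-graded extension of $\ad(A_7^{(n)})$, so $\mathcal{E}_4^{(n)}\boxtimes\IVec(\Z{8M})$ is $\Z{4}\times\Z{8M}$-graded; since $\operatorname{lcm}(4,8M)=8M$, the subcategory $\subcat{\mathcal{E}_4^{(n)}\boxtimes\IVec(\Z{8M})}$ generated by the $(1,1)$-graded piece is $\Z{8M}$-graded, with $j$-th homogeneous component $(\mathcal{E}_4^{(n)})_{j\bmod 4}\boxtimes\{j\}$; in particular its trivially graded piece is $\ad(A_7^{(n)})\boxtimes\{0\}\cong\ad(A_7^{(n)})$. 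The object $\mathbf{4}$ is the ``$f^{(6)}$'' object of this $\ad(A_7^{(n)})$ subcategory, hence lies in the trivial grade of $\mathcal{E}_4^{(n)}$, so $\mathbf{4}\boxtimes 4M$ lies in the $4M$-graded component of $\subcat{\mathcal{E}_4^{(n)}\boxtimes\IVec(\Z{8M})}$, and $\langle\mathbf{4}\boxtimes 4M\rangle$ is a copy of $\Rep(\Z{2})$ supported on the order-two subgroup $\{0,4M\}\le\Z{8M}$.

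Next I would descend to the de-equivariantization. The free module functor $F$ from $\subcat{\mathcal{E}_4^{(n)}\boxtimes\IVec(\Z{8M})}$ to $\subcat{\mathcal{E}_4^{(n)}\boxtimes\IVec(\Z{8M})}_{\langle\mathbf{4}\boxtimes 4M\rangle}$ is a surjective tensor functor which identifies two objects precisely when they differ by tensoring with $\mathbf{4}\boxtimes 4M$; since that twist changes the grade by $4M$, $F$ is compatible with the quotient and endows the de-equivariantization with a $\Z{8M}/\{0,4M\}\cong\Z{4M}$-grading. The trivial piece of this grading is $F\big(\ad(A_7^{(n)})\boxtimes\{0\}\ \oplus\ \ad(A_7^{(n)})\boxtimes\{4M\}\big)$; because $\ad(A_7^{(n)})\boxtimes\{0\}$ meets $\langle\mathbf{4}\boxtimes 4M\rangle$ only in the unit, the restriction of $F$ to it is a fully faithful tensor functor, and it is essentially surjective onto this trivial piece (every simple there is $F$ of an object of grade $0$). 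Hence the trivially graded piece is monoidally equivalent to $\ad(A_7^{(n)})$, which is the assertion. A route closer to Lemma~\ref{lem:aOddExt} is available as a sanity check: one verifies directly that $F(\mathbf{1}\boxtimes 0),\dots,F(\mathbf{4}\boxtimes 0)$ generate a subcategory with $\ad(A_7)$ fusion rules, identifies it with $\ad(A_7^{(m)})$ via Proposition~\ref{lem:adclass}, fixes $m=n$ by comparing categorical dimensions, and then forces equality of the trivial piece by a global Frobenius--Perron dimension count, noting $\FPdim$ of the de-equivariantization equals $\tfrac12\FPdim\subcat{\mathcal{E}_4^{(n)}\boxtimes\IVec(\Z{8M})}=4M\cdot\FPdim\ad(A_7^{(n)})$.

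The step I expect to be the main obstacle is the bookkeeping in the second paragraph: confirming that the grading descends to $\Z{4M}$ and no further, i.e.\ that $\langle\mathbf{4}\boxtimes 4M\rangle$ is supported on exactly the order-two subgroup $\{0,4M\}$, and that $F$ does not identify objects sitting in distinct homogeneous components of the quotient. Both points reduce to the observation that $\mathbf{4}\boxtimes 4M$ has a single well-defined grade $4M$, of order two in $\Z{8M}$, together with the fact that de-equivariantization by a pointed subcategory only merges objects within one coset of its support.
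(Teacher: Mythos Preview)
Your proposal is correct. The paper takes exactly the route you describe as a ``sanity check'': it writes down the four free modules $(\mathbf{1}\boxtimes 0)\oplus(\mathbf{4}\boxtimes 4M)$, $(\mathbf{2}\boxtimes 0)\oplus(\mathbf{3}\boxtimes 4M)$, $(\mathbf{3}\boxtimes 0)\oplus(\mathbf{2}\boxtimes 4M)$, $(\mathbf{4}\boxtimes 0)\oplus(\mathbf{1}\boxtimes 4M)$, checks by direct computation that they have $\ad(A_7)$ fusion rules and generate the adjoint subcategory, applies Proposition~\ref{lem:adclass} to identify this as $\ad(A_7^{(m)})$, and fixes $m=n$ via categorical dimensions. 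Your primary argument---tracking the $\Z{8M}$-grading through the de-equivariantization and observing that $F$ restricted to grade~$0$ is fully faithful because $\langle\mathbf{4}\boxtimes 4M\rangle$ meets grade~$0$ only in the unit---is a cleaner conceptual alternative that bypasses the fusion-rule check; it also makes explicit why the resulting grading group is $\Z{4M}$ and why each graded piece is nonempty, points the paper leaves implicit. Both approaches land on the same identification of the trivially graded piece.
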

\begin{proof}
A direct computation shows that the four objects 
\[  (\mathbf{1}\boxtimes 0) \oplus (\mathbf{4}\boxtimes 4M) , (\mathbf{2}\boxtimes 0) \oplus (\mathbf{3}\boxtimes 4M) , (\mathbf{3}\boxtimes 0) \oplus (\mathbf{2}\boxtimes 4M), \text{ and } (\mathbf{4}\boxtimes 0) \oplus (\mathbf{1}\boxtimes 4M)\]
have $\ad(A_7)$ fusion rules, and generate the adjoint subcategory of $\subcat{ \mathcal{E}_{4}^{(n)} \boxtimes \IVec(\Z{8M})}_{\langle \mathbf{4} \boxtimes 4M \rangle}$. Thus by Lemma~\ref{lem:adclass} we have a monoidal equivalence
\[   \ad\left( \subcat{ \mathcal{E}_{4}^{(n)} \boxtimes \IVec(\Z{8M})}_{\langle \mathbf{4} \boxtimes 4M \rangle}\right) \to \ad(A_7^{(m)}) \]
for some $m\in \Z{8}^\times / \{\pm\}$. Considering categorical dimensions shows that $m = n$.
\end{proof}

We now need to show that when $M$ is even, the categories $\subcat{ \mathcal{E}_{4}^{(n)} \boxtimes \IVec(\Z{8M})}_{\langle \mathbf{4} \boxtimes 4M \rangle}$ and $\subcat{ \mathcal{E}_{4}^{(n)} \boxtimes \LVec(\Z{4M})}$ are non-equivalent, even up to twisting the associator. 

\begin{lemma}\label{lem:e4diff}
The categories $\subcat{ \mathcal{E}_{4}^{(n)} \boxtimes \IVec(\Z{16M})}_{\langle \mathbf{4} \boxtimes 8M \rangle}$ and $\subcat{ \mathcal{E}_{4}^{(n)} \boxtimes \LVec(\Z{8M})}$ are monoidally inequivalent, even up to twisting the associator.
\end{lemma}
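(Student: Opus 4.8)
The strategy mirrors the distinguishing arguments used in the previous cases (Lemmas~\ref{lem:aOddDiff}, \ref{lem:d4plusdif}, \ref{lem:E6Diff}): find a fusion-ring invariant that separates the two categories. The most promising invariant is the group of invertible objects. First I would compute the group of invertibles of $\subcat{ \mathcal{E}_{4}^{(n)} \boxtimes \LVec(\Z{8M})}$. Since $\mathcal{E}_4^{(n)}$ has invertible objects exactly $\{\mathbf{1},\mathbf{4}\}\cong\Z{2}$ (from Appendix~\ref{app:rules}), and the $\subcat{-}$ construction selects the subcategory generated by the $\Z{4}\times\Z{8M}$-degree-$(1,1)$ objects, the invertibles of $\subcat{ \mathcal{E}_{4}^{(n)} \boxtimes \LVec(\Z{8M})}$ form a group of order $2\cdot 8M = 16M$, and a short computation (tracking that $\mathbf{4}$ sits in degree $2\in\Z{4}$ while the generator $1\in\Z{8M}$ sits in degree $1$) shows this group is isomorphic to $\Z{2}\times\Z{8M}$, as the element $\mathbf{4}\boxtimes 4M$ has order $2$ and is not a square.

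Next I would compute the group of invertibles of $\subcat{ \mathcal{E}_{4}^{(n)} \boxtimes \IVec(\Z{16M})}_{\langle \mathbf{4} \boxtimes 8M \rangle}$. This category also has $16M$ invertible objects. Using the free module functor $\subcat{ \mathcal{E}_{4}^{(n)} \boxtimes \IVec(\Z{16M})} \to \subcat{ \mathcal{E}_{4}^{(n)} \boxtimes \IVec(\Z{16M})}_{\langle \mathbf{4} \boxtimes 8M \rangle}$, as in the proof of Lemma~\ref{lem:aOddDiff}, I would exhibit a single invertible object — the natural candidate being the image of $\mathbf{4}\boxtimes 2$, or more precisely $(\mathbf{1}\boxtimes 2)\oplus(\mathbf{4}\boxtimes 8M+2)$ after identifying via the de-equivariantization — and show by direct computation that it has order $16M$. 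This forces the group of invertibles to be cyclic, isomorphic to $\Z{16M}$. Since $M$ need not make $\Z{2}\times\Z{8M}$ cyclic — indeed $\Z{2}\times\Z{8M}$ is never cyclic because $8M$ is even — the two groups are non-isomorphic for all $M\geq 1$, and hence the two categories have different fusion rules, so are not monoidally equivalent even up to twisting the associator.

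The main obstacle will be the bookkeeping of the de-equivariantization: one must correctly identify which formal combinations $(\mathbf{a}\boxtimes i)\oplus(\mathbf{4a}\boxtimes i+8M)$ become simple invertible objects in the quotient, and verify the order of the chosen generator by carefully computing tensor powers through the free module functor, keeping track of the $\IVec$ associator (which affects which half-braiding is used but not the fusion rules themselves). A secondary subtlety is confirming that the $\Z{2}$-factor in $\Z{2}\times\Z{8M}$ really does not merge into the cyclic part — i.e.\ that $\mathbf{4}\boxtimes 4M$ is not a tensor square of an invertible — which follows from parity considerations on the $\Z{4}$-grading (a square would have even $\Z{4}$-degree coming from an object of degree summing correctly, but $\mathbf{4}$ has degree $2$ and the generator degree $1$, so the relevant element is not hit). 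Once both invertible groups are pinned down, the conclusion is immediate, exactly as in the even-$M$ case of Lemma~\ref{lem:aOddDiff}.
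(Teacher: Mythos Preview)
Your approach---comparing the groups of invertible objects---is exactly what the paper does, and the final conclusion (one group is cyclic, the other is not) is correct. However, your counts are off by a factor of $4$ throughout, and this stems from a genuine misunderstanding of the $\subcat{\,\cdot\,}$ construction.

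The invertibles of $\mathcal{E}_4^{(n)}$ are $\{\mathbf{1},\mathbf{4}\}$, both of which live in degree $0\in\Z{4}$ (not degree $2$ as you suggest for $\mathbf{4}$; check the appendix). The subcategory $\subcat{\mathcal{E}_4^{(n)}\boxtimes\Vec(\Z{8M})}$ consists only of those objects whose bi-degree lies in the cyclic subgroup of $\Z{4}\times\Z{8M}$ generated by $(1,1)$. An invertible $\mathbf{1}\boxtimes k$ or $\mathbf{4}\boxtimes k$ has bi-degree $(0,k)$, which lies in $\langle(1,1)\rangle$ only when $k\equiv 0\pmod 4$. Hence there are $2\cdot 2M=4M$ invertibles, forming the group $\Z{2}\times\Z{2M}$---this is precisely what the paper asserts. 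The same restriction applies in the $\Z{16M}$ case before de-equivariantizing, so after quotienting by $\langle\mathbf{4}\boxtimes 8M\rangle$ you again get $4M$ invertibles, and the paper identifies this group as $\Z{4M}$. Your proposed generator $(\mathbf{1}\boxtimes 2)\oplus(\mathbf{4}\boxtimes 8M{+}2)$ does not even lie in $\subcat{\mathcal{E}_4^{(n)}\boxtimes\IVec(\Z{16M})}$, since $(0,2)\notin\langle(1,1)\rangle$; the correct candidate is $(\mathbf{1}\boxtimes 4)\oplus(\mathbf{4}\boxtimes 8M{+}4)$, which does have order $4M$. With these corrections the argument goes through exactly as you outline, since $\Z{2}\times\Z{2M}$ is never cyclic and $\Z{4M}$ is.
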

\begin{proof}
The invertible objects of the category $\subcat{ \mathcal{E}_{4}^{(n)} \boxtimes \LVec(\Z{8M})}$ form a group isomorphic to $\Z{2}\times \Z{2M}$, where as the invertible elements of $\subcat{ \mathcal{E}_{4}^{(n)} \boxtimes \IVec(\Z{16M})}_{\langle \mathbf{4} \boxtimes 8M \rangle}$ form a group isomorphic to $\Z{4M}$. These two groups are non-isomorphic, hence the fusion rings of the categories 
\[ \subcat{ \mathcal{E}_{4}^{(n)} \boxtimes \IVec(\Z{16M})}_{\langle \mathbf{4} \boxtimes 8M \rangle}\text{ and } \subcat{ \mathcal{E}_{4}^{(n)} \boxtimes \LVec(\Z{8M})}\]
 are different. Thus these two categories are monoidally inequivalent, even up to twisting the associator.
\end{proof}

Putting everything together we can now prove the classification result of this subsection.
\begin{lemma}\label{lem:a7}
Fix $n\in \Z{8}^\times / \{\pm \}$. If $\cC$ is a $\Z{M}$-graded extension of $\ad(A_{7}^{(n)})$, $\otimes$-generated by an object of Frobenius-Perron dimension less than 2, then either $M$ is even and, up to twisting the associator of $\cC$ by an element of $H^3(\Z{M}, \mathbb{C}^\times)$, the category $\cC$ is monoidally equivalent to either 
\[
\subcat{ A_7^{(n)} \boxtimes \LVec(\Z{M}) }, \text{ or }
\]
\[ \subcat{ A_7^{(n)} \boxtimes \IVec(\Z{2M}) }_{\langle f^{(6)}\boxtimes M \rangle},  \]
or $4$ divides $M$ and, up to twisting the associator of $\cC$ by an element of $H^3(\Z{M}, \mathbb{C}^\times)$, the category $\cC$ is monoidally equivalent to
\[   \subcat{\mathcal{E}^{(n)}_{4} \boxtimes \LVec(\Z{M})},\] 
or $8$ divides $M$ and, up to twisting the associator of $\cC$ by an element of $H^3(\Z{M}, \mathbb{C}^\times)$, the category $\cC$ is monoidally equivalent to
\[   \subcat{\mathcal{E}^{(n)}_{4} \boxtimes \IVec(\Z{2M})}_{ \langle \mathbf{4} \boxtimes M \rangle}.\]
\end{lemma}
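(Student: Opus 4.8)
\emph{Strategy.} The proof follows the four–step pattern of this section: classify the homomorphisms $\Z{M}\to\BrPic(\ad(A_7^{(n)}))$ that can support an extension $\otimes$–generated by an object of Frobenius–Perron dimension less than $2$; for each, bound the number of such extensions up to twisting the associator by an element of $H^3(\Z{M},\mathbb{C}^\times)$ by $\lvert H^2(\Z{M},\Inv(\cZ(\ad(A_7^{(n)}))))\rvert$; construct extensions realising the bound; and separate them, refining from graded to monoidal equivalence using \cite{1711.00645}.

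\emph{Homomorphisms.} From \cite{MR3808052}, $\BrPic(\ad(A_7^{(n)}))\cong D_{2\cdot 4}$, and $\ad(A_7^{(n)})$ has a single nontrivial monoidal auto–equivalence $\mathcal{F}$, namely $f^{(2)}\leftrightarrow f^{(4)}$, giving the order–$2$ bimodule ${}_{\mathcal{F}}A_7^{\mathrm{even}}$. By \cite[Theorem 3.1]{1711.00645} it suffices to work up to post–composition by the inner automorphism this bimodule induces, i.e.\ up to conjugation by the corresponding reflection of $D_{2\cdot 4}$. Consulting Subsection~\ref{sub:dims}, a bimodule can hold a $\otimes$–generator of such an extension only if it has a non–pointed object of dimension $<2$ whose square can contain an object of dimension $2\cos(\pi/8)$ (needed to recover $\ad(A_7^{(n)})$); after the reduction of \cite{1711.00645} this leaves the homomorphism $1\mapsto A_7^{\mathrm{odd}}$, whose image is the central $\Z{2}$ and which forces $M$ even, and $1\mapsto{}_{\mathcal{F}}A_7^{\mathrm{odd}}$, whose image is the cyclic $\Z{4}\leq D_{2\cdot 4}$ and which forces $4\mid M$ (its two generators being exchanged by the reflection), while the $D_5$–bimodules produce only categories already on the list below. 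If $M$ is odd the only homomorphism is trivial, whose extension is pointed, so no such $\cC$ exists.

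\emph{Counting, constructing and separating.} By Table~\ref{tab:Inv}, $\Inv(\cZ(\ad(A_7^{(n)})))\cong\Z{2}$ with trivial bimodule action, so $H^2(\Z{M},\Z{2})$ is $\Z{2}$ for $M$ even and trivial otherwise. For $1\mapsto A_7^{\mathrm{odd}}$ the action of ${}_{\mathcal{F}}A_7^{\mathrm{even}}$ on this $\Z{2}$ is trivial, giving two extensions up to twisting: $\subcat{A_7^{(n)}\boxtimes\LVec(\Z{M})}$ (trivial cocycle) and $\subcat{A_7^{(n)}\boxtimes\IVec(\Z{2M})}_{\langle f^{(6)}\boxtimes M\rangle}$ (nontrivial), which are $\Z{M}$–graded extensions of $\ad(A_7^{(n)})$ by Lemma~\ref{lem:aOddExt} (applied with $2N+1=7$) and have different fusion rules by Lemma~\ref{lem:aOddDiff}, hence are monoidally inequivalent even up to twisting. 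For the $\Z{4}$–homomorphism, $\mathcal{E}_4^{(n)}$ is a $\Z{4}$–graded extension of $\ad(A_7^{(n)})$ by Lemma~\ref{lem:a7ext}; taking $\subcat{\cdot}$ of the Deligne product with $\LVec(\Z{M})$, and of the Deligne product with $\IVec(\Z{2M})$ followed by de–equivariantising by $\langle\mathbf{4}\boxtimes M\rangle$ (whose lift to $\Rep(\Z{2})$ comes from Lemma~\ref{lem:liftyourself} together with the fact — Appendix~\ref{app:rules}, \cite[Corollary 3.30]{MR3039775} — that $\langle\mathbf{4}\rangle$ lifts to $\sVec$), produces $\Z{M}$–graded extensions realising this homomorphism. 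The action of ${}_{\mathcal{F}}A_7^{\mathrm{even}}$ on $H^2(\Z{M},\Z{2})$ identifies the two cocycle classes exactly when $8\nmid M$, so $\subcat{\mathcal{E}_4^{(n)}\boxtimes\LVec(\Z{M})}$ is the unique extension when $4\mid M$, $8\nmid M$, while for $8\mid M$ it and $\subcat{\mathcal{E}_4^{(n)}\boxtimes\IVec(\Z{2M})}_{\langle\mathbf{4}\boxtimes M\rangle}$ are both present — and by Lemma~\ref{lem:e4diff} these are monoidally inequivalent even up to twisting, since their groups of invertible objects are $\Z{2}\times\Z{M/4}$ and $\Z{M/2}$, which are non–isomorphic precisely when $8\mid M$. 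Finally $\subcat{\mathcal{E}_4^{(n)}\boxtimes\LVec(\Z{M})}$ realises the $\Z{4}$–homomorphism, so its bimodule decomposition over $\ad(A_7^{(n)})$ differs from those of the two $A_7^{\mathrm{odd}}$–families, making it inequivalent to them; this exhausts the list.

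\emph{Main obstacle.} The hard part is running the bookkeeping of \cite{MR2677836} and \cite{1711.00645} correctly through $\BrPic\cong D_{2\cdot 4}$: identifying the central element and the order–$4$ pair among the eight bimodules of Table~\ref{table:bim}; checking that the $D_5^{\mathrm{odd}}$–homomorphism — dimension–compatible precisely because $N=3$ is not excluded here — yields only the category $\subcat{A_7^{(n)}\boxtimes\IVec(\Z{2M})}_{\langle f^{(6)}\boxtimes M\rangle}$ already obtained, so that the a priori count for $M$ even collapses to two monoidal classes; and, most delicately, computing the action of ${}_{\mathcal{F}}A_7^{\mathrm{even}}$ on the relevant $H^2$ groups, which is what produces the asymmetric conditions ``$4\mid M$'' versus ``$8\mid M$''. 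One must also fold in that the single monoidal category $\mathcal{E}_4^{(n)}$ supports two inequivalent $\Z{4}$–graded extension structures $(\mathcal{E}_4^{(n)},\pm)$ — Lemma~\ref{lem:dobcov}, via the auto–equivalence computation of Lemma~\ref{lem:E4auto}, with Corollary~\ref{lem:dobgencov} — to see that $\subcat{\mathcal{E}_4^{(n)}\boxtimes\LVec(\Z{M})}$ already accounts for both extensions attached to the $\Z{4}$–homomorphism when $M/4$ is odd.
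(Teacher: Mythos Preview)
Your overall strategy and the lemmas you cite match the paper's proof closely, and the case $1\mapsto A_7^{\mathrm{odd}}$ is handled exactly as the paper does it. There is, however, a genuine gap in the $\Z{4}$-homomorphism case.

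You write that ``the action of ${}_{\mathcal{F}}A_7^{\mathrm{even}}$ on $H^2(\Z{M},\Z{2})$ identifies the two cocycle classes exactly when $8\nmid M$''. This cannot work: the action of $\BrPic$ on $H^2$ is induced from its action on $\Inv(\cZ(\ad(A_7^{(n)})))\cong\Z{2}$, and every group action on $\Z{2}$ is trivial, so the induced action on $H^2$ is trivial and collapses nothing. (Separately, the hypotheses under which \cite[Theorem~3.1]{1711.00645} lets one quotient $H^2$ by $\TenAut$, as invoked in the $D_4$ proof, include $\BrPic$ abelian and $|\TenAut|$ coprime to $|H^2|$; both fail here.) The paper's actual mechanism for the collapse when $4\mid M$ but $8\nmid M$ is precisely the one you relegate to your ``Main obstacle'' paragraph: Corollary~\ref{lem:dobgencov} shows that when $M/4$ is odd the single monoidal category $\subcat{\mathcal{E}_4^{(n)}\boxtimes\LVec(\Z{M})}$ already carries two \emph{inequivalent} extension structures over $\ad(A_7^{(n)})$, and hence accounts for both $H^2$ classes. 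This is the argument, not a supplement to it; promote it and drop the $H^2$-action claim.

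A smaller point: your assertion that $A_7^{\mathrm{odd}}$ generates ``the central $\Z{2}$'' of $D_{2\cdot 4}$ is inconsistent with the reduction you are invoking. The paper lists four admissible homomorphisms --- $1\mapsto A_7^{\mathrm{odd}}$, $1\mapsto D_5^{\mathrm{odd}}$, $1\mapsto{}_{\mathcal{F}}A_7^{\mathrm{odd}}$, $1\mapsto{}_{\mathcal{F}}D_5^{\mathrm{odd}}$ --- and then applies \cite{1711.00645} to cut to two by conjugating with ${}_{\mathcal{F}}A_7^{\mathrm{even}}$; for that to identify $1\mapsto A_7^{\mathrm{odd}}$ with $1\mapsto D_5^{\mathrm{odd}}$, the element $A_7^{\mathrm{odd}}$ cannot be central. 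There is no need to locate the centre; just list the four and reduce.
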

\begin{proof}
As usual, we begin by classifying homomorphisms $\Z{M} \to \BrPic(\ad(A_7^{(n)}))$ that may give rise to an extension, $\otimes$-generated by an object of Frobenius-Perron dimension less than 2. We find that there are four families of homomorphisms to consider. When $M$ is even we have the two homomorphisms defined by
\begin{align*}
1 &\mapsto A_7^\text{odd} ,\text{ and } \\
1 & \mapsto D_5^\text{odd}.
\end{align*}
When $M$ is divisible by 4, we have the two homomorphisms defined by
\begin{align*}
1 &\mapsto _{f^{(2)} \leftrightarrow f^{(4)}}A_7^\text{odd} ,\text{ and } \\
1 & \mapsto _{f^{(2)} \leftrightarrow f^{(4)}}D_5^\text{odd}.
\end{align*}
Furthermore, an application of \cite{1711.00645} shows that in order to get a representative of each monoidal equivalence class of the extensions, we in fact only have to consider the two homomorphisms
\begin{align*}
1 &\mapsto A_7^\text{odd} ,\text{ and } \\
1 &\mapsto _{f^{(2)} \leftrightarrow f^{(4)}}A_7^\text{odd}.
\end{align*}
We thus break the proof up in to two cases.

\begin{trivlist}\leftskip=2em
\item \textbf{Case $1 \mapsto A_7^\text{odd}:$}

With this choice of homomorphism we compute that
\[ H^2(\Z{M} , \Inv(\cZ(\ad(A_7^{(n)})))) = \Z{2}.\]
Thus, up to twisting the associator, there are two $\Z{M}$-graded extensions of $\ad(A_7^{(n)})$ corresponding to this choice of homomorphism. These two extensions are realised by the categories
\[ \subcat{ A_7^{(n)} \boxtimes \LVec(\Z{M}) } \text{ and } \subcat{ A_7^{(n)} \boxtimes \IVec(\Z{2M}) }_{\langle f^{(6)}\boxtimes M \rangle}. \]
These categories are $\Z{M}$-graded extensions of $\ad(A_7^{(n)})$ by Lemma~\ref{lem:aOddExt}, and they are monoidally distinct, even up to twisting the associator, by Lemma~\ref{lem:aOddDiff}.

\vspace{1em}

\item \textbf{Case $1 \mapsto _{f^{(2)} \leftrightarrow f^{(4)}}A_7^\text{odd}$:}

With this choice of homomorphism we also compute that
\[ H^2(\Z{M} , \Inv(\cZ(\ad(A_7^{(n)})))) = \Z{2}.\] 
Thus, up to twisting the associator, there are two $\Z{M}$-graded extensions of $\ad(A_7^{(n)})$ corresponding to this choice of homomorphism. When $M$ is not divisible by 8, Lemma~\ref{lem:a7ext} along with Corollary~\ref{lem:dobgencov} imply that both these extensions are realised by the single category
\[  \subcat{\mathcal{E}^{(n)}_{4} \boxtimes \LVec(\Z{M})}.   \]
When $M$ is divisible by 8, Lemmas~\ref{lem:a7ext} and \ref{lem:e4diff} imply that these two extensions are realised by the categories
\[ \subcat{\mathcal{E}^{(n)}_{4} \boxtimes \LVec(\Z{M})} \text{ and } \subcat{\mathcal{E}^{(n)}_{4} \boxtimes \IVec(\Z{2M})}_{ \langle \mathbf{4} \boxtimes M \rangle}.   \] 
\end{trivlist}
\end{proof}
\section{A proof of the main classification result}\label{sec:mainproof}

In this section we tie the previous results of this paper together to give a proof of Theorem~\ref{thm:main}.

\begin{proof}[Proof of Theorem~\ref{thm:main}]
Let $\cC$ be a fusion category, $\otimes$-generated by an object $X$ of Frobenius-Perron dimension less than 2, such that $\ad(\cC) = \langle X\otimes X^* \rangle$. Then by Theorem~\ref{thm:ADEext} the category $\cC$ is monoidally equivalent to a $\Z{M}$-graded extension of one of the following fusion categories:
\begin{align*}
\ad(A_N^{(n)}) & \text{ for } n\in \Z{N+1}^\times / \{\pm\}, \\
\ad(D_{2N}^{(n)}) & \text{ for } n\in \Z{4N-3}^\times / \{\pm\}, \\
\ad(E_6^{(n,\pm)}) & \text{ for } n\in \Z{12}^\times / \{\pm\}, \\
\ad(E_8^{(n,\pm)}) & \text{ for } n\in \Z{30}^\times/ \{\pm\}.
\end{align*}
The $\Z{M}$-graded extensions of these categories are classified, up to twisting the associator by an element of $H^3(\Z{M},\mathbb{C}^\times)$, in Lemmas~\ref{lem:aeven},~\ref{lem:aodd},~\ref{lem:a3},~\ref{lem:a7},~\ref{lem:d2n},~\ref{lem:d4},~\ref{lem:d10},~\ref{lem:e6}, and ~\ref{lem:e8}. Thus $\cC$ is monoidally equivalent, up to twisting the associator, to one of the categories in the statements of these Lemmas. Several changes of variables gets us to the statement of Theorem~\ref{thm:main}.
\end{proof}

\appendix
\section{Fusion Rules for the categories $\mathcal{E}^{(n)}_4$ and $\mathcal{E}^{(n,\pm)}_{16,6}$ }\label{app:rules}
In this appendix we compute the fusion rules for the categories $\mathcal{E}^{(n)}_4$ and $\mathcal{E}^{(n,\pm)}_{16,6}$. We begin with the $\mathcal{E}^{(n,\pm)}_{16,6}$ case, which is conceptually easier, but computationally harder.

Recall from Lemma~\ref{lem:D10ext} that the categories $\mathcal{E}^{(n,\pm)}_{16,6}$ are $\Z{6}$-graded extensions of $\ad(D_{10}^{(n)})$. The $3$-graded piece under this grading is either the bimodule $D_{10}^{\text{odd}}$ or $_{P\leftrightarrow Q}D_{10}^{\text{odd}}$. Thus $\mathcal{E}^{(n,\pm)}_{16,6}$ has a $\Z{2}$-graded subcategory equivalent to $D_{10}^{(\pm n , \pm)}$, and $\mathcal{E}^{n,m,\pm}_{16,6}$ is thus a $\Z{3}$-graded extension of $D_{10}^{(\pm n , \pm)}$.

The Brauer-Picard group of $D_{10}^{(\pm n , \pm)}$ was computed in \cite{MR3808052}, and the only order 3 bimodules are $_{P\leftrightarrow Q}E_7$ and $_{P\leftrightarrow Q}\overline{E}_7$. Thus, as a $\Z{3}$-graded extension of $D_{10}^{(\pm n , \pm)}$, the 1 and 2 graded pieces consist of these two bimodules. This gives us fusion rules for tensoring a $D_{10}$ object with either a $_{P\leftrightarrow Q}E_7$ object, or a $_{P\leftrightarrow Q}\overline{E_7}$ object.

We now aim to determine how two $_{P\leftrightarrow Q}E_7$ objects tensor. Due to the grading, the tensor of two such object must live in the $_{P\leftrightarrow Q}\overline{E_7}$ piece. Let $X$ and $Y$ be $_{P\leftrightarrow Q}E_7$  objects, and $Z$ a $D_{10}$ object, then associativity of the fusion rules gives us that 
\[    (X \otimes Z)\otimes Y \cong X \otimes (Z\otimes Y).\]
Along with the fact that tensor product must preserve Frobenius-Perron dimensions, this allows us to completely determine the fusion rules for tensoring two objects in the $1$-graded piece.

By considering Frobenius-Perron dimensions, and the grading of the category, we can completely determine the dual of each object. Each $D_{10}$ object is self-dual, and each $_{P\leftrightarrow Q}E_7$ is dual to the unique object in the $_{P\leftrightarrow Q}\overline{E_7}$ piece with the same Frobenius-Perron dimension. 

Finally, an application of Frobenius reciprocity gives fusion rules for the entire category. The full fusion rules for the categories $\mathcal{E}^{(n,\pm)}_{16,6}$ can be found at the authors website \href{http://cainedie.com/E166fusion.txt}{http://cainedie.com/E166fusion.txt}. To save space we only present in Figure~\ref{fig:Spaceship} the fusion graph for the $\otimes$-generating object of Frobenius-Perron dimension $2\cos(\frac{\pi}{18})$, living in the $1$-graded piece. 
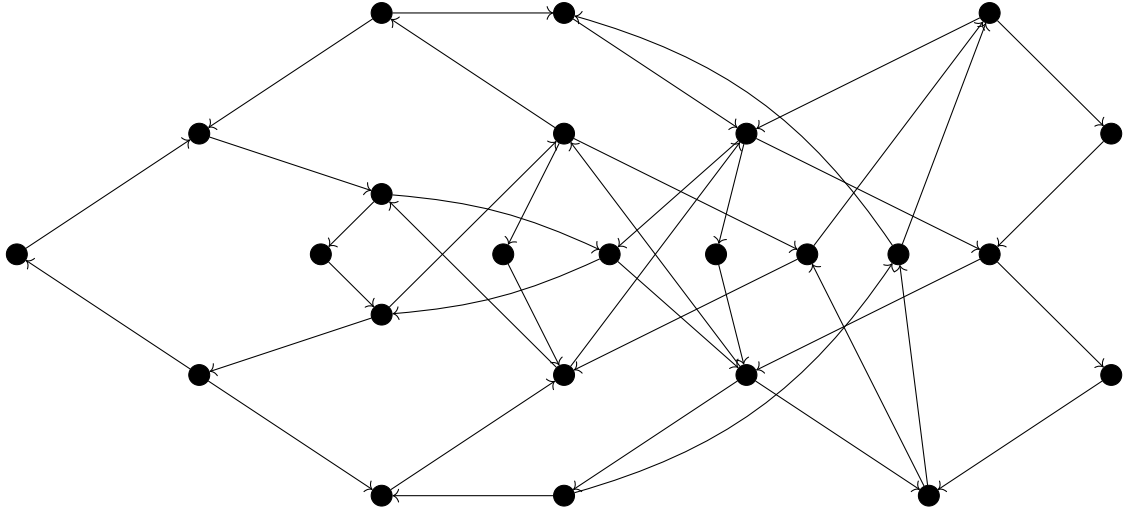
\begin{figure}
\centering
\begin{center}\begin{tikzpicture}[scale = .8]
    \node[shape=circle,draw=black, fill = black, scale = .707] (1) at (0,0) {};
    \node[shape=circle,draw=black, fill = black, scale = .707] (11) at (3,2) {};
    \node[shape=circle,draw=black, fill = black, scale = .707] (18) at (3,-2) {};
    \node[shape=circle,draw=black, fill = black, scale = .707] (10) at (6,4) {};
    \node[shape=circle,draw=black, fill = black, scale = .707] (9) at (6,-4) {};
    \node[shape=circle,draw=black, fill = black, scale = .707] (21) at (6,1) {} ;
    \node[shape=circle,draw=black, fill = black, scale = .707] (14) at (6,-1) {} ;
 \node[shape=circle,draw=black , fill = black, scale = .707] (2) at (5,0) {} ;
 \node[shape=circle,draw=black, fill = black, scale = .707] (8) at (9.75,0) {} ;
 \node[shape=circle,draw=black, fill = black, scale = .707] (13) at (9,4) {} ;
 \node[shape=circle,draw=black, fill = black, scale = .707] (23) at (9,2) {} ;
 \node[shape=circle,draw=black, fill = black, scale = .707] (16) at (9,-2) {} ;
 \node[shape=circle,draw=black, fill = black, scale = .707] (20) at (9,-4) {} ;
 \node[shape=circle,draw=black, fill = black, scale = .707] (3) at (8,0) {} ;
 \node[shape=circle,draw=black, fill = black, scale = .707] (24) at (12,2) {} ;
 \node[shape=circle,draw=black, fill = black, scale = .707] (17) at (12,-2) {} ;
\node[shape=circle,draw=black, fill = black, scale = .707] (7) at (13,0) {} ;
\node[shape=circle,draw=black, fill = black, scale = .707] (4) at (11.5,0) {} ;
\node[shape=circle,draw=black, fill = black, scale = .707] (5) at (14.5,0) {} ;
\node[shape=circle,draw=black, fill = black, scale = .707] (6) at (16,0) {} ;
\node[shape=circle,draw=black, fill = black, scale = .707] (15) at (16,4) {} ;
\node[shape=circle,draw=black, fill = black, scale = .707] (22) at (15,-4) {} ;
\node[shape=circle,draw=black, fill = black, scale = .707] (19) at (18,2) {} ;
\node[shape=circle,draw=black, fill = black, scale = .707] (12) at (18,-2) {} ;

    \path [->] (1) edge node {$$} (11);
    \path [->](18) edge node {$$} (1);
\path [->](11) edge node {} (21);
\path [->](10) edge node {} (11);
\path [->](18) edge node {} (9);
\path [->](14) edge node {} (18);
\path [->](21) edge node {} (2);
\path [->](2) edge node {} (14);
\path [->](21) edge[bend left = 10] node {} (8);
\path [->](8) edge[bend left = 10] node {} (14);
\path [->](14) edge node {} (23);
\path [->](16) edge node {} (21);
\path [->](9) edge node {} (16);
\path [->](20) edge node {} (9);
\path [->](23) edge node {} (10);
\path [->](10) edge node {} (13);   
\path [->](23) edge node {} (3);
\path [->](3) edge node {} (16);
\path [->](13) edge node {} (24);
\path [->](8) edge node {} (17);
\path [->](24) edge node {} (8);
\path [->](5) edge[bend right = 20] node {} (13);
\path [->](20) edge[bend right = 20] node {} (5);
\path [->](17) edge node {} (20);
\path [->](23) edge node {} (7);
\path [->](17) edge node {} (23);
\path [->](16) edge node {} (24);
\path [->](7) edge node {} (16);
\path [->](24) edge node {} (4);
\path [->](4) edge node {} (17);
\path [->](15) edge node {} (24);
\path [->](24) edge node {} (6);
\path [->](17) edge node {} (22);
\path [->](6) edge node {} (17);
\path [->](5) edge node {} (15);
\path [->](7) edge node {} (15);
\path [->](22) edge node {} (5);
\path [->](22) edge node {} (7);
\path [->](15) edge node {} (19);
\path [->](12) edge node {} (22);
\path [->](6) edge node {} (12);
\path [->](19) edge node {} (6);

\end{tikzpicture}\end{center}
\caption{Fusion graph for the $\otimes$-generating object of $\mathcal{E}^{(n,\pm)}_{16,6}$ of Frobenius-Perron dimension $2\cos(\frac{\pi}{18})$} \label{fig:Spaceship}
\end{figure}

We now compute the $\mathcal{E}^{(n)}_4$ case. Recall the category $\mathcal{E}^{(n)}_4$ is a $\Z{4}$-graded extension of a category of adjoint $A_7$ type, with the $1$, $2$, and $3$ graded pieces being the bimodules $_{f^{(2)} \leftrightarrow f^{(4)} }A_7^\text{odd}$, $D_5^\text{even}$, and $_{f^{(2)} \leftrightarrow f^{(4)} }D_5^\text{even}$ respectively. 

From \cite{Su4} we know the Frobneius-Perron dimensions of the 12 simple objects of $\mathcal{E}^{(n)}_4$. They are
\[  \left\{1,\sqrt{2}+1,\sqrt{2}+1,1,\sqrt{\sqrt{2}+2},\sqrt{\sqrt{2}+2},\sqrt{2 \left(\sqrt{2}+2\right)},\sqrt{2},\sqrt{2}+2,\sqrt{\sqrt{2}+2},\sqrt{\sqrt{2}+2},\sqrt{2 \left(\sqrt{2}+2\right)}\right\}. \]
With the first 4 objects living in the 0-graded piece, the next 3 objects living in the 1 graded piece, the next 2 objects living in the $2$ graded piece, and the final 3 objects living in the $3$-graded piece. For simplicity we call the 12 objects of this category the numbers $\mathbf{1}$ through $\mathbf{12}$.

Simply by considering Frobenius-Perron dimensions, along with the associativity check from the $\mathcal{E}_{16,6}$ case, allows us to completely determine fusion rules for all objects apart from fusion between the objects $5,6$ and $10,11$. Here we get four possible fusion rules:
\begin{center}
    \begin{tabular}{l | c | c | c | c }
			Fusion Rule & $\mathbf{5}\otimes \mathbf{10}$ & $\mathbf{5}\otimes \mathbf{11}$ & $\mathbf{6}\otimes \mathbf{10}$ & $\mathbf{6}\otimes \mathbf{11}$ \\
			\midrule
	                  1                      & $ \mathbf{1} \oplus \mathbf{2} $ & $ \mathbf{3} \oplus \mathbf{4}$   &   $\mathbf{3} \oplus \mathbf{4} $ &  $\mathbf{1} \oplus \mathbf{2}$ \\
	                   2                     & $ \mathbf{1}\oplus \mathbf{3} $ & $ \mathbf{2} \oplus \mathbf{4}$    &   $\mathbf{2} \oplus \mathbf{4} $ &  $\mathbf{1} \oplus \mathbf{3}$  \\			
	                   3                     & $ \mathbf{3} \oplus \mathbf{4} $ & $ \mathbf{1} \oplus \mathbf{2}$   &   $\mathbf{1} \oplus \mathbf{2} $ &  $\mathbf{3} \oplus \mathbf{4}$   \\
	                   3                     & $ \mathbf{2} \oplus \mathbf{4} $ & $ \mathbf{1} \oplus \mathbf{3}$   &   $\mathbf{1} \oplus \mathbf{3} $ &  $\mathbf{2} \oplus \mathbf{4}$  \\
    	\bottomrule
    \end{tabular}
\end{center}
However a direct computation shows each of these four fusion rules are isomorphic. Thus we can completely determine the fusion rules for the categories $\mathcal{E}^{(n)}_4$. Again we only present the fusion graph for the generating object $\mathbf{5}$ of Frobenius-Perron dimension $\sqrt{2+\sqrt{2}}$ in Figure~\ref{fig:Teepee}. Full fusion rules can be found at the authors website \href{http://cainedie.com/E4fusion.txt}{http://cainedie.com/E4fusion.txt}.

\begin{figure}
\centering
\begin{center}\begin{tikzpicture}[scale = .6]
    \node[shape=circle,draw=black, fill = black, scale = .707] (1) at (0,0) {};
    \node[shape=circle,draw=black, fill = black, scale = .707] (5) at (4,0) {};

    \node[shape=circle,draw=black, fill = black, scale = .707] (9) at (8,6) {};

  \node[shape=circle,draw=black, fill = black, scale = .707] (6) at (12,0) {};
    \node[shape=circle,draw=black, fill = black, scale = .707] (4) at (16,0) {};

\node[shape=circle,draw=black, fill = black, scale = .707] (10) at (2,3) {};
\node[shape=circle,draw=black, fill = black, scale = .707] (11) at (14,3) {};

\node[shape=circle,draw=black, fill = black, scale = .707] (2) at (4,6) {};
\node[shape=circle,draw=black, fill = black, scale = .707] (3) at (12,6) {};

\node[shape=circle,draw=black, fill = black, scale = .707] (7) at (6,9) {};
\node[shape=circle,draw=black, fill = black, scale = .707] (12) at (10,9) {};

\node[shape=circle,draw=black, fill = black, scale = .707] (8) at (8,12) {};

    \path [->] (1) edge node {$$} (5);
    \path [->](5) edge node {$$} (9);
 \path [->](6) edge node {$$} (9);
 \path [->](4) edge node {$$} (6);
 \path [->](10) edge node {$$} (1);
 \path [->](11) edge node {$$} (4);
 \path [->](9) edge node {$$} (10);
 \path [->](9) edge node {$$} (11);
 \path [->](10) edge node {$$} (2);
 \path [->](2) edge node {$$} (6);
\path [->](3) edge node {$$} (5);
 \path [->](11) edge node {$$} (3);
 \path [->](2) edge node {$$} (7);
 \path [->](7) edge node {$$} (8);
 \path [->](12) edge node {$$} (3);
 \path [->](8) edge node {$$} (12);
 \path [->](7) edge node {$$} (9);
 \path [->](3) edge node {$$} (7);
 \path [->](12) edge node {$$} (2);
 \path [->](9) edge node {$$} (12);
\end{tikzpicture}\end{center}

\caption{Fusion graph of the object $\mathbf{5} \in \mathcal{E}^{(n)}_{4}$} \label{fig:Teepee}
\end{figure}
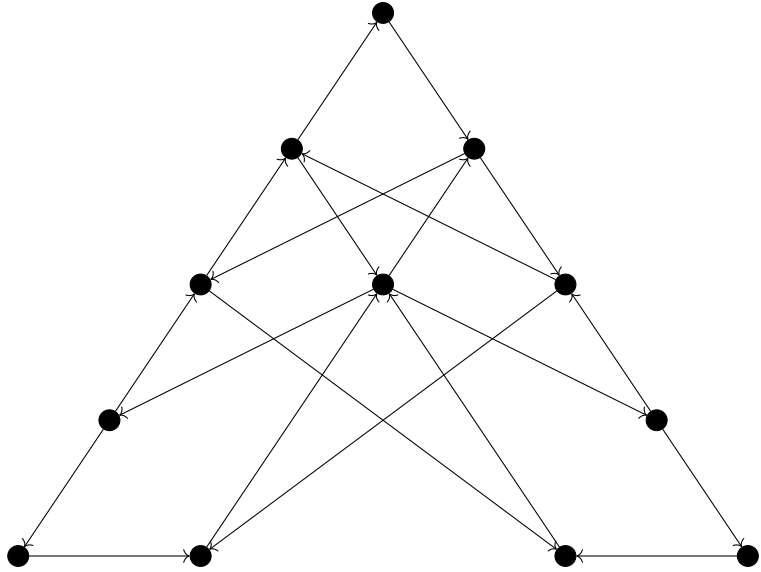

Using the fusion rules of $\mathcal{E}_4$, we can directly compute the fusion ring automorphisms.
\begin{lemma}\label{lem:E4auto}
There exist three non-trivial fusion ring automorphisms of $\mathcal{E}^{(n)}_4$ given by
\[ 1)   \mathbf{5} \leftrightarrow \mathbf{6} ,  \mathbf{10} \leftrightarrow \mathbf{11}\]
\[ 2)   \mathbf{2} \leftrightarrow \mathbf{3} ,  \mathbf{5} \leftrightarrow \mathbf{10} ,  \mathbf{6} \leftrightarrow \mathbf{11},  \mathbf{7} \leftrightarrow \mathbf{12}   \]
and
\[ 3)   \mathbf{2} \leftrightarrow \mathbf{3} ,  \mathbf{5} \leftrightarrow \mathbf{11} ,  \mathbf{6} \leftrightarrow \mathbf{10}, \mathbf{7} \leftrightarrow \mathbf{12}   \]
\end{lemma}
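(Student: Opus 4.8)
The plan is to turn this into a finite search, constrained by three invariants that every fusion ring automorphism of $\mathcal{E}_4^{(n)}$ must respect: Frobenius-Perron dimension, the universal grading, and duality (the last being automatic for a unital ring homomorphism). First I would record the $\FPdim$-classes of the twelve simples, read off from the list in Appendix~\ref{app:rules}: $\mathbf{1}$ (the unit) and $\mathbf{4}$ are the only simples of dimension $1$; $\{\mathbf{2},\mathbf{3}\}$ is the set of simples of dimension $1+\sqrt2$; $\{\mathbf{5},\mathbf{6},\mathbf{10},\mathbf{11}\}$ the set of simples of dimension $\sqrt{2+\sqrt2}$; $\{\mathbf{7},\mathbf{12}\}$ the set of simples of dimension $\sqrt{2(2+\sqrt2)}$; and $\mathbf{8}$, $\mathbf{9}$ are uniquely determined by their dimensions $\sqrt2$ and $2+\sqrt2$. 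Hence any fusion ring automorphism fixes $\mathbf{1},\mathbf{4},\mathbf{8},\mathbf{9}$ and preserves each of the three remaining sets setwise.

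Second I would invoke the universal grading. Since $\ad(\ad(A_7^{(n)})) = \ad(A_7^{(n)})$, the $\Z{4}$-grading on $\mathcal{E}_4^{(n)}$ established in Lemma~\ref{lem:a7ext} is the universal grading, with graded pieces $\{\mathbf{1},\mathbf{2},\mathbf{3},\mathbf{4}\}$, $\{\mathbf{5},\mathbf{6},\mathbf{7}\}$, $\{\mathbf{8},\mathbf{9}\}$, $\{\mathbf{10},\mathbf{11},\mathbf{12}\}$ in degrees $0,1,2,3$. Every fusion ring automorphism therefore induces an automorphism of $\Z{4}$, namely the identity or the inversion $1\leftrightarrow 3$. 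Combined with the dimension constraints, this leaves only a short list of candidate permutations: in the identity case the automorphism can act only by independently swapping some of $\mathbf{2}\leftrightarrow\mathbf{3}$, $\mathbf{5}\leftrightarrow\mathbf{6}$, $\mathbf{10}\leftrightarrow\mathbf{11}$ (eight candidates), and in the inversion case it must send $\mathbf{7}\leftrightarrow\mathbf{12}$ and $\{\mathbf{5},\mathbf{6}\}$ bijectively onto $\{\mathbf{10},\mathbf{11}\}$, while possibly swapping $\mathbf{2}\leftrightarrow\mathbf{3}$ (four candidates) — twelve candidates in all, including the identity.

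Finally I would test these twelve candidates against the explicit fusion rules of $\mathcal{E}_4^{(n)}$ from Appendix~\ref{app:rules}. Because $\mathbf{5}$ $\otimes$-generates the category, it suffices to check that a candidate permutation is compatible with left and right multiplication by $\mathbf{5}$; the three permutations in the statement pass this test, and their compositions close up with $\mathrm{auto}_1\circ\mathrm{auto}_2=\mathrm{auto}_3$, so the full automorphism group is $\Z{2}\times\Z{2}$. For each of the remaining nine candidates I would exhibit one violated structure constant: a bare swap of $\mathbf{2}\leftrightarrow\mathbf{3}$, of $\mathbf{5}\leftrightarrow\mathbf{6}$, or of $\mathbf{10}\leftrightarrow\mathbf{11}$, and indeed any identity-on-grading candidate other than $\mathrm{id}$ and $(\mathbf5\,\mathbf6)(\mathbf{10}\,\mathbf{11})$, is already contradicted by the tabulated products $\mathbf{5}\otimes\mathbf{10},\ \mathbf{5}\otimes\mathbf{11},\ \mathbf{6}\otimes\mathbf{10},\ \mathbf{6}\otimes\mathbf{11}$ in the appendix, which pin down exactly how the pair $\{\mathbf{2},\mathbf{3}\}$ is correlated with $\{\mathbf{5},\mathbf{6}\}$ and $\{\mathbf{10},\mathbf{11}\}$; in the inversion case the two candidates lacking the $\mathbf{2}\leftrightarrow\mathbf{3}$ swap are eliminated by the same products together with the associativity relation $(\mathbf{5}\otimes\mathbf{10})\otimes\mathbf{5}=\mathbf{5}\otimes(\mathbf{10}\otimes\mathbf{5})$. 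The only genuine content is reading off the relevant entries of the twelve-object multiplication table; the grading and dimension invariants have already cut the problem to a handful of checks, so the main — and entirely routine — obstacle is simply this bookkeeping.
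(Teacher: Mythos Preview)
Your approach is correct and matches the paper's: the paper gives no proof beyond the sentence ``Using the fusion rules of $\mathcal{E}_4$, we can directly compute the fusion ring automorphisms,'' so your structured reduction via $\FPdim$, the universal $\Z{4}$-grading, and duality simply fills in the bookkeeping the paper leaves implicit. One small tightening: in the identity-on-grading case, duality already links the swaps $\mathbf{5}\leftrightarrow\mathbf{6}$ and $\mathbf{10}\leftrightarrow\mathbf{11}$ (since $\mathbf{5}^*\in\{\mathbf{10},\mathbf{11}\}$), cutting your eight candidates to four before any fusion checks.
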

While this Lemma may seem out of place, it proves important in showing that the categories $\mathcal{E}_4^{(n)}$ realises two different $\Z{4}$-graded extensions of the category $\ad(A_7^{(n)})$.
\bibliography{bibliography} 
\bibliographystyle{plain}
\end{document}